\documentclass[12pt]{amsart}
\usepackage{graphicx}
\usepackage{amssymb}
\usepackage{amsmath}
\usepackage{amscd}
\usepackage{upgreek}
\usepackage{mathrsfs}
\usepackage{stmaryrd}
\usepackage{longtable}
\usepackage{txfonts}
\usepackage{extarrows}
\usepackage{titletoc}
\usepackage[T1]{fontenc}
\usepackage{eucal}
\usepackage{arydshln}
\usepackage{wrapfig}
\usepackage{float}
\usepackage{xypic}
\usepackage{dsfont}
\usepackage{xcolor}
\usepackage{color}
\usepackage[colorlinks,linkcolor=blue,anchorcolor=blue,
citecolor=blue]{hyperref}
\usepackage{hyperref}
\DeclareGraphicsRule{.jpg}{eps}{.bb}{}
\usepackage[hmarginratio=1:1,top=32mm,columnsep=20pt]{geometry}
\usepackage{amssymb,color}
\definecolor{MyGreen}{RGB}{29,162,55}
\DeclareMathOperator{\Hom}{Hom}	
\allowdisplaybreaks		
\usepackage{mathptmx}
\DeclareMathAlphabet{\mathcal}{OMS}{cmsy}{m}{n}
\DeclareSymbolFont{largesymbols}{OMX}{cmex}{m}{n}

%%%%%%%%%%%%%%%%%%%%%%%%%%%%%%%%%%%%%%%%%%%%%%%%%%%%%
\newtheorem{theorem}{Theorem}[section]
\newtheorem{prop}[theorem]{Proposition}
\newtheorem{defn}[theorem]{Definition}
\newtheorem{lem}[theorem]{Lemma}
\newtheorem{coro}[theorem]{Corollary}
\newtheorem{rem}[theorem]{Remark}

\newtheorem{exam}[theorem]{Example}
\newtheorem{thm}[theorem]{Theorem}

\newtheorem{notation}[theorem]{Notation}

%\newtheorem{exe}[theorem]{\it Exercise}

%%%%%%%%%%%%%%%%%%%%%%%%%%%%%%%%%%%%%%%%%%%%%%%%%%%%%%
\textheight 23cm
\textwidth 16.2cm
\topmargin -1.0cm

%%%%%%%%%%%%%%%%%%%%%%%%%%%%%%%%%%%%%%%%%%%%%%%%%%%%%%%%%%
 % the field of complex numbers
 % the set of nonnegative integers
\newcommand{\Z}{\mathbb{Z}} % the ring of integers
\newcommand{\F}{\mathbb{F}} % a field
 % a Lie algebra

\newcommand{\ra}{\longrightarrow}

\newcommand{\depth}{{\rm depth}}
\newcommand{\grade}{{\rm grade}}
\newcommand{\height}{{\rm height}}
\newcommand{\Tor}{{\rm Tor}}

\newcommand{\GL}{{\rm GL}} % the general linear groups
 % the special linear groups
 % the upper unitriangular groups
 % the upper triangular  groups
\newcommand{\Tr}{{\rm Tr}}

\newcommand{\HH}{\mathcal{H}}

\newcommand{\M}{\mathcal{M}}
\newcommand{\Sp}{{\rm Sp}}

\newcommand{\rightquot}{/\!\!/}

\newcommand{\maxpara}{\mathcal{G}}

%%%%%%%%%%%%%%%%%%%%%%%%%%%%%%%%%%%%%%%%%%%%%%%%%%%%%%%%

%%%%%%%%%%%%%%%%%%%%%%%%%%%%%%%%%%%%%%%%%%%%%%%%%%%%%%%%

%%%%%%%%%%%%%%%%%%%%%%%%%%%%%%%%%%%LET'S BEGIN HERE
\begin{document}
\setlength{\oddsidemargin}{0cm}
\setlength{\evensidemargin}{0cm}

%%%%%%%%%%%%%%%%%%%%%%%%%%%%%%%%%%% TITLE, AUTHOR AND ADDRESS
\title{\scshape Modular invariants of finite gluing groups}

\author{\scshape Yin Chen}
\address{Department of Mathematics, Nantong University, Nantong 226019, P.R. China}
\email{ychen@ntu.edu.cn}

\author{\scshape R. James Shank}
\address{School of Mathematics, Statistics and Actuarial Science, University of Kent\\
\hfil\break\indent Canterbury, CT2 7FS, United Kingdom}
\email{R.J.Shank@kent.ac.uk}

\author{David L. Wehlau}
\address{Department of Mathematics and Computer Science, Royal Military College\\
\hfil\break\indent Kingston, ON, K7K 5L0, Canada}
\email{wehlau@rmc.ca}

\date{\today}
\def\shorttitle{Modular invariants of finite gluing groups}

%%%%%%%%%%%%%%%%%%%%%%%%%%%%%%%%%%%ABSTRACT, MSC AND KEYWORDS
\begin{abstract}
We use the gluing construction introduced by Jia Huang 
to explore the rings of invariants for a range of modular representations.
We construct generating sets for the rings of invariants of the maximal parabolic subgroups
of a finite symplectic group and their common Sylow $p$-subgroup. 
We also investigate the invariants of singular finite classical groups.
We introduce parabolic gluing and use this construction to compute the invariant field of fractions for a range of representations. 
We use thin gluing to construct faithful representations of semidirect products 
and to determine the minimum dimension of a faithful representation of the semidirect product of a cyclic $p$-group acting 
on an elementary abelian $p$-group. 
\end{abstract}

\subjclass[2010]{13A50.}
\keywords{Modular invariants; gluing groups}

\maketitle
\baselineskip=16pt

%%%%%%%%%%%%%%%%%%%%%%%%%%%%%%%%%%%%%CONTENTS
%\textcolor{blue}{\tableofcontents{}}
\dottedcontents{section}[1.16cm]{}{1.8em}{5pt}
\dottedcontents{subsection}[2.00cm]{}{2.7em}{5pt}
\dottedcontents{subsubsection}[2.86cm]{}{3.4em}{5pt}

%%%%%%%%%%%%%%%%%%%%%%%%%%%%%%%%%%%%%%%%%%%SECTION 1

\section{Introduction}

In this paper we use the gluing construction introduced by Jia Huang \cite{Huang2011}
to explore the rings of invariants for a range of modular representations.
The gluing construction was motivated, in part, by the work of Hewett \cite{Hew1996},
Kuhn and Mitchell \cite{KM1986}, and Mui \cite{Mui1975} on parabolic subgroups of a finite general linear group.
In Section~\ref{max_sec}, we use gluing methods to construct generating sets for the rings of invariants of the maximal parabolic subgroups
of a finite symplectic group and their common Sylow $p$-subgroup. Our work in that section relies on the results of Carlisle and Kropholler on the invariants
of a finite symplectic group \cite[\S 8]{Ben1993}.
We also use the gluing construction to investigate the invariants of singular finite classical groups (Section~\ref{class_groups}). In Section~\ref{para_sec}, we introduce parabolic gluing and use this construction to compute the invariant field of fractions for a range of representations. We use thin gluing to construct faithful representations of semidirect products (Theorem \ref{thin_glue})
and to determine the minimum dimension of a faithful representation of the semidirect product of a cyclic $p$-group acting on an elementary abelian $p$-group (Corollary~\ref{min_dim}).

Suppose $V$ is a finite dimensional representation of a group $G$ over a field $\F$. We view $V$ as a left module over the group ring $\F G$.
There is a natural right action of $G$ on the dual $V^*=\hom_{\F}(V,\F)$: for $\phi\in V^*$, $g\in G$, and $v\in V$,
$(\phi\cdot g)(v)=\phi(g\cdot v)$. We use $\F[V]$ to denote the symmetric algebra on $V^*$.
The action of $G$ on $V^*$ extends to an action by degree preserving $\F$-algebra automorphisms on $\F[V]$.
The \emph{ring of invariants} of the representation is the subalgebra 
$\F[V]^G:=\{f\in\F[V]\mid f\cdot g=f , \, \forall g\in G\}$.
The elements of $\F[V]$ represent polynomial functions on $V$ and the elements of $\F[V]^G$ represent polynomial functions on the orbits $V/G$.
If $G$ is finite and $\F$ is algebraically closed, then $\F[V]^G$ is the ring of regular functions on the categorical quotient $V\rightquot G$.
For background material on the invariant theory of finite groups,
see  \cite{Ben1993}, \cite{CW2011}, \cite{DK2002},  \cite{FS2016} and \cite{NS2002}.
For background material on modular representation theory we suggest \cite{Alperin1986}.
We occasionally make reference to Steenrod operations; see \cite[\S 8]{NS2002} for the definition in the context of invariant theory.

In Section \ref{glue_constr} we introduce the gluing construction and define polynomial gluings, split gluings and thin gluings. 
In Section~\ref{tensor_sec} we summarise the relevant properties of tensor products of algebras. 
In Section~\ref{trans_sec} we compute the image of the transfer for a polynomial gluing in terms of the image of the transfer of the factors of the gluing.
Section~\ref{max_sec} is devoted to the maximal parabolic subgroups of a finite symplectic group and 
Section~\ref{class_groups} deals with finite singular classical groups. 
In Section~\ref{para_sec} we introduce parabolic gluing and compute the invariant field of  fractions for a range of representations.
We conclude with Section~\ref{diag_sec}, which introduces diagonal gluing and explores a number of examples.

\section{The Gluing Construction}\label{glue_constr}

Let $W_1$ and $W_2$ denote representations over a field $\F$ of groups $G_1$ and $G_2$ respectively.
%For $i\in\{1,2\}$, we view $W_i$ as a left module over the group ring $\F G_i$.
The vector space of linear transformations $\Hom_{\F}(W_2,W_1)$ is a left $\F G_1$/right $\F G_2$ bimodule:
for $g_1\in G_1$, $\varphi\in\Hom_{\F}(W_2,W_1)$, $g_2\in G_2$ and $v\in W_2$ we have
$(g_1\cdot\varphi\cdot g_2)(v)=g_1\cdot(\varphi(g_2\cdot v))$.  
Using the unique unital ring homomorphism from $\Z$ to $\F$, every $\F G_i$-module is also a $\Z G_i$-module.
Let $\M$ denote  a left $\Z G_1$/right $\Z G_2$ sub-bimodule of $\Hom_{\F}(W_2,W_1)$.
We use $G_1\times_{\M} G_2$ to denote the semidirect product whose elements consist of triples
$(g_1,\varphi,g_2)\in G_1\times \M \times G_2$ with the product given by
$(g_1,\varphi,g_2)\cdot(g'_1,\varphi',g'_2)=(g_1g_1',g_1\varphi'+\varphi g_2',g_2g_2')$.
We refer to $G_1\times_{\M} G_2$  as the {\it gluing of $G_1$ to $G_2$ through $\M$}.
Note that, to perform this construction, we need $\M$ to be closed with respect to addition and with 
respect to the group actions. There is a natural action of $G_1\times_{\M} G_2$ on $V:=W_1\oplus W_2$
given by $(g_1,\varphi,g_2)(w_1\oplus w_2)=(g_1w_1+\varphi(w_2))\oplus g_2w_2$.
If $W_1$ and $W_2$ are faithful, then $V$ is a faithful  representation of $G_1\times_{\M} G_2$.
If we choose bases for $W_1$ and $W_2$ and denote the resulting matrices by $[g_1]$, $[\varphi]$ and $[g_2]$,
then the associated matrix group is given by
$$
\left\{\begin{pmatrix}
   [g_1]   &   [\varphi] \\
     0 &  [g_{2}]
\end{pmatrix}\in\GL_{m+n}(\F)\Big |~g_{1}\in G_{1},g_{2}\in G_{2},\varphi\in \M\right\}$$
where $m=\dim(W_1)$ and $n=\dim(W_2)$.
If $W_1$ and $W_2$ are both faithful, then the matrix group is isomorphic to $G_1\times_{\M} G_2$ .

Since $\M$ is a normal subgroup of $G_1\times_{\M} G_2$ with quotient isomorphic to $G_1\times G_2$,
we can compute the ring of invariants $\F[V]^{G_1\times_{\M} G_2}$ by first computing the invariants under the action
of $\M$ and then computing the invariants under the action of $G_1\times G_2$, in other words,
$\F[V]^{G_1\times_{\M} G_2}=(\F[V]^{\M})^{G_1\times G_2}$. We will routinely identify $\F[W_2]$ with the subalgebra
$\F\otimes\F[W_2]\subset \F[W_1]\otimes \F[W_2]=\F[W_1\oplus W_2]=\F[V]$. Using this identification, we observe that 
$\F[W_2]\subset \F[V]^{\M}$.
%Let $I$ denote the ideal in $\F[V]^{\M}$ generated by $W_2^*$ and define $\bar{A}:=\F[V]^{\M}/I$. 
%Let $A$ denote the subalgebra of $\F[V]$ given by $\F\oplus (\F[V]^{\M}\cap (\F[W_1]_+\otimes\F[W_2]))$,
%where $\F[W_1]_+$ is the augmentation ideal consisting of the direct sum of the
%positive degree homogeneous components of $\F[W_1]$.
%Note that there is an action of $G_1\times G_2$ on $\bar{A}$.
We will say that the gluing is {\it split} if there exists a subalgebra $A\subset\F[V]^{\M}$
such that
\begin{itemize}
\item $\F[V]^{\M}=A \otimes \F[W_2]$,
\item $A$ is an $\F(G_1\times G_2)$ submodule of  $\F[V]^{\M}$ and
\item $G_2$ acts trivially on $A$.
\end{itemize}
When we write $\F[V]^{\M}=A \otimes \F[W_2]$ above, we mean that there is an algebra isomorphism
from $A \otimes \F[W_2]$ to  $\F[V]^{\M}$ which restricts to the inclusion on each of the factors.
If the gluing is split, then
$\F[V]^{G_1\times_{\M} G_2}=A^{G_1}\otimes\F[W_2]^{G_2}$. 

We are primarily interested in rings of invariants for finite groups. If $\M$ is non-zero and $\F$ has characteristic zero, 
then $\M$ is an infinite group. For the remainder of the paper,
 we assume the characteristic of $\F$ is a prime number $p$ and that
$\M$ is finite. This means that $\M$ is an elementary abelian $p$-group and a vector space over the prime field $\F_p$.
Recent work on the rings of invariants of modular representation of elementary abelian $p$-groups includes \cite{CSW2013}, \cite{P2018} and \cite{PS2016}.

We will use $\{y_1,\ldots,y_m\}$ to denote a chosen basis for $W_1^*$ and 
$\{x_1,\ldots,x_n\}$ to denote a chosen basis for $W_2^*$. Observe that $\F[x_1,\ldots,x_n]\subset \F[V]^{\M}$.
We can use orbit products to construct additional $\M$-invariants: define
$N_{\M}(y_j):=\prod\{y_j\cdot g\mid g\in\M \}$. The set 
$$\HH:=\{x_1,\ldots,x_n,N_{\M}(y_1),\ldots, N_{\M}(y_m)\}$$ is a homogeneous system of parameters 
for $\F[V]^{\M}$ (see, for example, \cite[Example 6]{FS2016}).  
If $\HH$ is a generating set for $\F[V]^{\M}$, we say that the chosen basis is a {\it Nakajima basis} for the $\F\M$-module $V$.
Recall that $\mathcal{H}$ is a generating set for $\F[V]^{\mathcal{M}}$ if and only if the product of the degrees of the elements of $\mathcal{H}$
is equal to the order of $\mathcal{M}$ (see, for example, \cite[3.7.5]{DK2002}).
The polynomials $N_{\M}(y_j)$ have a number of special properties. The $\M$-orbit of $y_j$ is of the form
$y_j+U_j$ where $U_j$ is an $\F_p$-subspace of $W_2^*={\rm Span}_{\F}\{x_1,\ldots,x_n\}$. Thus
$$N_{\M}(y_j)=\prod_{u\in U_j} (y_j+u)=y_j^{p^{\ell_j}}+\sum_{k=0}^{\ell_j-1} d_{k-\ell_j,j} y^{p^k} $$
where $\ell_j=\dim_{\F_p}(U_j)$ and the $d_{i,j}$ are  Dickson polynomials associated to $U_j$ 
(see \cite{Wil1983} or \cite{Dic1911} ).
Therefore $N_{\M}(y_j)$ is additive as a function of $y_j$ and is invariant under any subgroup of $\GL(W_2)$ which stabilises $U_j$.

\begin{exam} \label{Fqexam}{\rm
Suppose $\F=\F_q$, where $q=p^r$ for some $r$, and take $\M=\Hom_{\F_q}(W_2,W_1)$.
Then, for all $j$, we have  $U_j=W_2^*$  and $\deg(N_{\M}(y_j))=q^n$.
Therefore, since the order of $\M$ is $q^{mn}$, we have
$\F_q[V]^{\M}=\F_q[x_1,\ldots,x_n,N_{\M}(y_1),\ldots, N_{\M}(y_m)]$.
Hence any basis consistent with the decomposition $V^*=W_1^*\oplus W_2^*$ is Nakajima.
Furthermore, any subgroup of $\GL(W_2)$ stabilises $W_2^*$. Thus $G_2$ acts trivially on
$A=\F_q[N_{\M}(y_1),\ldots, N_{\M}(y_m)]$ and so the gluing is split giving 
$\F_q[V]^{G_1\times_{\M}G_2}=A^{G_1}\otimes\F_q[W_1]^{G_2}$.
The algebra homomorphism from $\F_q[W_1]$ to $A$ which takes $y_j$ to $N_{\M}(y_j)$ is
a $G_1$-equivariant isomorphism which takes elements of degree $d$ to elements of degree $d\cdot q^n$.
This means that  $\F_q[V]^{G_1\times_{\M}G_2}$ is isomorphic to $\F_q[W_1]^{G_1}\otimes \F_q[W_2]^{G_2}$
by an isomorphism which is not degree preserving but does restrict to an isomorphism of the augmentation ideals.
}\end{exam}

\begin{defn}
We say that a gluing is {\bf polynomial} if there is a $(G_1\times G_2)$-equivariant isomorphism of $\F$-algebras
from $\F[W_1\oplus W_2] $ to $\F[V]^{\M}$. 
\end{defn}
In the case of a polynomial gluing,
$\F[V]^{G_1\times_{\M} G_2}$ is isomorphic to $\F[W_1]^{G_1}\otimes \F[W_2]^{G_2}$.
In most of the polynomial gluings we consider, the $\F$-algebra isomorphism $\psi: \F[W_1\oplus W_2] \to \F[V]^{\M}$
is an extension of the inclusion of $\F[W_2]$ into $\F[V]^{\M}$ and restricts to an isomorphism of
$\F[W_1]$ to a subalgebra $A\subset \F[V]^{\M}$;
in this case, the gluing is split and the induced map from $\F[W_1]$ to $A$  is $G_1$-equivariant.
Example~\ref{Fqexam} provides a canonical example of a family of split polynomial gluings.

\subsection*{Jia Huang's Gluing Lemma}
Jia Huang's Gluing Lemma \cite[\S 2]{Huang2011} is an extension of Example~\ref{Fqexam}.
We reformulate his result in our notation. 
Suppose $q=p^r$, $\F_q\subseteq\F$ and $M_1$ is an $\F_qG_1$-submodule of $W_1$.
Further suppose that $M_2$ is an $\F_qG_2$-submodule of $W_2$ with
$\F\cdot M_2=W_2$ and $\dim_{\F_q}(M_2)=\dim_{\F}(W_2)=n$. This means that every
$\F_q$-basis for $M_2$ is an $\F$-basis for $W_2$. Therefore, every element of
$\Hom_{\F_q}(M_2,M_1)$ extends uniquely to an element of $\Hom_{\F}(W_2,W_1)$,
giving an isomorphism from $\Hom_{\F_q}(M_2,M_1)$ to a left $\F_qG_1$/ right $\F_qG_2$ sub-bimodule
of $\Hom_{\F}(W_2,W_1)$;  take $\mathcal M$ to be the image of this map. 
The gluing $G_1\times_{\mathcal M} G_2$ is a split polynomial gluing.
To see this, first observe that $U_j=M_2^*$ for each $j$, which means that $N_{\mathcal M}(y_j)$
has degree $q^n$ and, as in Example~\ref{Fqexam}, 
$\F[V]^{\mathcal M}=\F[x_1,\ldots,x_n,N_{\mathcal M}(y_1),\ldots, N_{\mathcal M}(y_m)]$.
Then observe that, since any subgroup of $\GL(M_2)$ stabilises $M_2^*$,
$G_2$ acts trivially on $A=\F_q[N_{\M}(y_1),\ldots, N_{\M}(y_m)]$ .

%Reference \cite{H1996} and \cite{KM1986}.

\subsection*{Examples and Applications of Thin Gluing}
We say that a gluing is {\it thin} if either $\dim_{\F}(W_1)=1$ or  $\dim_{\F}(W_2)=1$.
If $\dim_{\F}(W_1)=1$, then the gluing is polynomial with $A=\F[N_{\M}(y_1)]$.
If $\dim_{\F}(W_2)=1$, then the representation of $\M$ is what is known as a hyperplane representation; 
the ring of invariants $\F[V]^{\M}$ is still a polynomial algebra but the 
generators are not necessarily orbit products of variables (see \cite{CC2012}, \cite{HS2008} and \cite{LS1987}) 

\begin{thm}\label{thin_glue} If a finite group $G$ acts on an elementary abelian $p$-group $E$  and $\F$ is a sufficiently large field of
characteristic $p$, then there is a faithful representation of the semidirect product $G \ltimes E$ 
of dimension $|G|+1$ over $\F$ .
\end{thm}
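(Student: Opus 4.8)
The plan is to realize $G \ltimes E$ as a \emph{thin gluing} in which $\dim_{\F} W_1 = 1$ and $\dim_{\F} W_2 = |G|$, and then read off faithfulness from the general construction. Concretely, take $G_1$ to be the trivial group, $G_2 = G$, let $W_1 = \F$ be the trivial one-dimensional representation, and let $W_2 = \F G$ be the regular representation of $G$, which is faithful of dimension $|G|$. With $G_1$ trivial, the gluing $G_1 \times_{\M} G_2$ is just the semidirect product $\M \rtimes G$, and the extension $1 \to \M \to \M \rtimes G \to G \to 1$ is split (by $g \mapsto (1,0,g)$); hence it is isomorphic to $G \ltimes E$ as soon as $\M$ is isomorphic to $E$ as a module with $G$-action. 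So the whole problem reduces to exhibiting, inside $\Hom_{\F}(W_2, W_1) = W_2^{*}$ with its natural $G$-action, an $\F_p[G]$-submodule $\M$ isomorphic to $E$.

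To do this, observe that $W_2^{*} \cong \F G$ as an $\F G$-module (self-duality of the regular representation; $\F G$ is a symmetric algebra), so after restriction of scalars $W_2^{*}$ is a free $\F_p[G]$-module of rank $[\F : \F_p]$. It therefore suffices to know that the $\F_p[G]$-module $E$ embeds into some free $\F_p[G]$-module $(\F_p G)^{r}$: then, taking $\F$ large enough that $[\F : \F_p] \ge r$ (for instance $\F = \F_{p^r}$, or any field containing it), we obtain the required copy of $E$ in $W_2^{*}$, and we may declare $\M$ to be that copy. The embedding into a free module is the coinduction embedding: writing $E_0$ for the underlying $\F_p$-space of $E$ with trivial $G$-action, the map $E \to \F_p G \otimes_{\F_p} E_0$, $e \mapsto \sum_{g \in G} g \otimes g^{-1} e$, is $G$-equivariant and injective, and $\F_p G \otimes_{\F_p} E_0 \cong (\F_p G)^{\dim_{\F_p} E}$ as an $\F_p[G]$-module; so $r = \dim_{\F_p} E$ works. (Alternatively one may invoke that $\F_p G$ is self-injective, so every finitely generated module embeds into a projective, hence free, module.)

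Finally, assemble the pieces: with $\M$ chosen as above, $V = W_1 \oplus W_2$ has dimension $1 + |G| = |G| + 1$, it carries the action of $G_1 \times_{\M} G_2 \cong G \ltimes E$ described in Section~\ref{glue_constr}, and that action is faithful — an element $(1, \varphi, g)$ fixes every vector of $V$ only if $g$ fixes every vector of $W_2$, forcing $g = 1$ since $W_2$ is faithful, and $\varphi$ annihilates $W_2$, forcing $\varphi = 0$. The substantive step is the reduction to the embedding of $E$ in a free $\F_p[G]$-module (and the recognition that "sufficiently large $\F$" is exactly what makes room for it inside the $|G|$-dimensional $W_2^{*}$); everything else is bookkeeping. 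The one point that requires care is matching the $G$-action on $\M$ coming from conjugation inside the gluing with the prescribed action of $G$ on $E$, which just means keeping the left/right module conventions for $W_2^{*}$ straight and, if necessary, replacing the given action of $G$ on $E$ by $e \mapsto g^{-1} e$.
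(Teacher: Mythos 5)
Your proof is correct and uses the same central device as the paper (a thin gluing with the regular representation on one side and a trivial one-dimensional space on the other, together with an embedding of $E$ into a free $\F_pG$-module), but you choose the mirror-image configuration. The paper takes $G_1=G$, $W_1=\F G$, $G_2=\{1\}$, $W_2=\F$, so that $\mathcal{M}$ sits inside $\Hom_\F(W_2,W_1)\cong\F G$ as a left $\F G$-module and the conjugation action of $G$ on $\mathcal{M}$ is literally the left regular action; the only nontrivial step is then embedding $E\hookrightarrow\F G$ over $\F_p$, done via an injective envelope and a choice of $\F_p$-linearly independent scalars in $\F$. You instead take $G_1=\{1\}$, $W_1=\F$, $G_2=G$, $W_2=\F G$, so that $\mathcal{M}$ lives in $W_2^*=(\F G)^*$ with a right action, which forces an extra dualization: you must convert the right action to a left action and invoke the self-duality $(\F G)^*\cong\F G$ to land back in the same embedding problem. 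Your worry at the end about having to replace the $G$-action on $E$ by $e\mapsto g^{-1}e$ is actually unnecessary — the conjugation action on $\mathcal{M}\subset W_2^*$ is $\varphi\mapsto\varphi\circ g^{-1}$, and under the self-duality isomorphism $(\F G)^*\cong\F G$ (sending $g^*\mapsto g$) this \emph{is} the standard left regular action, so the twisting cancels and you recover $G\ltimes E$ on the nose. Your explicit coinduction map $e\mapsto\sum_{g\in G}g\otimes g^{-1}e$ is a pleasant concrete replacement for the paper's appeal to injective envelopes (at the cost of possibly needing a larger $\F$, which is harmless here); the paper's version of the thin gluing avoids the dualization entirely and is therefore slightly cleaner bookkeeping, but both routes are sound.
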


\begin{proof}
  The action of $G$ on $E$ makes $E$ a module over the group ring $\F_pG$. Every $\F_pG$ module has an injective envelope.
  Since injective $\F_pG$-modules are projective, this means we can embed $E$ into a free $\F_pG$-module, say
  $\mathcal{F}=\oplus_{i=1}^r\F_pGb_i$.
  Choose elements $c_1,\ldots,c_r\in \F$ so that $\{c_1,\ldots,c_r\}$ is linearly independent over $\F_p$. Then we can embed
  $\mathcal{F}$ into $\F G$ using the $\F_pG$-module map which takes $b_i$ to $c_i$. Composing the map from $E$ to $\mathcal{F}$ with
  the map from $\mathcal{F}$ to $\F G$ gives an $\F_pG$-module isomorphism from $E$ to an $\F_pG$-submodule of $\F G$, say $\mathcal{E}$.
  If we take $G_1=G$, $W_1=\F G$, $G_2={\bf 1}=\{1\}$ and $W_2=\F$, the thin gluing $G\times_{\mathcal{E}}{\bf 1}$ is isomorphic to
  $G\ltimes E$ and the associated representation is faithful with dimension $|G|+1$.
\end{proof}

\begin{coro} \label{min_dim}
  If the cyclic $p$-group of order $p^r$, $C_{p^r}$, acts on an elementary abelian $p$-group $E$ and
  $E$ contains a free $\F_pC_{p^r}$-submodule , then 
the minimum dimension of a faithful representation of $C_{p^r}\ltimes E$ over $\F$ is $p^r+1$, for $\F$ sufficiently large.
\end{coro}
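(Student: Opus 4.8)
The plan is to bound the dimension of a faithful representation from above and from below, separately. The upper bound is immediate from Theorem~\ref{thin_glue}: taking $G=C_{p^r}$ there produces, over any sufficiently large field of characteristic $p$, a faithful representation of $C_{p^r}\ltimes E$ of dimension $|C_{p^r}|+1=p^r+1$. So the real content is the lower bound, and this part holds over an arbitrary field of characteristic $p$: I claim that no faithful $\F$-representation of $\Gamma:=C_{p^r}\ltimes E$ has dimension at most $p^r$. The first move is a reduction to the regular module. The hypothesis gives a submodule $F\le E$ isomorphic to $\F_pC_{p^r}$; since $F$ is $C_{p^r}$-stable, $\Gamma_0:=C_{p^r}\ltimes F$ is a subgroup of $\Gamma$, and the restriction of a faithful representation to a subgroup is again faithful. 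Hence it suffices to prove that every faithful $\F$-representation of $\Gamma_0=C_{p^r}\ltimes\F_pC_{p^r}$, with the regular action, has dimension at least $p^r+1$.

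The heart of the matter is a nilpotency-class computation for $\Gamma_0$. Write $\sigma$ for a generator of $C_{p^r}$, identify $\F_pC_{p^r}$ with $\F_p[\sigma]/\bigl((\sigma-1)^{p^r}\bigr)$, and write elements of $\Gamma_0$ as pairs $(\sigma^a,v)$ with $v\in\F_pC_{p^r}$. From the commutator identity $[(\sigma^a,v),(\sigma^b,w)]=\bigl(1,\,(1-\sigma^b)v+(\sigma^a-1)w\bigr)$ in the semidirect product, a short induction yields $\gamma_k(\Gamma_0)=(\sigma-1)^{k-1}\,\F_pC_{p^r}$ for every $k\ge 2$, where the $\gamma_k$ are the terms of the lower central series. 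Since $(\sigma-1)^{m}\F_pC_{p^r}\ne 0$ precisely when $m<p^r$, it follows that $\gamma_{p^r}(\Gamma_0)\ne 1$ while $\gamma_{p^r+1}(\Gamma_0)=1$, so $\Gamma_0$ has nilpotency class $p^r$. This is exactly the place where the freeness of $F$ is used.

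To conclude, I would invoke two standard facts about finite $p$-groups in characteristic $p$. Since $\Gamma_0$ is a finite $p$-group and $\F$ has characteristic $p$, the trivial module is the only simple $\F\Gamma_0$-module, so any $d$-dimensional representation admits a complete $\Gamma_0$-stable flag; in a basis adapted to that flag, $\Gamma_0$ is realized as a subgroup of the group $U_d(\F)$ of upper unitriangular $d\times d$ matrices. As $U_d(\F)$ has nilpotency class $d-1$, and the nilpotency class of a group does not increase on passing to subgroups, a faithful $d$-dimensional representation of $\Gamma_0$ forces $p^r\le d-1$, that is, $d\ge p^r+1$. Combined with the upper bound, this yields the asserted minimum. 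I expect the nilpotency-class computation to be the main obstacle: the commutator bookkeeping must be done carefully, and an off-by-one error in where the lower central series terminates would destroy the bound.
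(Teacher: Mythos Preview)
Your argument is correct, but the paper takes a shorter route to the lower bound. Instead of computing the nilpotency class of $\Gamma_0=C_{p^r}\ltimes\F_pC_{p^r}$, the paper observes that $\Gamma_0$ contains an element of order $p^{r+1}$ (for instance $(\sigma,1)$, since $(\sigma,1)^{p^r}=(1,(\sigma-1)^{p^r-1})\neq 1$), and then uses that every $p$-element of $\GL_d(\F)$ in characteristic $p$ is unipotent and hence has order at most $p^{\lceil\log_p d\rceil}$; for $d=p^r$ this is $p^r$, forcing $d\ge p^r+1$.

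Both arguments ultimately rest on the nilpotency of $g-I$ for a unipotent matrix $g$, but they extract different numerical invariants from it: you bound the nilpotency class of the image in $U_d(\F)$ by $d-1$, while the paper bounds the exponent. The paper's version is quicker here because exhibiting a single element of large order avoids the inductive lower-central-series computation. Your approach, on the other hand, is a bit more robust: the nilpotency-class bound would still give information in situations where element orders happen to be small, and your explicit description $\gamma_k(\Gamma_0)=(\sigma-1)^{k-1}\F_pC_{p^r}$ is a nice structural statement in its own right. Either way the upper bound via Theorem~\ref{thin_glue} is handled identically.
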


\begin{proof}
  If $E$ contains a free $\F_pC_{p^r}$-submodule then $C_{p^r}\ltimes E$ contains an element of order $p^{r+1}$ (otherwise
  $C_{p^r}\ltimes E$ has exponent $p^r$).
  Since an element of $p$-power order in $\GL_{p^r}(\F)$ has order at most $p^r$,
  the minimum dimension of a faithful representation is at least $p^r+1$.
  If $\F$ is sufficiently large, then the existence of a faithful representation of dimension $p^r+1$ is given by Theorem~\ref{thin_glue}.
\end{proof}

\begin{exam}{\rm
  Specialise Example~\ref{Fqexam} by taking $W_2=\F_qG_2$ and $W_1=\F_qG_1$.
  Then $\M=\hom_{\F_q}(\F_qG_2,\F_qG_1)$ can be given the structure of a left module over $\F_q(G_1\times G_2)$ in a natural way.
  With this choice of action, $\M$ is the principal module generated by the linear map which maps $1\in \F_q G_2$ to $1\in \F_q G_1$
  and is zero on all other group elements. This means that $\M$ is isomorphic to $\F_q(G_1\times G_2)$.
  Using this observation, it is easy to verify that
  $(G_1\times G_2)\ltimes \F_q(G_1\times G_2)\cong G_1\times_{\M}G_2$.
  Therefore, the representation associated to  $G_1\times_{\M}G_2$ gives a faithful representation of
  $G\ltimes\F_qG$ of dimension $|G_1|+|G_2|$ whenever $G\cong G_1\times G_2$. Taking $G_1=G_2=C_p$ gives
  an upper bound of $2p$ on on the minimum dimension of a faithful representation of $(C_p\times C_p)\ltimes \F_p(C_p\times C_p)$.
} \end{exam}

\begin{rem} Theorem \ref{thin_glue} and Corollary \ref{min_dim} had their origins in a question from
  Alex Duncan related to his work with Christian Urech on
  representations of finite subgroups of Cremona groups.
\end{rem}

\section{Properties of Tensor Products}\label{tensor_sec}

In this section we summarise a number of properties of $\F[W_1]^{G_1}\otimes\F[W_2]^{G_2}$ which are inherited from $\F[W_1]^{G_1}$ and $\F[W_2]^{G_2}$.
We expect that most of the results of this section will be familiar to the experts.
Throughout we identify $\F[W_1]^{G_1}$ with $\F[W_1]^{G_1}\otimes\F\subset \F[W_1]^{G_1}\otimes\F[W_2]^{G_2}$ 
and $\F[W_2]^{G_2}$ with $\F\otimes \F[W_2]^{G_2}\subset \F[W_1]^{G_1}\otimes\F[W_2]^{G_2}$.
Let $\mathcal A$ denote the category of finitely generated commutative 
$\F$-algebras which are graded over the non-negative integers
and have both a unit and an augmentation. Most of the results of this section are valid for objects in this category.
One approach to this material would be to follow the model developed in \cite{BCM2018} for Noetherian local rings.
We have elected to take a more direct but ad hoc approach.

\begin{lem} \label{gen_lem}
Suppose $B$ and $C$ are objects in $\mathcal A$. Then
$$\dim_{\F}\left(\left(B\otimes C\right)_+/\left(B\otimes C\right)_+^2\right)=
\dim_{\F}(B_+/B_+^2)+\dim_{\F}(C_+/C_+^2).$$
\end{lem}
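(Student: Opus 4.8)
The plan is to compute both sides by identifying the degree-one part of each graded algebra with its space of indecomposable generators. Since $B$ and $C$ are objects in $\mathcal{A}$, they are non-negatively graded with $B_0 = C_0 = \F$, and the augmentation ideals are $B_+ = \bigoplus_{d\geq 1} B_d$, $C_+ = \bigoplus_{d\geq 1} C_d$. First I would recall that for any such algebra, the quotient $B_+/B_+^2$ is the space of indecomposables, which is graded, and that there is nothing subtle about its relation to the tensor product provided we track the grading carefully.

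The key computation is to describe the augmentation ideal of $B\otimes C$ and its square. Using the identifications $B \cong B\otimes\F$ and $C\cong \F\otimes C$, one has $(B\otimes C)_+ = (B_+\otimes C) + (B\otimes C_+)$, and more usefully the direct sum decomposition $(B\otimes C)_+ = (B_+\otimes \F) \oplus (\F\otimes C_+) \oplus (B_+\otimes C_+)$ as $\F$-vector spaces. Then I would show that $(B\otimes C)_+^2 = (B_+^2\otimes\F) + (\F\otimes C_+^2) + (B_+\otimes C_+)$: the inclusion $\supseteq$ is immediate since $B_+^2\otimes\F = (B_+\otimes\F)^2$, similarly for $C$, and $B_+\otimes C_+ = (B_+\otimes\F)(\F\otimes C_+)$; the inclusion $\subseteq$ follows because a product of two elements of $(B\otimes C)_+$, expanded via the three-term decomposition above, lands in that sum (any cross term involving a $B_+$-factor and a $C_+$-factor lies in $B_+\otimes C_+$). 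Taking the quotient, the $B_+\otimes C_+$ summand dies entirely, and we are left with
\[
(B\otimes C)_+/(B\otimes C)_+^2 \;\cong\; \bigl(B_+/B_+^2\bigr)\oplus\bigl(C_+/C_+^2\bigr)
\]
as $\F$-vector spaces. Taking $\dim_\F$ of both sides gives the claimed identity, with the dimensions understood to possibly be infinite but in fact finite since $B$ and $C$ are finitely generated.

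The main obstacle — really the only point requiring care — is justifying the vector-space direct sum decomposition of $(B\otimes C)_+$ and especially the identity $(B\otimes C)_+^2 = B_+^2\otimes\F + \F\otimes C_+^2 + B_+\otimes C_+$; this is where one must use that the tensor product is over the ground field $\F$ (so $B\otimes C$ is a free module appropriately, and $B_d\otimes C_e$ summands do not collapse) and that the grading is non-negative so that $B_0 = \F$ sits in degree zero. Everything else is bookkeeping. One small caveat to state explicitly: the finite generation hypothesis guarantees all the dimensions appearing are finite, so the additivity of dimension over a direct sum is unambiguous.
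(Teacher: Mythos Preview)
Your proof is correct and follows essentially the same route as the paper's: both arguments rest on the direct-sum decompositions $(B\otimes C)_+ = (B_+\otimes\F)\oplus(B_+\otimes C_+)\oplus(\F\otimes C_+)$ and $(B\otimes C)_+^2 = (B_+^2\otimes\F)\oplus(B_+\otimes C_+)\oplus(\F\otimes C_+^2)$, then compare. You supply more justification for these identities than the paper does, but the underlying idea is identical.
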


\begin{proof} Observe that 
$(B\otimes C)_+ = (B_+\otimes\F)  \oplus  (B_+\otimes C_+) \oplus (\F\otimes C_+)$ and
$ (B\otimes C)_+^2=(B_+^2\otimes \F) \oplus (\ B_+\otimes C_+) \oplus (\F\otimes C_+^2)$.
\end{proof}

Note that if $B$ is an object in $\mathcal A$, then every set of minimal homogeneous generators for $B$
projects to a homogeneous basis for the finite dimensional graded vector space $B_+/B_+^2$. 
Therefore, although $B$ does not have a unique minimal homogeneous generating set, the degrees of
a minimal homogeneous generating set are given by the dimensions of the homogeneous components of
$B_+/B_+^2$.  The following is a straightforward consequence of Lemma~\ref{gen_lem} and its proof.

\begin {prop} \label{Gen-prop}
If $\{f_1,\ldots,f_s\}$ is a minimal set of homogeneous generators for  $\F[W_1]^{G_1}$
and $\{h_1,\ldots, h_k\}$ is a minimal set of homogeneous generators for  $\F[W_2]^{G_2}$, then
$\{f_1,\ldots,f_s,h_1,\ldots,h_k\}$ is a minimal set of homogeneous generators for  
$\F[W_1]^{G_1}\otimes\F[W_2]^{G_2}$.
\end{prop}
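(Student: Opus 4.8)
The plan is to deduce Proposition~\ref{Gen-prop} directly from Lemma~\ref{gen_lem} together with the standard fact, recorded just before the statement, that for any object $B$ of $\mathcal A$ a set of homogeneous elements is a minimal generating set if and only if its image in $B_+/B_+^2$ is a (homogeneous) basis. So the strategy has two halves: first check that $\{f_1,\ldots,f_s,h_1,\ldots,h_k\}$ generates $\F[W_1]^{G_1}\otimes\F[W_2]^{G_2}$ as an $\F$-algebra, and then check that this generating set is minimal by a dimension count.

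First I would establish that the set generates. Write $B=\F[W_1]^{G_1}$ and $C=\F[W_2]^{G_2}$, and let $B'\subseteq B$ denote the subalgebra generated by $f_1,\ldots,f_s$ and $C'\subseteq C$ the subalgebra generated by $h_1,\ldots,h_k$; by hypothesis $B'=B$ and $C'=C$. Under the identifications fixed at the start of the section, the subalgebra of $B\otimes C$ generated by $\{f_1,\ldots,f_s,h_1,\ldots,h_k\}$ contains $B\otimes\F$ and $\F\otimes C$, hence contains every product $(b\otimes 1)(1\otimes c)=b\otimes c$, and therefore all of $B\otimes C$ since such simple tensors span. This step is routine and essentially formal.

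Next I would verify minimality. By Proposition~\ref{Gen-prop}'s hypotheses, the images of $f_1,\ldots,f_s$ form a basis of $B_+/B_+^2$ and the images of $h_1,\ldots,h_k$ form a basis of $C_+/C_+^2$; in particular $s=\dim_\F(B_+/B_+^2)$ and $k=\dim_\F(C_+/C_+^2)$. The decompositions displayed in the proof of Lemma~\ref{gen_lem}, namely $(B\otimes C)_+ = (B_+\otimes\F)\oplus(B_+\otimes C_+)\oplus(\F\otimes C_+)$ and $(B\otimes C)_+^2=(B_+^2\otimes\F)\oplus(B_+\otimes C_+)\oplus(\F\otimes C_+^2)$, show that the composite maps $B_+\to (B\otimes C)_+\to (B\otimes C)_+/(B\otimes C)_+^2$ and $C_+\to (B\otimes C)_+\to (B\otimes C)_+/(B\otimes C)_+^2$ induce an isomorphism $(B_+/B_+^2)\oplus(C_+/C_+^2)\xrightarrow{\ \sim\ }(B\otimes C)_+/(B\otimes C)_+^2$. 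Hence the images of $f_1,\ldots,f_s,h_1,\ldots,h_k$ in $(B\otimes C)_+/(B\otimes C)_+^2$ form a basis. Since we already know this set generates $B\otimes C$ and it has cardinality $s+k=\dim_\F\bigl((B\otimes C)_+/(B\otimes C)_+^2\bigr)$ by Lemma~\ref{gen_lem}, it is a minimal homogeneous generating set.

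I do not anticipate a genuine obstacle here; the only point requiring a little care is keeping the identifications $B\cong B\otimes\F$ and $C\cong\F\otimes C$ straight and confirming that the $\mathcal A$-structure (finite generation, grading, augmentation) on $B\otimes C$ is the one implicitly used, so that the ``generators $\leftrightarrow$ basis of $(\cdot)_+/(\cdot)_+^2$'' dictionary applies. One subtlety worth a sentence: a priori the $f_i$ and $h_j$ might repeat in degree or even coincide as elements of $B\otimes C$, but the direct-sum decomposition above rules this out, since their images in the quotient are linearly independent; this is exactly what the word ``minimal'' in the statement needs.
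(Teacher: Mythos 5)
Your proof is correct and follows exactly the route the paper has in mind: the paper states Proposition~\ref{Gen-prop} is ``a straightforward consequence of Lemma~\ref{gen_lem} and its proof,'' and you have simply filled in the two routine steps (generation via simple tensors, minimality via the dimension count on $(B\otimes C)_+/(B\otimes C)_+^2$ using the direct-sum decompositions appearing in the proof of Lemma~\ref{gen_lem}). Nothing to flag.
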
 

We will use Koszul complexes to relate properties of $B\otimes C$  to properties of $B$ and $C$.
We suggest \cite[\S 1.6]{BH1993} as a good reference for the basic properties of Koszul complexes.
Suppose $B$ is an object in $\mathcal{A}$ and $f_1,\ldots,f_s$ is a minimal set of homogeneous generators for $B$.
Let $K_B(\underline{f})$ 
denote the Koszul complex determined by the sequence $f_1,\ldots, f_s\in B$.
Similarly, let $K_C(\underline{h})$ denote the Koszul complex determined by a minimal set of homogeneous generators $h_1,\ldots, h_k$ for $C$.
We write $K_{B\otimes C}(\underline{f},\underline{h})$ for the Koszul complex determined by 
$f_1\otimes 1,\ldots,f_s\otimes 1,1\otimes h_1,\ldots,1\otimes h_k\in B\otimes C$.
\begin{lem}\label{Koszul-lem}
  There is an isomorphism of differential graded $(B\otimes C)$-algebras from
  $K_{B\otimes C}(\underline{f},\underline{h})$ 
to $K_B(\underline{f})\otimes K_C(\underline{h})$.
\end{lem}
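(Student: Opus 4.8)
The plan is to exhibit the isomorphism explicitly on the natural algebra generators and then check compatibility with both the multiplication and the differential. Recall that $K_B(\underline f)$ is the exterior algebra $\Lambda_B^\bullet(Be_1\oplus\cdots\oplus Be_s)$ on a free $B$-module of rank $s$, with each $e_i$ in homological degree $1$, equipped with the unique $B$-linear graded derivation satisfying $d(e_i)=f_i$; likewise $K_C(\underline h)=\Lambda_C^\bullet(Ce_1'\oplus\cdots\oplus Ce_k')$ with $d(e_j')=h_j$. The tensor product $K_B(\underline f)\otimes K_C(\underline h)$ over $\F$ carries its standard DG-algebra structure, with Koszul-signed multiplication $(a\otimes b)(a'\otimes b')=(-1)^{|b||a'|}aa'\otimes bb'$ and differential $d(a\otimes b)=d(a)\otimes b+(-1)^{|a|}a\otimes d(b)$, and it is a DG $(B\otimes C)$-algebra via the evident map $B\otimes C\to K_B(\underline f)\otimes K_C(\underline h)$.

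First I would identify the underlying graded $(B\otimes C)$-algebras. Since $\F$ is a field, $-\otimes_\F B$ and $-\otimes_\F C$ are exact, so tensoring the base change isomorphisms $\Lambda_B^\bullet(B^{s})\otimes_\F C\cong\Lambda_{B\otimes C}^\bullet\big((B\otimes C)^{s}\big)$ and $B\otimes_\F\Lambda_C^\bullet(C^{k})\cong\Lambda_{B\otimes C}^\bullet\big((B\otimes C)^{k}\big)$ together with the canonical graded-algebra isomorphism $\Lambda(M)\otimes\Lambda(N)\cong\Lambda(M\oplus N)$ for the free $(B\otimes C)$-module $(B\otimes C)^{s+k}=(B\otimes C)^{s}\oplus(B\otimes C)^{k}$ yields a graded $(B\otimes C)$-algebra isomorphism
\[
K_B(\underline f)\otimes K_C(\underline h)\;\cong\;\Lambda_{B\otimes C}^\bullet\big((B\otimes C)^{s+k}\big),
\]
sending $e_i\otimes 1\mapsto e_i$ and $1\otimes e_j'\mapsto e_{s+j}$ for the standard basis $e_1,\ldots,e_{s+k}$. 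By definition, the right-hand side is the underlying graded algebra of $K_{B\otimes C}(\underline f,\underline h)$ once its defining sequence is taken to be $f_1\otimes 1,\ldots,f_s\otimes 1,1\otimes h_1,\ldots,1\otimes h_k$. One checks, using the Koszul sign rule, that the anticommutation relations among the $e_i\otimes 1$ and $1\otimes e_j'$ match those among $e_1,\ldots,e_{s+k}$, so this really is an algebra isomorphism.

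It then remains to verify that this isomorphism intertwines the differentials, and for that it suffices to compare them on the algebra generators, since both sides are generated as $(B\otimes C)$-algebras by these elements and both differentials are graded derivations that vanish on $B\otimes C$. On $e_i\otimes 1$ the tensor-product differential gives $d(e_i)\otimes 1=f_i\otimes 1$, and on $1\otimes e_j'$ it gives $(-1)^{0}\,1\otimes d(e_j')=1\otimes h_j$; these are precisely the values of the Koszul differential of $K_{B\otimes C}(\underline f,\underline h)$ on $e_i$ and $e_{s+j}$. Hence the differentials agree, and the graded-algebra isomorphism above is an isomorphism of differential graded $(B\otimes C)$-algebras.

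I expect the only delicate point to be the sign bookkeeping: making sure the Koszul sign convention in the product and the Leibniz rule in the differential on $K_B(\underline f)\otimes K_C(\underline h)$ are exactly compatible with the base-change identifications of the first step. Since $\F$ is a field there are no flatness obstructions, and the exterior-algebra manipulations are entirely standard; see \cite[\S 1.6]{BH1993}.
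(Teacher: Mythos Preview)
Your proof is correct and follows essentially the same approach as the paper: both identify the underlying exterior algebras by matching the generators $e_i\leftrightarrow e_i\otimes 1$ and $e_{s+j}\leftrightarrow 1\otimes e_j'$, then verify compatibility of the differentials on these generators using the Koszul sign convention. The only cosmetic difference is that the paper invokes the universal property of the exterior algebra to produce the map $K_{B\otimes C}(\underline f,\underline h)\to K_B(\underline f)\otimes K_C(\underline h)$ and then checks it is an isomorphism by comparing ranks, whereas you package the same content as the standard base-change isomorphism together with $\Lambda(M)\otimes\Lambda(N)\cong\Lambda(M\oplus N)$.
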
 

\begin{proof}
The differential graded algebra $K_{B\otimes C}(\underline{f},\underline{h})$
is the exterior algebra on the free $(B\otimes C)$-module generated
$\{e_i,b_j\mid i=1,\ldots,s;j=1,\ldots,k\}$ with the differential determined by
$d(e_i)=f_i\otimes 1$ and $d(b_j)=1\otimes h_j$. Using the univeral property of the exterior
algebra, the map taking $e_i$ to $e_i\otimes 1$ and $b_i$ to $1\otimes b_i$, extends to an algebra
map from  $K_{B\otimes C}(\underline{f},\underline{h})$ 
to $K_B(\underline{f})\otimes K_C(\underline{h})$.
Using the Koszul sign convention to define the differential on $K_B(\underline{f})\otimes K_C(\underline{h})$,
we see that the map is a map of differential graded algebras. Since the graded components of both
algebras are free $(B\otimes C)$-modules of the same rank and the map takes a basis to a basis, we have the
required isomorphism.
  \end{proof}

It follows from Lemma \ref{Koszul-lem} that the homology of $K_{B\otimes C}(\underline{f},\underline{h})$ 
is isomorphic to the homology of $K_B(\underline{f})\otimes K_C(\underline{h})$.
Since the tensor product is over a field, the K\"unneth formula \cite[Theorem 3.6.3]{Weibel1994} gives
$$H_n\left(K_B(\underline{f})\otimes K_C(\underline{h})\right)\cong \bigoplus_{i+j=n} H_i\left(K_B(\underline{f})\right)\otimes H_j\left(K_C(\underline{h})\right).$$
Since the Koszul homology is independent of the choice of generating set we write $H_*(B)$ for the homology of $K_B(\underline{f})$.
Using this convention and the above observations, we have
\begin{equation}\label{Koz_eq}
H_n\left(B\otimes C\right)\cong \bigoplus_{i+j=n} H_i\left(B\right)\otimes H_j\left(C\right).
\end{equation}

Recall that, for an object $B$ in $\mathcal{A}$, the depth of $B$ is 
$\grade(B_+,B)$, the length of a maximal $B$-regular sequence in $B_+$.
Since the grade can be computed using the Koszul complex,
see \cite[Theorem 1.6.17]{BH1993}, we have the following proposition.

\begin{prop} \label{Depth-prop}
$\depth(\F[W_1]^{G_1}\otimes\F[W_2]^{G_2})=\depth(\F[W_1]^{G_1})+\depth(\F[W_2]^{G_2})$.
\end{prop}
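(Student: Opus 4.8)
The plan is to deduce the depth formula from the Koszul-homology computation already in hand, namely equation~\eqref{Koz_eq}, together with the fact that depth is detected by Koszul homology. Recall from \cite[Theorem 1.6.17]{BH1993} that if $B$ is an object in $\mathcal A$ with minimal homogeneous generators $f_1,\ldots,f_s$, then $\grade(B_+,B)=s-\max\{n\mid H_n(K_B(\underline f))\neq 0\}$; equivalently, writing $t(B):=\max\{n\mid H_n(B)\neq 0\}$ for the top nonvanishing Koszul homology degree, we have $\depth(B)=s-t(B)$, where $s=\dim_\F(B_+/B_+^2)$ is the (well-defined) minimal number of generators.

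First I would set $B=\F[W_1]^{G_1}$ and $C=\F[W_2]^{G_2}$, with $s=\dim_\F(B_+/B_+^2)$ and $k=\dim_\F(C_+/C_+^2)$ their minimal numbers of homogeneous generators. By Proposition~\ref{Gen-prop} (or just Lemma~\ref{gen_lem}), $B\otimes C$ has exactly $s+k$ minimal homogeneous generators. Next I would identify the top nonvanishing Koszul homology degree of $B\otimes C$ using \eqref{Koz_eq}: since the tensor product is over the field $\F$, $H_i(B)\otimes H_j(C)$ is nonzero precisely when both factors are nonzero, so $H_n(B\otimes C)\neq 0$ if and only if $n=i+j$ for some $i\le t(B)$, $j\le t(C)$ with $H_i(B)\neq 0$ and $H_j(C)\neq 0$. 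In particular the top degree is $t(B\otimes C)=t(B)+t(C)$, attained by the summand $H_{t(B)}(B)\otimes H_{t(C)}(C)$, which is nonzero (no cancellation is possible in a direct sum of nonzero spaces). Finally, applying the formula $\depth=(\text{number of generators})-(\text{top homology degree})$ three times gives
\[
\depth(B\otimes C)=(s+k)-\bigl(t(B)+t(C)\bigr)=\bigl(s-t(B)\bigr)+\bigl(k-t(C)\bigr)=\depth(B)+\depth(C),
\]
which is the claim.

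The only point requiring a little care—and the one I would flag as the main obstacle—is making sure the top Koszul homology is nonzero, i.e.\ that $H_{t(B)}(B)$ and $H_{t(C)}(C)$ are both nonzero so that their tensor product over the field $\F$ is nonzero, and that the K\"unneth sum for $n=t(B)+t(C)$ has no other (possibly cancelling) contributions. The first is immediate from the definition of $t(\cdot)$; the second is automatic because a direct sum of $\F$-vector spaces one of whose summands is nonzero is itself nonzero—there is no cancellation in a coproduct. A secondary technical point is that the relevant homology must be the Koszul homology on a \emph{minimal} generating set (so that the grade formula applies), which is exactly the setup in which \eqref{Koz_eq} was derived, since Lemma~\ref{Koszul-lem} was stated for minimal generating sets and $f_1\otimes 1,\ldots,1\otimes h_k$ is a minimal generating set for $B\otimes C$ by Proposition~\ref{Gen-prop}. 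Once these bookkeeping points are dispatched, the argument is a two-line application of \cite[Theorem 1.6.17]{BH1993}.
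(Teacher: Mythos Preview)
Your proof is correct and follows the same approach as the paper: both deduce the depth formula from the K\"unneth-type identity \eqref{Koz_eq} together with the grade formula \cite[Theorem~1.6.17]{BH1993}, and you have simply written out explicitly the one-line argument the paper leaves to the reader. One minor remark: the grade formula in \cite[Theorem~1.6.17]{BH1993} holds for \emph{any} generating sequence of $B_+$, not only a minimal one, so your ``secondary technical point'' is not actually needed---though working with minimal generators is harmless and matches the paper's conventions.
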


Since an object in $\mathcal{A}$ is Cohen-Macaulay if and only if the depth is equal to the Krull dimension,
we get the following.

\begin{prop} \label{CM-prop} 
$\F[W_1]^{G_1}\otimes\F[W_2]^{G_2}$ is Cohen-Macaulay if and only if
both $\F[W_1]^{G_1}$ and $ \F[W_2]^{G_2}$ are Cohen-Macaulay.
\end {prop}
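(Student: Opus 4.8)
The plan is to deduce Proposition~\ref{CM-prop} directly from Proposition~\ref{Depth-prop} together with the analogous additivity of Krull dimension, using the standard characterisation of Cohen--Macaulayness for graded objects in $\mathcal{A}$: a finitely generated graded $\F$-algebra $R$ with augmentation is Cohen--Macaulay if and only if $\depth(R)=\dim(R)$, and in general $\depth(R)\le\dim(R)$.

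First I would record that $\dim\bigl(\F[W_1]^{G_1}\otimes\F[W_2]^{G_2}\bigr)=\dim\bigl(\F[W_1]^{G_1}\bigr)+\dim\bigl(\F[W_2]^{G_2}\bigr)$. This is the standard fact that the Krull dimension of a tensor product of two finitely generated algebras over a field is the sum of the Krull dimensions; one can see it, for instance, by choosing homogeneous systems of parameters for each factor and checking that their union is a homogeneous system of parameters for the tensor product (the tensor product is module-finite over the polynomial subalgebra they generate), or by invoking \cite[\S 1.6]{BH1993}-style dimension theory. Combining this with Proposition~\ref{Depth-prop} gives
\[
\dim\bigl(\F[W_1]^{G_1}\otimes\F[W_2]^{G_2}\bigr)-\depth\bigl(\F[W_1]^{G_1}\otimes\F[W_2]^{G_2}\bigr)
=\bigl(\dim\F[W_1]^{G_1}-\depth\F[W_1]^{G_1}\bigr)+\bigl(\dim\F[W_2]^{G_2}-\depth\F[W_2]^{G_2}\bigr).
\]
Since each summand on the right is a non-negative integer, the left-hand side vanishes if and only if both summands vanish, which is precisely the assertion that $\F[W_1]^{G_1}\otimes\F[W_2]^{G_2}$ is Cohen--Macaulay if and only if both factors are.

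The step requiring the most care is the dimension additivity, since Proposition~\ref{Depth-prop} has already been established and the rest is bookkeeping. In our graded setting this is cleanest via parameters: if $\theta_1,\ldots,\theta_a$ is an h.s.o.p.\ for $B=\F[W_1]^{G_1}$ and $\eta_1,\ldots,\eta_b$ an h.s.o.p.\ for $C=\F[W_2]^{G_2}$, then $B$ is finite over $\F[\underline\theta]$ and $C$ over $\F[\underline\eta]$, so $B\otimes C$ is finite over $\F[\underline\theta]\otimes\F[\underline\eta]=\F[\theta_1,\ldots,\theta_a,\eta_1,\ldots,\eta_b]$, a polynomial ring in $a+b$ variables; hence $\dim(B\otimes C)=a+b=\dim B+\dim C$. (One also needs that these $a+b$ elements are algebraically independent, which follows because any nontrivial relation, after specialising to points of the two varieties, would force a relation among the $\theta$'s or among the $\eta$'s.) With that in hand the proposition follows immediately, and I would present the argument in essentially the two sentences above.
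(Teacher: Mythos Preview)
Your proof is correct and follows essentially the same route as the paper: the paper deduces Proposition~\ref{CM-prop} in one line from Proposition~\ref{Depth-prop} together with the characterisation of Cohen--Macaulayness as $\depth=\dim$. You are simply more explicit about the dimension additivity (which the paper takes for granted here, though it later notes $\dim(\F[W_1]^{G_1}\otimes\F[W_2]^{G_2})=\dim_{\F}(W_1\oplus W_2)$ via the finiteness of the group actions) and about the non-negativity of $\dim-\depth$ needed to separate the two summands.
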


Suppose $B$ is an object in $\mathcal{A}$ and $f_1,\ldots,f_s$ is a minimal set of homogeneous generators for $B$.
Denote $R:=\F[X_1,\ldots, X_s]$ and consider the resulting presentation $\rho: R\ra B$ given by $\rho(X_i)=f_i$.
We say that $B$ is a {\it complete intersection} if $\ker(\rho)$ is generated by a regular sequence.
We use $\dim(B)$ to denote the Krull dimension of $B$. The analogue of the
following result is well-known in the setting of Noetherian local rings, see for example \cite[\S 2.3]{BH1993} and \cite[\S 21]{Mat1986}.

\begin{prop}\label{CI-char} Suppose $B$ is an integral domain. Using the above notation, $B$ is a complete intersection
  if and only if  $\dim_{\F}(H_1(B))=s-\dim(B)$.
\end{prop}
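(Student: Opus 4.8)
The plan is to reduce to the well-known characterization of complete intersections for Noetherian local (or graded) rings via the first Koszul homology, which in the local case states that a Noetherian local ring $B$ with embedding dimension $s$ is a complete intersection if and only if $\dim_{\F}(H_1(B)) = s - \dim(B)$, where $H_1(B)$ is the first homology of the Koszul complex on a minimal generating set of the maximal ideal. Since $B$ is a finitely generated graded $\F$-algebra which is an integral domain, $B$ is *-local with unique homogeneous maximal ideal $B_+$, and the graded version of the André--Quillen / Tate--Assmus theory applies verbatim; see \cite[\S 2.3]{BH1993}, especially the material on complete intersections and Theorem~2.3.3, together with \cite[\S 21]{Mat1986}. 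So the real content is to argue that \emph{our} definition of complete intersection (that $\ker\rho$ is generated by a regular sequence in $R=\F[X_1,\ldots,X_s]$) coincides with the one used in that theory, and that nothing is lost by working graded rather than local.

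First I would set up the presentation $\rho\colon R \to B$ with $R = \F[X_1,\ldots,X_s]$ a polynomial ring graded so that $\deg X_i = \deg f_i$, and let $I = \ker\rho$; note $I \subseteq R_+$ and in fact $I \subseteq R_+^2$ since the $f_i$ are a \emph{minimal} generating set, so $I$ contributes nothing in degree equal to any $\deg f_i$ beyond what is forced. Since $R$ is regular of dimension $s$ and $B = R/I$ has dimension $\dim(B)$, the height of $I$ is $s - \dim(B)$, and $I$ is generated by a regular sequence precisely when it is generated by $\height(I) = s-\dim(B)$ elements (a regular sequence in the Cohen--Macaulay, indeed regular, ring $R$ of length $\height(I)$; here one uses that any $R$-sequence contained in $I$ has length at most $\height(I)$, and a set of $\height(I)$ generators of an ideal of that height in a Cohen--Macaulay ring is automatically a regular sequence). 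Thus ``$B$ is a complete intersection'' is equivalent to ``$I$ can be generated by $s - \dim(B)$ elements.''

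Next I would compute $H_1(B)$, the first Koszul homology on $f_1,\ldots,f_s$, in terms of $I$. The standard identification (valid for a graded quotient of a polynomial ring, and this is where I would cite \cite[\S 2.3]{BH1993} or give the short argument via the two-term complex) gives $H_1(B) \cong I/(I R_+ + \text{Koszul relations})$; more precisely, $\dim_{\F} H_1(B)$ equals the minimal number of generators of $I$ as an $R$-module \emph{after} subtracting the contribution of the trivial Koszul syzygies, and because $I \subseteq R_+^2$ the Koszul syzygies among the $f_i$ lie in $R_+ I$, so in fact $\dim_{\F} H_1(B) = \dim_{\F}(I/R_+ I) = \mu(I)$, the minimal number of homogeneous generators of $I$. (This last point — that the Koszul relations among a minimal generating set do not reduce the count, equivalently that $H_1$ sees exactly $I/R_+I$ — is the place the hypothesis that the $f_i$ form a minimal generating set, hence $I\subseteq R_+^2$, is essential, and it is where the integral domain hypothesis is used to guarantee $B_+$ is the unique relevant prime.) Combining the two steps: $B$ is a complete intersection $\iff$ $\mu(I) = s - \dim(B)$ $\iff$ $\dim_{\F} H_1(B) = s - \dim(B)$.

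The main obstacle I anticipate is the second half of the chain of equivalences: showing that $\mu(I) = s - \dim(B)$ \emph{forces} $I$ to be generated by a regular sequence (the converse is easy). For this I would invoke that $R$ is Cohen--Macaulay: an ideal of height $h$ in a Cohen--Macaulay ring that is generated by $h$ elements has those generators forming a regular sequence, so the only thing to check is that $\mu(I) = s - \dim(B) = \height(I)$ cannot happen with the generators failing to be a system of parameters for $R/(\text{partial ideal})$ at each stage — but this is automatic since $h$ generators of an ideal of height $h$ necessarily generate an ideal of height exactly $h$ at every stage, hence form a regular sequence in the Cohen--Macaulay ring $R$. A secondary subtlety is purely bookkeeping: confirming that the ``$\dim_\F H_1(B)$'' appearing in the statement, computed from the Koszul complex on the $f_i \in B$, matches the $\Tor_1^R(B,\F)$ computation, which is routine from the definition of Koszul homology but should be stated. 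I would keep the write-up short by citing \cite[Theorem 2.3.3 and \S 2.3]{BH1993} for the local/graded complete intersection criterion and only spelling out the translation between our ideal-theoretic definition and the homological one.
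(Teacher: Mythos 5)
Your proposal is correct in outline and follows essentially the same route as the paper: compute $\dim_{\F}H_1(B)=\mu(\ker\rho)$, the minimal number of homogeneous generators of the defining ideal, and then use the Cohen--Macaulayness of $R=\F[X_1,\ldots,X_s]$ to show that $\ker\rho$ is generated by a regular sequence if and only if $\mu(\ker\rho)=\height(\ker\rho)=s-\dim(B)$. The paper carries out the first step explicitly via $H_1(K_B(\underline{f}))\cong\Tor_1^R(\F,B)$ and the $\Tor$ long exact sequence attached to $0\to\ker\rho\to R\to B\to 0$, and cites Sharp for the grade--$\mu$ characterisation of regular sequences, while you prefer to cite the local/graded theory in \cite[\S 2.3]{BH1993}; either is fine.

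Two of your side-remarks about the role of the hypotheses are off the mark, though, and should be repaired before you write this up. First, you claim $\ker\rho\subseteq R_+^2$ is ``essential'' for the identification $H_1(B)\cong\ker\rho/R_+\ker\rho$. It isn't: once $\ker\rho\subseteq R_+$, the map $\F\otimes_R\ker\rho\to\F\otimes_R R\cong\F$ in the long exact sequence is already zero (because the next map $\F\otimes_R R\to\F\otimes_R B$ is an isomorphism $\F\to\F$), so $\Tor_1^R(\F,B)\cong\F\otimes_R\ker\rho=\ker\rho/R_+\ker\rho$ with no appeal to minimality of the $f_i$. Your intermediate formula $H_1(B)\cong I/(IR_+ + \text{``Koszul relations''})$ is also not a recognised presentation of $H_1$ here (the Koszul syzygies live in $B^s$, not in $I$); the clean route is the $\Tor$ argument. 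Second, the integral-domain hypothesis is not there ``to guarantee $B_+$ is the unique relevant prime''---$B_+$ is automatically the unique homogeneous maximal ideal of any object in $\mathcal{A}$. The paper invokes it to observe that $\ker\rho$ is a prime ideal, which cleanly justifies the dimension/height bookkeeping $\height(\ker\rho)=\dim(R)-\dim(B)$ and the grade--height comparison. Neither slip breaks your argument, but both reflect confusion about which step uses which hypothesis.
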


\begin{proof}
Note that, since $B$ is an integral domain, $\ker(\rho)$ is a prime ideal.
Let $\mu(\ker(\rho))$ denote the number of elements in a minimal generating set for $\ker(\rho)$. 
If $B$ is a complete intersection then $\mu(\ker(\rho))=\grade(\ker(\rho),R)=\height(\ker(\rho))$.
Conversely, since $R$ is Cohen-Macaulay, $\height(\ker(\rho))=\grade(\ker(\rho),R)$ and if
$\grade(\ker(\rho),R)=\mu(\ker(\rho))$, then $\ker(\rho)$ is generated by a regular sequence, see \cite[Theorem~16.21]{Sharp2000}.
Observe that $K_B(\underline{f})\cong K_R(\underline{X})\otimes_R B$.
Therefore $H_1(K_B(\underline{f}))\cong \Tor^R_1(\F,B)$.
The short exact sequence of $R$-modules $0\to\ker(\rho)\to R \to B\to 0$
gives a long exact sequence ending in
$$\Tor^R_1(\F,R)\to \Tor^R_1(\F,B)\to \F\otimes_R \ker(\rho) \to \F\otimes_R R \to \F\otimes_R B \to 0.$$
Note that $\F\otimes_R R\cong\F\cong \F\otimes_R B$ and $\Tor^R_1(\F,R)=0$.
Hence
$$H_1(B)\cong \Tor^R_1(\F,B)\cong\F\otimes_R \ker(\rho)\cong \ker(\rho)/(R_+\ker(\rho))$$
and $\dim_{\F}(H_1(B))=\mu(\ker(\rho))$ (compare with \cite[page 170]{Mat1986}).
Recall that $\height(\ker(\rho))=\dim(R)-\dim(B)$ (see, for example, \cite[Theorem A.16]{BH1993}).
Puting these ideas together, we see that $B$ is a complete intersection if and only if
$\dim_{\F}(H_1(B))=\dim(R)-\dim(B)=s-\dim(B)$.
\end{proof}

\begin{prop}\label{CI-prop} 
$\F[W_1]^{G_1}\otimes\F[W_2]^{G_2}$ is a complete intersection  if and only if
both $\F[W_1]^{G_1}$ and $ \F[W_2]^{G_2}$ are complete intersections.
\end {prop}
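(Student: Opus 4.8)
The plan is to reduce the statement to the numerical criterion of Proposition~\ref{CI-char}. Set $B_1:=\F[W_1]^{G_1}$ and $B_2:=\F[W_2]^{G_2}$. First I would note that $B_1$, $B_2$ and $B_1\otimes B_2$ are all integral domains lying in $\mathcal A$: each $B_i$ is a subring of the polynomial ring $\F[W_i]$, and since tensoring over a field preserves injections, $B_1\otimes B_2$ is a subring of $\F[W_1]\otimes\F[W_2]=\F[W_1\oplus W_2]$, again a polynomial ring; moreover a tensor product of objects of $\mathcal A$ is again an object of $\mathcal A$. So Proposition~\ref{CI-char} applies to all three. Write $s_i$ for the number of elements in a minimal homogeneous generating set of $B_i$, put $d_i:=\dim(B_i)$, and write $s_{12},d_{12}$ for the corresponding data of $B_1\otimes B_2$. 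By Proposition~\ref{Gen-prop}, $s_{12}=s_1+s_2$. For the Krull dimensions, Noether normalization supplies module-finite inclusions $\F[\underline{t}]\hookrightarrow B_1$ and $\F[\underline{u}]\hookrightarrow B_2$ with $|\underline{t}|=d_1$ and $|\underline{u}|=d_2$; tensoring yields a module-finite inclusion $\F[\underline{t},\underline{u}]\hookrightarrow B_1\otimes B_2$, whence $d_{12}=d_1+d_2$.

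Next I would compute the first Koszul homology of the tensor product. By \eqref{Koz_eq}, $H_1(B_1\otimes B_2)\cong(H_1(B_1)\otimes H_0(B_2))\oplus(H_0(B_1)\otimes H_1(B_2))$. For any $B$ in $\mathcal A$ with minimal homogeneous generating set $f_1,\dots,f_s$, these generators generate the augmentation ideal $B_+$, so $H_0(B)=B/(f_1,\dots,f_s)=B/B_+\cong\F$. Hence $H_1(B_1\otimes B_2)\cong H_1(B_1)\oplus H_1(B_2)$ as $\F$-vector spaces, so $\dim_\F H_1(B_1\otimes B_2)=\dim_\F H_1(B_1)+\dim_\F H_1(B_2)$.

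Finally I would combine these with the fact, already extracted in the proof of Proposition~\ref{CI-char}, that for a domain $B$ in $\mathcal A$ one always has $\dim_\F H_1(B)\ge s-\dim(B)$, where $s$ is the number of minimal generators, with equality precisely when $B$ is a complete intersection: presenting $B=R/I$ on a minimal generating set, that proof gives $\dim_\F H_1(B)=\mu(I)$ and $\height(I)=\dim R-\dim B=s-\dim(B)$, and $\mu(I)\ge\height(I)$ by Krull's height theorem. Applying this to $B_1$, $B_2$ and $B_1\otimes B_2$ together with the three identities above, $B_1\otimes B_2$ is a complete intersection if and only if $\dim_\F H_1(B_1)+\dim_\F H_1(B_2)=(s_1-d_1)+(s_2-d_2)$; since each summand on the left dominates the matching summand on the right, this holds if and only if $\dim_\F H_1(B_i)=s_i-d_i$ for both $i=1,2$, that is, if and only if $B_1$ and $B_2$ are both complete intersections. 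The only genuinely non-formal inputs are the additivity of Krull dimension under $\otimes_\F$ and the inequality $\mu(I)\ge\height(I)$; I expect the one point needing real care to be ensuring that the generating sets are genuinely minimal, so that $\dim_\F H_1=\mu(I)$ holds on the nose.
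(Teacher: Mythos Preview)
Your proof is correct and follows essentially the same route as the paper: reduce to the numerical criterion of Proposition~\ref{CI-char} via the additivity of $\dim_\F H_1$ (from Equation~\eqref{Koz_eq}), of the number of minimal generators (Proposition~\ref{Gen-prop}), and of Krull dimension under $\otimes_\F$. Your version is in fact slightly more complete: you explicitly invoke Krull's height theorem to obtain the inequality $\dim_\F H_1(B)\ge s-\dim(B)$ needed to split the single equation for $B_1\otimes B_2$ into the two separate equations for $B_1$ and $B_2$, whereas the paper simply asserts that ``the result follows'' after establishing additivity (and computes Krull dimension via finiteness of the $G_i$ rather than Noether normalization, a cosmetic difference).
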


\begin{proof}
  Since $\F[W_1]^{G_1}$, $\F[W_2]^{G_2}$ and $\F[W_1\oplus W_2]^{G_1\times G_2}\cong \F[W_1]^{G_1}\otimes\F[W_2]^{G_2}$
  are all integral domains, Proposition \ref{CI-char} applies. Using Equation \ref{Koz_eq},
  we have
  $$\dim_{\F}\left( H_1(\F[W_1]^{G_1}\otimes\F[W_2]^{G_2})\right)=\dim_{\F}\left(H_1(\F[W_1]^{G_1})\right)
  +\dim_{\F}\left(H_2(\F[W_2]^{G_2})\right).$$
 Suppose $\{f_1,\ldots,f_s\}$ is a minimal set of homogeneous generators for  $\F[W_1]^{G_1}$
and $\{h_1,\ldots, h_k\}$ is a minimal set of homogeneous generators for  $\F[W_2]^{G_2}$.
Using Proposition \ref{Gen-prop}, we see that
$\{f_1,\ldots,f_s,h_1,\ldots,h_k\}$ is a minimal set of homogeneous generators for  
$\F[W_1]^{G_1}\otimes\F[W_2]^{G_2}$. Therefore $\F[W_1]^{G_1}\otimes\F[W_2]^{G_2}$
is a complete intersection if and only if
$$\dim_{\F}\left(H_1(\F[W_1]^{G_1}\otimes\F[W_2]^{G_2})\right)=(s+k)-\dim(\F[W_1]^{G_1}\otimes\F[W_2]^{G_2}).$$
Since $G_1$ and $G_2$ are finite groups and $\F[W_1]^{G_1}\otimes\F[W_2]^{G_2}\cong\F[W_1\oplus W_2]^{G_1\times G_2}$, we have
$$\dim(\F[W_1]^{G_1}\otimes\F[W_2]^{G_2})=\dim_{\F}(W_1\oplus W_2)=\dim(\F[W_1]^{G_1})+\dim(\F[W_2]^{G_2})$$
and the result follows.  
\end{proof}  

\begin{prop} \label{UFD-prop}
$\F[W_1]^{G_1}\otimes\F[W_2]^{G_2}$ is a unique factorisation domain  if and only if
both $\F[W_1]^{G_1}$ and $ \F[W_2]^{G_2}$ are unique factorisation domains.
\end {prop}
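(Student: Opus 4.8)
The plan is to reduce the statement to a known result about unique factorisation in tensor products of graded $\F$-algebras over a field, using the fact that rings of invariants of finite groups on polynomial rings are integrally closed Noetherian domains. First I would recall the classical criterion: a Noetherian integrally closed domain is a UFD if and only if its divisor class group vanishes. Since $\F[W_1]^{G_1}$ and $\F[W_2]^{G_2}$ are normal (being invariant rings of finite groups acting on polynomial rings), and $\F[W_1]^{G_1}\otimes\F[W_2]^{G_2}\cong\F[W_1\oplus W_2]^{G_1\times G_2}$ is likewise normal, all three rings are in the situation where the UFD property is governed by the class group.

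The key input I would invoke is the behaviour of the divisor class group under tensor product over a field. For two normal geometrically-connected affine $\F$-algebras $B$ and $C$ with $B\otimes_\F C$ a normal domain, there is a natural isomorphism $\mathrm{Cl}(B\otimes_\F C)\cong \mathrm{Cl}(B)\oplus\mathrm{Cl}(C)$; this is a result one can cite (it appears in work of Fossum, \emph{The Divisor Class Group of a Krull Domain}, and follows from the compatibility of reflexive modules with flat base change together with $\mathrm{Pic}$ and $\mathrm{Cl}$ calculations). Granting this, $\mathrm{Cl}(\F[W_1]^{G_1}\otimes\F[W_2]^{G_2})=0$ if and only if both $\mathrm{Cl}(\F[W_1]^{G_1})=0$ and $\mathrm{Cl}(\F[W_2]^{G_2})=0$, which is exactly the claim. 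The graded structure is helpful here: the augmentation makes all the rings graded connected, so there are no subtleties with $\F$-points or with the Picard group of the base, and one can even run the argument homogeneously.

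Alternatively, to stay self-contained and in the spirit of the rest of this section, I would give a direct argument. The nontrivial implication is that if $B:=\F[W_1]^{G_1}$ and $C:=\F[W_2]^{G_2}$ are UFDs then so is $B\otimes_\F C$. Since $B\otimes_\F C$ is a Noetherian normal domain, it suffices by Nagata's criterion to show that inverting a suitable set of prime elements yields a UFD. One inverts the (finitely many, by gradedness and Noetherianity) homogeneous prime elements coming from $B$ and from $C$; the localisation becomes $(\mathrm{Frac}(B)\otimes_\F C')$-type ring or, more carefully, $S^{-1}B\otimes_\F T^{-1}C$ where $S^{-1}B$ and $T^{-1}C$ are now UFDs with trivial class group, and one checks the localised tensor product is a UFD using that a tensor product over a field of a field extension with a UFD, or of two localised polynomial-like rings, is again a UFD. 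The converse direction is easier: $B$ is a direct summand (as a $B$-module, via the augmentation $C\to\F$) of $B\otimes_\F C$, hence any factorisation obstruction in $B$ would persist; more precisely $B\to B\otimes_\F C\to B$ shows $B$ is a retract, and one checks directly that height-one primes of $B$ extend to height-one primes of $B\otimes_\F C$ whose principality descends.

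I expect the main obstacle to be the forward direction's localisation step: making precise that after inverting the relevant homogeneous primes the ring $S^{-1}B\otimes_\F T^{-1}C$ is a UFD requires a small lemma that a tensor product over a field $\F$ of two $\F$-algebras that are localisations of polynomial rings (hence UFDs) with the property that they remain domains after tensoring is itself a UFD — this is where one genuinely uses that the ground ring is a \emph{field} (so the tensor product is flat and the generic fibres are nice) rather than an arbitrary base. The cleanest route is probably to cite Fossum's theorem on class groups of tensor products directly and note that all three rings in question are affine normal graded domains over $\F$ with $\F$ algebraically closed in each (automatic from the graded connected structure), so the hypotheses are met; I would then remark that this recovers, for instance, the known fact that $\F[W]^{G}$ need not be a UFD even when each tensor factor is, but the property is exactly inherited.
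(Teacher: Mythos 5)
Your proposal is correct in spirit but takes a genuinely different route from the paper. The paper's argument is much more specific to the invariant-theoretic setting: it identifies $\F[W_1]^{G_1}\otimes\F[W_2]^{G_2}$ with $\F[W_1\oplus W_2]^{G_1\times G_2}$ and then applies Nakajima's criterion (Benson, Corollary 3.9.3), which says a ring of invariants of a finite group is a UFD if and only if every homomorphism from the group to $\F^{\times}$ that is trivial on pseudoreflections is itself trivial. The point is then purely group-theoretic: pseudoreflections in $G_1\times G_2$ on $W_1\oplus W_2$ are exactly the elements $(g_1,1)$ and $(1,g_2)$ with $g_i$ a pseudoreflection, and a character of $G_1\times G_2$ is determined by its two restrictions, so the vanishing condition on $G_1\times G_2$ is equivalent to the conjunction of the vanishing conditions on $G_1$ and $G_2$. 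This is short, self-contained, and fully uses the hypothesis that the rings are invariant rings of \emph{finite groups} acting on polynomial rings. Your first route (class groups) works at a higher level of generality, as for normal graded connected $\F$-algebras, but it shifts the load onto a nontrivial theorem about $\mathrm{Cl}(B\otimes_\F C)\cong\mathrm{Cl}(B)\oplus\mathrm{Cl}(C)$ whose hypotheses you would have to pin down with care (normality of the tensor product, which here does hold because it is again a ring of invariants, and triviality of Picard groups, which holds in the graded connected case). Your second, ``direct'' route has a genuine gap you yourself flag: the claim that a graded Noetherian UFD has only finitely many homogeneous prime elements is false in general (already $\F[x,y]$ has a one-parameter family of linear primes), so the Nagata-criterion localisation step does not close as written. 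The retract argument for the converse direction is fine: a pure graded subring of a Krull UFD has injective class group map, so triviality descends. In short, your class-group argument buys generality at the cost of a heavier citation, while the paper's Nakajima-criterion argument is shorter, elementary, and exactly matched to the setting — worth knowing both, but the paper's is the efficient one here.
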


\begin{proof} Again we identify $\F[W_1]^{G_1}\otimes\F[W_2]^{G_2}$ with $\F[W_1\oplus W_2]^{G_1\times G_2}$.
  Using a result of Nakajima (see \cite[Corollary 3.9.3]{Ben1993}), a ring of invariants of a finite group
  is a unique factorisation domain if and only if there are no non-trivial homomorphisms from the group
  to the units of the field taking the value one on every pseudoreflection. The pseudoreflections
  for the action of $G_1\times G_2$ on $W_1\oplus W_2$ are precisely the elements $(g_1,1)$ and
  $(1,g_2)$ for $g_1$ a pseudoreflection for the action of $G_1$ on $W_1$ and
  $g_2$  a pseudoreflection for the action of $G_2$ on $W_2$. A homomorphism
  from $G_1\times G_2$ to $\F^{\times}$
  % $\phi:G_1\times G_2\to\F^{\times}$
  is determined uniquely by the restrictions
  to $G_1\times\{1\}$ and  $\{1\}\times G_2$. Therefore, any homomorphism
  $\phi:G_1\times G_2\to\F^{\times}$ which takes value one on every pseudoreflection will
  restrict to give homomorphisms $\phi_1:G_1\to \F^{\times}$ and $\phi_2:G_2\to \F^{\times}$
  which take value one on every pseudoreflection.
   \end{proof}

For a representation $V$ of a finite group $G$ a subset $\{f_1,\ldots,f_s\}\subset\F[V]^G$
is called a {\it geometric separating set} if the elements of the set can be use to distinguish (separate)
the $G$-orbits of $V\otimes_{\F}\overline{\F}$, where $\overline{\F}$ is the algebraic closure of $\F$
(see, for example, \cite{DEK2009}).

\begin {prop} \label{Sep-prop}  If $\{f_1,\ldots,f_s\}$ is a homogeneous geometric separating set for  $\F[W_1]^{G_1}$
and $\{h_1,\ldots, h_k\}$ is a homogeneous geometric separating set for  $\F[W_2]^{G_2}$, then
$\{f_1,\ldots,f_s,h_1,\ldots,h_k\}$ is a homogeneous geometric separating set for  
$\F[W_1]^{G_1}\otimes\F[W_2]^{G_2}$.
\end{prop}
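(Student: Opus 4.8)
The plan is to reduce the statement to the fact that separating sets detect closed orbits and that the categorical quotient of a product of affine $G$-varieties is the product of the quotients. First I would pass to the algebraic closure $\overline{\F}$ and work geometrically: a homogeneous geometric separating set for $\F[W_i]^{G_i}$ is, by definition, a set of functions in $\overline{\F}[W_i]^{G_i}$ whose common values on two points of $W_i\otimes\overline{\F}$ agree if and only if the points lie in the same $G_i$-orbit. Since $\F[W_1]^{G_1}\otimes\F[W_2]^{G_2}\cong\F[W_1\oplus W_2]^{G_1\times G_2}$ (the identification used throughout this section), I want to show that $\{f_1,\ldots,f_s,h_1,\ldots,h_k\}$ separates the $(G_1\times G_2)$-orbits of $(W_1\oplus W_2)\otimes\overline{\F}$.

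The key step is the following elementary observation about the orbit structure: two points $(u_1,u_2)$ and $(u_1',u_2')$ of $W_1\oplus W_2$ lie in the same $G_1\times G_2$-orbit if and only if $u_1, u_1'$ lie in the same $G_1$-orbit and $u_2, u_2'$ lie in the same $G_2$-orbit. This is immediate from the product action $(g_1,g_2)\cdot(u_1,u_2)=(g_1 u_1, g_2 u_2)$. So suppose the combined set takes the same values on $(u_1,u_2)$ and $(u_1',u_2')$. Since the $f_i$ depend only on the first coordinate, they take the same values on $u_1$ and $u_1'$, so by the separating property for $W_1$ the points $u_1, u_1'$ are in the same $G_1$-orbit; symmetrically $u_2, u_2'$ are in the same $G_2$-orbit; hence $(u_1,u_2)$ and $(u_1',u_2')$ are in the same $G_1\times G_2$-orbit. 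The converse inclusion — that functions in the invariant ring are constant on orbits — is automatic. This shows the combined set is separating.

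Two small points need care. First, one must check that restricting an invariant $f\in\overline\F[W_1]^{G_1}$ to the slice $\{u_2\}\times W_1$ (or rather evaluating $f\otimes 1$ at $(u_1,u_2)$) really does recover $f(u_1)$; this is exactly the content of the identification $\overline{\F}[W_1\oplus W_2]=\overline{\F}[W_1]\otimes\overline{\F}[W_2]$ with $f\otimes 1$ acting by $(f\otimes 1)(u_1,u_2)=f(u_1)$, so there is nothing beyond unwinding definitions. Second, one should note that base change to $\overline{\F}$ commutes with taking invariants here — for finite $G$ this is standard (the Reynolds-type averaging argument when $|G|$ is invertible is not available in the modular case, but $\overline{\F}[V]^G\otimes_{\overline\F}\overline\F$ issue does not arise since we simply work over $\overline\F$ from the start, and $(\F[V]^G)\otimes_\F\overline\F=\overline\F[V]^G$ for finite $G$ by a flatness argument). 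I expect the main obstacle to be purely expository: making precise that the "geometric" separating condition is genuinely about $\overline{\F}$-points and that the product decomposition of orbits interacts correctly with the tensor decomposition of coordinate rings; there is no serious mathematical difficulty once the definitions are laid out.

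\begin{proof}
We identify $\F[W_1]^{G_1}\otimes\F[W_2]^{G_2}$ with $\F[W_1\oplus W_2]^{G_1\times G_2}$.
After base change we may work over $\overline{\F}$, so set $V_i:=W_i\otimes_{\F}\overline{\F}$ and note that, since $G_1$ and $G_2$ are finite,
$\overline{\F}[V_1]^{G_1}=\overline{\F}\otimes_{\F}\F[W_1]^{G_1}$ and similarly for $G_2$ and for $G_1\times G_2$ acting on $V_1\oplus V_2$.
Under the identification $\overline{\F}[V_1\oplus V_2]=\overline{\F}[V_1]\otimes_{\overline{\F}}\overline{\F}[V_2]$, a function $f\otimes 1$ with $f\in\overline{\F}[V_1]$ satisfies $(f\otimes 1)(u_1,u_2)=f(u_1)$ for all $(u_1,u_2)\in V_1\oplus V_2$, and symmetrically for $1\otimes h$.

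For the product action $(g_1,g_2)\cdot(u_1,u_2)=(g_1 u_1, g_2 u_2)$ on $V_1\oplus V_2$, two points $(u_1,u_2)$ and $(u_1',u_2')$ lie in the same $(G_1\times G_2)$-orbit if and only if $u_1$ and $u_1'$ lie in the same $G_1$-orbit and $u_2$ and $u_2'$ lie in the same $G_2$-orbit.
Now suppose $(u_1,u_2),(u_1',u_2')\in V_1\oplus V_2$ are such that every element of $\{f_1\otimes 1,\ldots,f_s\otimes 1,1\otimes h_1,\ldots,1\otimes h_k\}$ takes the same value at the two points.
By the displayed evaluation formula, $f_i(u_1)=f_i(u_1')$ for all $i$, so since $\{f_1,\ldots,f_s\}$ is a geometric separating set for $\F[W_1]^{G_1}$ the points $u_1$ and $u_1'$ lie in the same $G_1$-orbit; symmetrically $u_2$ and $u_2'$ lie in the same $G_2$-orbit.
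Hence $(u_1,u_2)$ and $(u_1',u_2')$ lie in the same $(G_1\times G_2)$-orbit.
Conversely, the elements of $\F[W_1\oplus W_2]^{G_1\times G_2}$ are constant on $(G_1\times G_2)$-orbits.
Therefore $\{f_1,\ldots,f_s,h_1,\ldots,h_k\}$ separates the $(G_1\times G_2)$-orbits of $V_1\oplus V_2$, i.e.\ it is a homogeneous geometric separating set for $\F[W_1]^{G_1}\otimes\F[W_2]^{G_2}$.
\end{proof}
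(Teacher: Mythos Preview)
Your proof is correct and takes a genuinely different route from the paper's. You argue directly from the orbit-separation definition: since the $(G_1\times G_2)$-orbit of $(u_1,u_2)$ in $(W_1\oplus W_2)\otimes\overline{\F}$ is the product of the $G_1$-orbit of $u_1$ and the $G_2$-orbit of $u_2$, and since $f_i\otimes 1$ and $1\otimes h_j$ read off the factor coordinates, separating each factor separates the product. This is short and uses nothing beyond the definition.

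The paper instead invokes the algebraic characterisation from \cite[Proposition~1.2]{DEK2009}: a subset of $\F[V]^G$ is a geometric separating set if and only if $\F[V]^G$ is the purely inseparable closure in $\F[V]$ of the subalgebra it generates. With $B_i$ the subalgebra generated by the given separating set for $\F[W_i]^{G_i}$, the paper shows that any $f=\sum_j\alpha_j\otimes\beta_j\in\F[W_1]^{G_1}\otimes\F[W_2]^{G_2}$ satisfies $f^{p^N}\in B_1\otimes B_2$ for $N$ large, using additivity of the Frobenius. Your geometric argument avoids this machinery entirely and is arguably the more natural proof here; the paper's approach has the mild advantage that it never needs to name an orbit or pass to $\overline{\F}$-points explicitly, and it yields the purely inseparable closure statement as a by-product. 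A small cosmetic point: your aside about $\overline{\F}\otimes_{\F}\F[W_i]^{G_i}=\overline{\F}[V_i]^{G_i}$ is not actually needed for the argument, since the separating condition is already phrased in terms of $\overline{\F}$-points of elements of $\F[W_i]^{G_i}$.
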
 

\begin{proof}
  We identify $\F[W_1]^{G_1}\otimes\F[W_2]^{G_2}$ with $\F[W_1\oplus W_2]^{G_1\times G_2}$.
  Let $B_1$ denote the subalgebra of $\F[W_1]$ generated by $\{f_1,\ldots,f_s\}$ and
  let $B_2$ denote the subalgebra of $\F[W_2]$ generated by $\{h_1,\ldots,h_k\}$.
  Then $B:=B_1\otimes B_2$ is the subalagebra of  $\F[W_1\oplus W_2]$ generated by $\{f_1,\ldots,f_s,h_1,\ldots,h_k\}$.
  Using \cite[Proposition~1.2]{DEK2009}, $\F[W_i]^{G_i}$ is the purely inseparable closure of $B_i$ in $\F[W_i]$.
  Let $\overline{B}$ denote the purely inseparable closure of $B$ in $\F[W_1\oplus W_2]$.
  Since $B\subseteq\F[W_1\oplus W_2]^{G_1\times G_2}$, we see that $\overline{B}\subseteq\F[W_1\oplus W_2]^{G_1\times G_2}$.
  Therefore, we need only show that every homogeneous element
  of  $\F[W_1\oplus W_2]^{G_1\times G_2}$ lies in $\overline{B}$. Consider $f\in\F[W_1\oplus W_2]^{G_1\times G_2}$.
  Using Proposition~\ref{Gen-prop}, write $f=\sum_{j=1}^{\ell}\alpha_j\otimes \beta_j$ for some choice of
  $\alpha_j\in\F[W_1]$ and $\beta_j\in\F[W_2]$. Choose $N\in\Z^+$ so that $\alpha_j^{p^N}\in B_1$ and $\beta_j^{p^N}\in B_2$
  for all $j$. Then, since taking the $p^N$-power is additive, we have
  $f^{p^N}=\sum_{j=1}^{\ell}\alpha_j^{p^N}\otimes \beta_j^{p^N}\in B_1\otimes B_2=B$, as required.
  \end{proof}

\begin{thm}\label{prop_thm} Suppose $G_1\times_{\mathcal M} G_2$ is a polynomial gluing. Then
\begin{itemize} 
\item[(i)] $\F[V]^{G_1\times_{\mathcal M} G_2}$ is polynomial if and only if
both $\F[W_1]^{G_1}$ and $ \F[W_2]^{G_2}$ are polynomial;
\item[(ii)]
$\F[V]^{G_1\times_{\mathcal M} G_2}$ is Cohen-Macaulay if and only if
both $\F[W_1]^{G_1}$ and $ \F[W_2]^{G_2}$ are Cohen-Macaulay;
\item[(iii)]
$\F[V]^{G_1\times_{\mathcal M} G_2}$ is a complete intersection if and only if
both $\F[W_1]^{G_1}$ and $ \F[W_2]^{G_2}$ are complete intersections;
\item[(iv)]
$\F[V]^{G_1\times_{\mathcal M} G_2}$ is unique factorisation domain if and only if
both $\F[W_1]^{G_1}$ and $ \F[W_2]^{G_2}$ are unique factorisation domains.
\end{itemize}
\end{thm}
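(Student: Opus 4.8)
The plan is to reduce everything to the single observation, recorded immediately after the definition of a polynomial gluing, that $\F[V]^{G_1\times_{\mathcal M} G_2}$ is isomorphic as an $\F$-algebra to $\F[W_1]^{G_1}\otimes\F[W_2]^{G_2}$. Once this identification is made, parts (ii), (iii) and (iv) follow at once from Propositions~\ref{CM-prop}, \ref{CI-prop} and \ref{UFD-prop} respectively, each of which asserts precisely that the tensor product $\F[W_1]^{G_1}\otimes\F[W_2]^{G_2}$ has the relevant property (Cohen--Macaulay, complete intersection, unique factorisation domain) if and only if both tensor factors do. So parts (ii)--(iv) require no further argument.

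For part (i) there is no prepackaged proposition, so I would argue directly, writing $B=\F[W_1]^{G_1}$ and $C=\F[W_2]^{G_2}$. One implication is immediate: a tensor product of polynomial $\F$-algebras is again a polynomial $\F$-algebra, since $\F[y_1,\ldots,y_s]\otimes\F[z_1,\ldots,z_k]=\F[y_1,\ldots,y_s,z_1,\ldots,z_k]$. For the converse I would use the standard fact that an object $A$ of $\mathcal A$ with $A_0=\F$ is a polynomial algebra precisely when its minimal number of homogeneous generators $\dim_{\F}(A_+/A_+^2)$ equals its Krull dimension $\dim(A)$: writing $A=\F[X_1,\ldots,X_n]/I$ with $n=\dim_{\F}(A_+/A_+^2)$ gives $\dim(A)=n-\height(I)$, so $\dim(A)=n$ forces $\height(I)=0$ and hence $I=(0)$, the polynomial ring being a domain; moreover one always has $\dim_{\F}(A_+/A_+^2)\ge\dim(A)$. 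By Lemma~\ref{gen_lem}, $\dim_{\F}((B\otimes C)_+/(B\otimes C)_+^2)=\dim_{\F}(B_+/B_+^2)+\dim_{\F}(C_+/C_+^2)$, while $\dim(B\otimes C)=\dim(B)+\dim(C)$ (both equal $\dim_{\F}(W_1\oplus W_2)$, as in the proof of Proposition~\ref{CI-prop}). If $B\otimes C$ is polynomial then the two sums agree, and because each summand on the left dominates the corresponding summand on the right the agreement is termwise; thus $B$ and $C$ are both polynomial.

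The computations involved are entirely routine; the only point that needs mild care is the characterization of polynomial objects of $\mathcal A$ and the additivity of Krull dimension under tensor product invoked in part (i), neither of which is established earlier in the paper. I would either cite \cite{BH1993} for these facts or include the short argument sketched above. Everything else is bookkeeping around the isomorphism $\F[V]^{G_1\times_{\mathcal M} G_2}\cong\F[W_1]^{G_1}\otimes\F[W_2]^{G_2}$.
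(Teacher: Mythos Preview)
Your proof is correct and follows the same route as the paper: reduce to the isomorphism $\F[V]^{G_1\times_{\mathcal M} G_2}\cong\F[W_1]^{G_1}\otimes\F[W_2]^{G_2}$ and invoke the tensor-product propositions. The only difference is cosmetic: for part~(i) the paper cites Proposition~\ref{Gen-prop} (minimal generators of the tensor product), which together with the equalities $\dim\F[W_i]^{G_i}=\dim_\F W_i$ yields exactly the counting argument you spell out via Lemma~\ref{gen_lem}; so your ``no prepackaged proposition'' remark is slightly off, but the underlying reasoning is identical.
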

\begin{proof} Each of these properties is preserved by the gluing isomorphism. Therefore the results follow from 
Propositions \ref{Gen-prop}, \ref{CM-prop}, \ref{CI-prop} and \ref{UFD-prop}.
\end{proof}

\begin{thm}\label{Split_thm} Suppose $G_1\times_{\mathcal M} G_2$ is a split polynomial gluing. Then
$$\depth(\F[V]^{G_1\times_{\mathcal M} G_2})=\depth(\F[W_1]^{G_1})+\depth(\F[W_2]^{G_2}). $$
Furthermore, if $\psi: \F[W_1\oplus W_2] \to \F[V]^{\M}$ denotes an $\F$-algebra isomorphism which
is an extension of the inclusion of $\F[W_2]$ into $\F[V]^{\M}$ and restricts to a $G_1$-equivariant isomorphism of
 $\F[W_1]$ to $A$,
 then
$\{\psi(f_1),\ldots,\psi(f_s),h_1,\ldots,h_k)\}$ is a geometric separating set for 
$\F[V]^{G_1\times_{\mathcal M} G_2}$ where
$\{f_1,\ldots,f_s\}$ is a homogeneous geometric separating set for  $\F[W_1]^{G_1}$
and $\{h_1,\ldots, h_k\}$ is a homogeneous geometric separating set for  $\F[W_2]^{G_2}$.
\end{thm}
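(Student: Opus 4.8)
The plan is to establish the two assertions of Theorem~\ref{Split_thm} separately, since the depth formula is a consequence of the split polynomial structure together with Proposition~\ref{Depth-prop}, while the separating-set claim requires tracking the gluing isomorphism through Proposition~\ref{Sep-prop}. For the depth statement, the key observation is that a split polynomial gluing gives an $\F(G_1\times G_2)$-equivariant algebra isomorphism $\F[V]^{\M}\cong A\otimes\F[W_2]$ with $G_2$ acting trivially on $A$ and (via $\psi$) an $\F$-algebra $G_1$-equivariant isomorphism $\F[W_1]\xrightarrow{\sim}A$, hence $\F[V]^{\M}\cong\F[W_1]\otimes\F[W_2]$ as $\F(G_1\times G_2)$-modules and algebras. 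Taking $G_1\times G_2$-invariants and using that invariants commute with this tensor decomposition (as in the discussion following the definition of split gluing) yields $\F[V]^{G_1\times_{\M}G_2}\cong \F[W_1]^{G_1}\otimes\F[W_2]^{G_2}$. Proposition~\ref{Depth-prop} then gives the depth formula immediately.

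**Next I would** handle the separating set. First I note that $\psi$ restricts to a $G_1$-equivariant isomorphism $\F[W_1]\to A\subset\F[V]^{\M}$, so $\{\psi(f_1),\ldots,\psi(f_s)\}$ is a homogeneous geometric separating set for $A^{G_1}$, simply because $\psi$ is a graded $G_1$-equivariant $\F$-algebra isomorphism and the separating property is preserved under such isomorphisms (it is a statement about orbit-separation after base change to $\overline{\F}$, and $\psi$ induces a $G_1$-equivariant isomorphism of the affine schemes $\mathrm{Spec}$). Likewise $\{h_1,\ldots,h_k\}$ is a homogeneous geometric separating set for $\F[W_2]^{G_2}$ by hypothesis, sitting inside $\F[V]^{\M}$ via the identification $\F[W_2]\subset\F[V]^{\M}$. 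Since $\F[V]^{\M}=A\otimes\F[W_2]$ with $G_1$ acting only on the $A$ factor and $G_2$ acting only on the $\F[W_2]$ factor, the action of $G_1\times G_2$ on $\F[V]^{\M}$ is exactly the product action on $A\otimes\F[W_2]$. I can therefore apply Proposition~\ref{Sep-prop} with $W_1,G_1$ replaced by the representation underlying $A$ (i.e.\ $W_1$ itself via $\psi$) and $W_2,G_2$ unchanged: the conclusion is that $\{\psi(f_1),\ldots,\psi(f_s),h_1,\ldots,h_k\}$ is a homogeneous geometric separating set for $A^{G_1}\otimes\F[W_2]^{G_2}=\F[V]^{G_1\times_{\M}G_2}$.

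**The one point that requires care** — and the main obstacle — is the interaction between separating sets and the passage from $\F[V]^{\M}$ to $\F[V]$: a geometric separating set for $\F[V]^{\M}$ is not automatically a separating set for $\F[V]^{G_1\times_{\M}G_2}$ as a subalgebra of $\F[V]$, because separating is about orbits in $V\otimes\overline{\F}$, not in $(V/\M)\otimes\overline{\F}$. To bridge this I would use that the polynomials $x_1,\ldots,x_n$ together with the $N_{\M}(y_j)$ (the hsop $\HH$) already separate $\M$-orbits over $\overline{\F}$, and that $A^{G_1}\otimes\F[W_2]^{G_2}$ separates the $G_1\times G_2$-orbits on $\mathrm{Spec}(\F[V]^{\M})\otimes\overline{\F}=(V/\M)\otimes\overline{\F}$; composing, a set separating $G_1\times G_2$-orbits on $V/\M$ separates $G_1\times_{\M}G_2$-orbits on $V$ precisely because the $G_1\times_{\M}G_2$-orbits on $V$ are the preimages of the $G_1\times G_2$-orbits on $V/\M$ under the quotient map $V\to V/\M$. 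Actually, a cleaner route is to invoke \cite[Proposition~1.2]{DEK2009} directly: it suffices to show the subalgebra generated by $\{\psi(f_1),\ldots,\psi(f_s),h_1,\ldots,h_k\}$ has the same purely inseparable closure in $\F[V]$ as $\F[V]^{G_1\times_{\M}G_2}$; since $\F[V]^{G_1\times_{\M}G_2}\subset\F[V]^{\M}=A\otimes\F[W_2]$ and the closure can be computed inside $\F[V]^{\M}$ first (as $\F[V]^{\M}$ is integrally — indeed purely inseparably — independent only up to the behaviour over $\overline{\F}$), the argument of Proposition~\ref{Sep-prop} applies verbatim after the substitutions above, with the $p^N$-power trick producing elements of $B_1\otimes B_2$ inside $A\otimes\F[W_2]$. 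I would close by remarking that no new idea beyond Proposition~\ref{Sep-prop} and the split polynomial isomorphism is needed; the theorem is essentially a translation of those two facts through $\psi$.
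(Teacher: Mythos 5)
Your proposal is correct and takes essentially the same approach as the paper: transport Propositions~\ref{Depth-prop} and \ref{Sep-prop} through $\psi$, using that $\psi$ carries $\F[W_1\oplus W_2]_+^{G_1\times G_2}$ onto $\F[V]^{G_1\times_{\M}G_2}_+$ (for depth) and the purely-inseparable-closure characterisation of geometric separating sets (for the separating-set claim). One small inaccuracy: you call $\psi$ a \emph{graded} isomorphism, but, as the paper notes, $\psi$ is not degree preserving — what matters, and what your closure argument actually uses, is only that $\psi$ is an $\F$-algebra isomorphism preserving the augmentation ideal.
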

\begin{proof} 
While the map $\psi$ is not degree preserving, it maps $\F[W_1\oplus W_2]_+ ^{G_1\times G_2}$
to $\F[V]^{G_1\times_{\mathcal M} G_2}_+$. Therefore, the results follow from Propositions \ref{Depth-prop} and \ref{Sep-prop}.
\end{proof} 

\section{The Image of the Transfer}\label{trans_sec}

Suppose $G:=G_{1}\times_{\M}G_{2}$ is a polynomial gluing with
gluing isomorphism $\psi: \F[V]\ra\F[V]^{\M}$ where $\F[V]=\F[W_{1}\oplus W_{2}]$. 
We continue to identify $\F[W_{i}]$ with the appropriate subalgebra of $\F[V]$.
Recall that the transfer map $\Tr^G:\F[V]\ra\F[V]^G$ is the
morphism of $\F[V]^G$-modules defined by $\Tr^G(f):=\sum_{g\in G} f\cdot g$.
The image of the transfer is an ideal in $\F[V]^G$.
By a result of Bram Broer \cite{Broer2005}, since $\M$ is a $p$-group and $\F[V]^{\M}$ is polynomial,
the image of $\Tr^{\M}$ is a principal ideal in $\F[V]^{\M}$.
Let $\tau$ denote a generator for this ideal.
There is a factorisation
$\Tr^G=\Tr^{G_1\times G_2}\circ\Tr^{\M}$.
Furthermore, since $\M$ is a normal subgroup of $G$, % with quotient $G_1\times G_2$, 
 $G_1\times G_2$ stabilises the image of $\Tr^{\M}$; to see this observe that $(\sum_{h\in\M}f\cdot h)g=\sum_{h\in\M}(f\cdot g)(g^{-1}hg)$.
Therefore, for any $g\in G_1\times G_2$, we see that $\tau\cdot g$ is a scalar multiple of $\tau$. Thus we define a character of $G_1\times G_2$ by
$\tau\cdot g=\chi_{\tau}(g)\tau$. If $G_1\times G_2$ is a $p$-group, then $\chi_{\tau}$ is trivial and $\tau\in\F[V]^{G_1\times G_2}$.

\begin{prop}\label{ptransfer}
  Suppose $\{u_1,\ldots u_k\}$ is a generating set for the image of $\Tr^{G_1}$, $\{v_1,\ldots,v_s\}$ is a generating set for the image of $\Tr^{G_2}$
  and $\tau\in\F[V]^{G_1\times G_2}$ is a generator for the image of $\Tr^{\M}$.
Then the image of $\Tr^G$ is the ideal generated by $\{\tau\psi(u_iv_j)\mid 1\leqslant i\leqslant k,1\leqslant j\leqslant s\}$. 
\end{prop}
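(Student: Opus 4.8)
The plan is to exploit the factorisation $\Tr^G = \Tr^{G_1\times G_2}\circ\Tr^{\M}$ together with the fact that $\tau$ generates the image of $\Tr^{\M}$ as an ideal in $\F[V]^{\M}$. First I would compute the image of $\Tr^{\M}$ as an $\F[V]^{G_1\times G_2}$-module: since $\tau\in\F[V]^{G_1\times G_2}$ generates $\im(\Tr^{\M})$ as an ideal in $\F[V]^{\M}$, and $\F[V]^{\M}=\psi(\F[W_1\oplus W_2])$ as a $(G_1\times G_2)$-equivariant algebra, we have $\im(\Tr^{\M}) = \tau\cdot\psi(\F[W_1\oplus W_2])$, and applying $\Tr^{G_1\times G_2}$ to this yields $\im(\Tr^G)=\Tr^{G_1\times G_2}\bigl(\tau\cdot\psi(\F[W_1\oplus W_2])\bigr)$. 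Because $\tau$ is $(G_1\times G_2)$-invariant, it pulls out of the transfer: $\Tr^{G_1\times G_2}(\tau\cdot\psi(f)) = \tau\cdot\Tr^{G_1\times G_2}(\psi(f))$ for $f\in\F[W_1\oplus W_2]$. So the task reduces to identifying $\Tr^{G_1\times G_2}(\psi(\F[W_1\oplus W_2]))$.

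Next I would analyse $\Tr^{G_1\times G_2}$ on $\psi(\F[W_1\oplus W_2])$ using the product structure of $G_1\times G_2$. Since $\psi$ is $(G_1\times G_2)$-equivariant and $\F[W_1\oplus W_2]=\F[W_1]\otimes\F[W_2]$ with $G_1$ acting only on the first factor and $G_2$ only on the second, the transfer factors as $\Tr^{G_1\times G_2} = \Tr^{G_1}\otimes\Tr^{G_2}$ in the sense that on a product $ab$ with $a\in\F[W_1]$, $b\in\F[W_2]$ one has $\Tr^{G_1\times G_2}(ab) = \Tr^{G_1}(a)\,\Tr^{G_2}(b)$ (each element of $G_1\times G_2$ acts as a pair, and the orbit sum splits as a product of orbit sums). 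It follows that the image of $\Tr^{G_1\times G_2}$ on $\F[W_1]\otimes\F[W_2]$ is spanned over $\F[W_1]^{G_1}\otimes\F[W_2]^{G_2}$ by the products $\Tr^{G_1}(a)\Tr^{G_2}(b)$; writing the images of $\Tr^{G_1}$ and $\Tr^{G_2}$ as ideals generated by $\{u_1,\dots,u_k\}$ and $\{v_1,\dots,v_s\}$ respectively, this image is the ideal of $\F[W_1]^{G_1}\otimes\F[W_2]^{G_2}$ generated by $\{u_iv_j\}$. Pushing this forward through $\psi$ and multiplying by $\tau$ gives exactly the ideal generated by $\{\tau\psi(u_iv_j)\}$ in $\F[V]^G$, which is the claim.

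The step I expect to require the most care is the reduction "$\im(\Tr^G) = \Tr^{G_1\times G_2}(\im(\Tr^{\M}))$ and $\im(\Tr^{\M}) = \tau\cdot\F[V]^{\M}$ implies $\im(\Tr^G) = \tau\cdot\Tr^{G_1\times G_2}(\F[V]^{\M})$": one must be careful that pulling $\tau$ out of $\Tr^{G_1\times G_2}$ is legitimate — this uses precisely that $\tau\in\F[V]^{G_1\times G_2}$, which is the hypothesis, but one should also verify that no further elements enter, i.e. that $\Tr^{G_1\times G_2}(\tau\cdot\F[V]^{\M}) = \tau\cdot\Tr^{G_1\times G_2}(\F[V]^{\M})$ as $\F[V]^G$-modules and not merely as $\F$-subspaces. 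The second delicate point is the multiplicativity $\Tr^{G_1\times G_2}(ab)=\Tr^{G_1}(a)\Tr^{G_2}(b)$: I would justify it by the direct computation $\sum_{(g_1,g_2)} (ab)\cdot(g_1,g_2) = \sum_{(g_1,g_2)} (a\cdot g_1)(b\cdot g_2) = \bigl(\sum_{g_1} a\cdot g_1\bigr)\bigl(\sum_{g_2} b\cdot g_2\bigr)$, valid because $G_1$ fixes $\F[W_2]$ pointwise and $G_2$ fixes $\F[W_1]$ pointwise. Everything else — identifying the image of a composite transfer as the transfer of the image, and describing ideals by generating sets — is routine bookkeeping that I would not spell out in full.
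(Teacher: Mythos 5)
Your proposal is correct and follows the same approach as the paper: factor $\Tr^G = \Tr^{G_1\times G_2}\circ\Tr^{\M}$, use that $\tau$ generates $\im(\Tr^{\M})$ as an ideal in $\F[V]^{\M}$, pull $\tau$ out of $\Tr^{G_1\times G_2}$ by its invariance, transport across the $(G_1\times G_2)$-equivariant isomorphism $\psi$, and invoke the product-transfer identity to reduce to $\{u_iv_j\}$. The paper isolates that last step as a separate lemma (Lemma~\ref{tr_lem}) and verifies it on monomials, which is what your "spanned by products" argument amounts to. The only minor overcaution in your writeup is the worry about whether pulling $\tau$ out holds "as modules and not merely as $\F$-subspaces": in fact $\Tr^{G_1\times G_2}(\tau a)=\tau\,\Tr^{G_1\times G_2}(a)$ is a pointwise identity of functions, valid for every $a\in\F[V]^{\M}$ because $\tau\cdot g=\tau$ for all $g\in G_1\times G_2$, so there is nothing further to verify there.
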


We give the proof of Proposition \ref{ptransfer} after proving Lemma \ref{tr_lem} below.

\begin{exam}{\rm
  In the context of Example~\ref{Fqexam}, it follows from \cite[Theorem~4.4]{SW-trans} that $\tau=d_{n,n}(W_2^*)^m\in\F[V]^{G_1\times G_2}$.
  Hence, in this context, we can apply Proposition~\ref{ptransfer} to compute the image of $\Tr^G$ in terms of the image of $\Tr^{G_1}$
  and the image of $\Tr^{G_2}$.
  
  For instance, let $\mathcal{U}(n,\F_{p})$ be the group of $n \times n$ upper-triangular unipotent matrices over $\F_{p}$ and
consider $G=\mathcal{U}(4,\F_{p})$ and $G_{1}=G_{2}=\mathcal{U}(2,\F_{p})$. Then $G= G_{1}\times_{\M}G_{2}$ with $\M=\Hom_{\F_p}(\F_p^2,\F_p^2)$. 
By  \cite[Theorem~4.4]{SW-trans} we see that the image of $\Tr^{G}$ is the principal ideal generated by 
$$\delta:=d_{1,1}(x_{1})\cdot d_{2,2}(x_{1},x_{2})\cdot d_{3,3}(x_{1},x_{2},y_{1})$$
where $d_{i,j}$ denotes the Dickson invariants in the specified variables. In particular, $d_{1,1}(x_{1})=x_{1}^{p-1}$,
$d_{2,2}(x_{1},x_{2})=\det\Big(\begin{smallmatrix}
  x_{1}   & x_{1}^{p}   \\
   x_{2}  &  x_{2}^{p}
\end{smallmatrix}\Big)^{p-1}$ and 
$$d_{3,3}(x_{1},x_{2},y_{1})=\det\begin{pmatrix}
   x_{1}   & x_{1}^{p} &x_{1}^{p^{2}}  \\
    x_{2}  &  x_{2}^{p}& x_{2}^{p^{2}}\\
    y_{1} & y_{1}^{p} & y_{1}^{p^{2}}
\end{pmatrix}^{p-1}.$$
On the other hand, applying \cite[Theorem~4.4]{SW-trans} again we observe that the image of $\Tr^{\M}$ is the principal ideal generated by
$\tau=d_{2,2}(x_{1},x_{2})^{2}$,
which is $G_{1}\times G_{2}$-invariant. The image of $\Tr^{G_{1}}$ is generated by
$u:=d_{1,1}(x_{1})=x_{1}^{p-1}$ and the image of $\Tr^{G_{2}}$ is generated by
$v:=d_{1,1}(y_{1})=y_{1}^{p-1}$. Note that 
$\psi(u)=d_{1,1}(x_{1})$ and 
$$\psi(v)=\psi(y_{1})^{p-1}=(y_{1}^{p^{2}}+d_{1,2}(x_{1},x_{2})y_{1}^{p}+d_{2,2}(x_{1},x_{2})y_{1})^{p-1}.$$
Using Proposition 1.3(b) of \cite{Wil1983}, we see that $d_{3,3}(x_1,x_2,y_1)=-d_{2,2}(x_1,x_2)\psi(v)$.
Therefore $\tau\cdot \psi(u)\cdot\psi(v) =d_{2,2}(x_{1},x_{2})^{2} \cdot d_{1,1}(x_{1})\cdot\psi(v) 
=-d_{1,1}(x_{1})\cdot d_{2,2}(x_{1},x_{2}) \cdot d_{3,3}(x_{1},x_{2},y_{1})=-\delta$.
%Let $\ell:=\det\Big(\begin{smallmatrix}
 %  x_{1}   & x_{1}^{p}   \\
   % x_{2}  &  x_{2}^{p}
%\end{smallmatrix}\Big)$. Then $d_{2,2}(x_{1},x_{2})=\ell^{p-1}$. Recall that 
%$$d_{1,2}(x_{1},x_{2})=\frac{\det\begin{pmatrix}
  %  x_{1}  &x_{1}^{p^{2}}    \\
   % x_{2}  &x_{2}^{p^{2}} 
%\end{pmatrix}}{\ell}.$$
%Thus 
%\begin{eqnarray*}
%\tau\cdot \psi(u)\cdot\psi(v) & = & d_{1,1}(x_{1})\cdot d_{2,2}(x_{1},x_{2})^{2} \cdot [y_{1}^{p^{2}}-d_{1,2}(x_{1},x_{2})y_{1}^{p}+d_{2,2}(x_{1},x_{2})y_{1}]^{p-1}\\
 %& = & d_{1,1}(x_{1})\cdot d_{2,2}(x_{1},x_{2}) \cdot [\ell\cdot y_{1}^{p^{2}}-\ell\cdot d_{1,2}(x_{1},x_{2})y_{1}^{p}+\ell\cdot d_{2,2}(x_{1},x_{2})y_{1}]^{p-1}\\
 %&=& d_{1,1}(x_{1})\cdot d_{2,2}(x_{1},x_{2}) \cdot d_{3,3}(x_{1},x_{2},y_{1})\\
 %&=&\delta.
 %\end{eqnarray*}
}\end{exam}

\begin{lem}\label{tr_lem}
  Suppose $\{u_1,\ldots u_k\}$ is a generating set for the image of $\Tr^{G_1}$ and $\{v_1,\ldots,v_s\}$ is a generating set for the image of $\Tr^{G_2}$.
  Then the image of $\Tr^{G_{1}\times G_{2}}$ is generated by $\{u_{i}v_{j}\mid 1\leqslant i\leqslant k,1\leqslant j\leqslant s\}$.
\end{lem}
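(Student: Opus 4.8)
The plan is to reduce everything to the elementary fact that, for the direct product $G_1\times G_2$ acting factor-wise on $W_1\oplus W_2$, the transfer factors as a ``tensor product'' of the two transfers. Identifying $\F[W_1\oplus W_2]$ with $\F[W_1]\otimes\F[W_2]$ in the usual way, I would first verify that for $f\in\F[W_1]$ and $g\in\F[W_2]$,
\[
\Tr^{G_1\times G_2}(f\otimes g)=\sum_{(g_1,g_2)\in G_1\times G_2}(f\cdot g_1)\otimes(g\cdot g_2)=\Big(\sum_{g_1\in G_1}f\cdot g_1\Big)\otimes\Big(\sum_{g_2\in G_2}g\cdot g_2\Big)=\Tr^{G_1}(f)\otimes\Tr^{G_2}(g),
\]
where the middle equality just uses bilinearity of $\otimes$ over $\F$. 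Since every element of $\F[W_1]\otimes\F[W_2]$ is an $\F$-linear combination of elementary tensors, this shows that the image of $\Tr^{G_1\times G_2}$ is the $\F$-span of the products $\Tr^{G_1}(f)\otimes\Tr^{G_2}(g)$.

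Next I would prove the inclusion $\im\Tr^{G_1\times G_2}\subseteq I$, where $I$ is the ideal of $\F[W_1\oplus W_2]^{G_1\times G_2}\cong\F[W_1]^{G_1}\otimes\F[W_2]^{G_2}$ generated by $\{u_iv_j\}$. Because the image of $\Tr^{G_1}$ is an ideal of $\F[W_1]^{G_1}$ with generating set $\{u_1,\dots,u_k\}$, we may write $\Tr^{G_1}(f)=\sum_i a_iu_i$ with $a_i\in\F[W_1]^{G_1}$, and similarly $\Tr^{G_2}(g)=\sum_j b_jv_j$ with $b_j\in\F[W_2]^{G_2}$. Multiplying these out gives $\Tr^{G_1}(f)\otimes\Tr^{G_2}(g)=\sum_{i,j}(a_i\otimes b_j)(u_iv_j)$, and since each $a_i\otimes b_j\in\F[W_1]^{G_1}\otimes\F[W_2]^{G_2}$ this element lies in $I$; combined with the spanning statement of the first paragraph, $\im\Tr^{G_1\times G_2}\subseteq I$.

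For the reverse inclusion, each generator $u_i$ lies in $\im\Tr^{G_1}$, say $u_i=\Tr^{G_1}(\tilde f_i)$, and likewise $v_j=\Tr^{G_2}(\tilde g_j)$; then the factorisation identity gives $u_iv_j=\Tr^{G_1\times G_2}(\tilde f_i\otimes\tilde g_j)\in\im\Tr^{G_1\times G_2}$. As the image of the transfer is an ideal of the invariant ring, the ideal $I$ generated by the $u_iv_j$ is contained in $\im\Tr^{G_1\times G_2}$, and equality follows.

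I do not anticipate a genuine obstacle here: the factorisation $\Tr^{G_1\times G_2}(f\otimes g)=\Tr^{G_1}(f)\otimes\Tr^{G_2}(g)$ is the crux and is routine. The only point needing a little care is bookkeeping around the word ``generating set'', which refers to ideal generators inside the invariant ring rather than algebra generators, together with the identification $\F[W_1\oplus W_2]^{G_1\times G_2}\cong\F[W_1]^{G_1}\otimes\F[W_2]^{G_2}$, so that the coefficients $a_i\otimes b_j$ genuinely belong to the ambient invariant ring in which $I$ is an ideal.
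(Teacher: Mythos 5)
Your proof is correct and follows essentially the same approach as the paper's: both rest on the factorisation $\Tr^{G_1\times G_2}(f\otimes g)=\Tr^{G_1}(f)\otimes\Tr^{G_2}(g)$ for elementary tensors, combined with expanding a general polynomial over a basis of $\F[W_1]\otimes\F[W_2]$ (you use arbitrary elementary tensors, the paper uses monomials $x^Iy^J$, which amounts to the same thing). The two write-ups differ only in phrasing.
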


\begin{proof}
  Let $I^{G_1\times G_2}$ denote the image of $\Tr^{G_1\times G_2}$, let $I^{G_1}$ denote the image of $\Tr^{G_1}$ and
  let $I^{G_2}$ denote the image of $\Tr^{G_2}$.
  Observe that $\Tr^{G_1\times G_2}=\Tr^{\{1\}\times G_2}\circ\Tr^{G_1\times \{1\}}$.
  Choose $f_i\in \F[W_1]$ and $h_j\in\F[W_2]$ so that $\Tr^{G_1}(f_i)=u_i$ and $\Tr^{G_2}(h_j)=v_j$.
  Then 
  $$\Tr^{G_1\times G_2}(f_ih_j)=\Tr^{\{1\}\times G_2}\circ\Tr^{G_1\times \{1\}}(f_ih_j)=\Tr^{\{1\}\times G_2}(u_ih_j)=u_iv_j\in I^{G_1\times G_2}.$$
  On the other hand, for any $f\in\F[V]$, write $f=\sum_{I,J}c_{I,J}x^Iy^J$ for exponent sequences $I$ and $J$ with $c_{I,J}\in\F$.
  Then 
  $$\Tr^{G_1\times G_2}(f)=\sum_{I,J}c_{I,J}\Tr^{G_1\times G_2}(x^Iy^J)=\sum_{I,J}c_{I,J}\Tr^{G_2}(x^I)\Tr^{G_1}(y^J)\in I^{G_1}I^{G_2}.$$
  Hence $\Tr^{G_1\times G_2}(f)$ is in the ideal generated by $\{u_{i}v_{j}\mid 1\leqslant i\leqslant k,1\leqslant j\leqslant s\}$.
\end{proof}

\begin{proof}[Proof of Proposition \ref{ptransfer}]
  Let $I^G$ denote the image of $\Tr^G$.
 Choose $f_i\in \F[W_1]$ and $h_j\in\F[W_2]$ so that $\Tr^{G_1}(f_i)=u_i$ and $\Tr^{G_2}(h_j)=v_k$.
 Choose $\alpha\in\F[V]$ so that $\Tr^{\M}(\alpha)=\tau$.
 Then 
 \begin{eqnarray*}
 \Tr^G(\psi(f_ih_j)\alpha)&=&\Tr^{G_1\times G_2}\circ\Tr^{\M}(\psi(f_ih_j)\alpha)=\Tr^{G_1\times G_2}(\psi(f_ih_j)\tau)\\
 &=&\tau\psi(\Tr^{G_1\times G_2}(f_ih_j))=\tau\psi(u_iv_j)\in I^G.
 \end{eqnarray*}
 Suppose $f=\Tr^G(f')$. Then $f=\Tr^{G_1\times G_2}(\Tr^{\M}(f'))=\Tr^{G_1\times G_2}(\widetilde{f}\tau)$ for some $\widetilde{f}\in\F[V]^{\M}$.
 Since $\tau\in\F[V]^{G_1\times G_2}$ and $\psi$ is a $(G_1\times G_2)$-equivariant isomorphism, we get
 $$f=\tau\Tr^{G_1\times G_2}(\psi(f''))=\tau\psi(\Tr^{G_1\times G_2}(f''))$$
  for some $f''\in\F[V]$. Therefore
     $f$ is in the ideal generated by $\{\tau\psi(u_iv_j)\mid 1\leqslant i\leqslant k,1\leqslant j\leqslant s\}$. 
\end{proof}

\section{Maximal Parabolic Subgroups of Finite Symplectic Groups} \label{max_sec}

For this section, we let $V$ denote the defining representation of the symplectic group $\Sp_{2m}(\F_q)$ with $q=p^r$.
We choose an ordered basis  $e_1,e_2,\ldots, e_m,f_m,f_{m-1},\ldots,f_1$ for $V$ 
with dual basis $y_1,\ldots,y_m,x_m,\ldots,x_1$ so that the symplectic form is represented by the matrix
$$
J=\begin{pmatrix}
\phantom{-} 0 &Q\\
-Q& 0
\end{pmatrix}
\, {\rm with\,} \, 
Q=\begin{pmatrix}
0&0 &0&\cdots&0& 0 &1\\
0&0 &0&\cdots&0& 1 & 0\\
&&&\vdots&&&\\
0&1&0&\cdots&0&0&0\\
1& 0&0&\cdots&0&0 & 0
\end{pmatrix}.
$$
We identify $\Sp_{2m}(\F_q)$ with the set of matrices $A\in \GL_{2m}(\F_q)$ satisfying $A^TJA=J$.
Define $\xi_i:=y_m^{q^i}x_m-y_mx^{q^i}_m +\cdots + y_1^{q^i}x_1-y_1x_1^{q^i}$.
Carlisle and Kropholler proved that $\F_q[V]^{\Sp_{2m}(\F_q)}$ is the complete intersection generated by
$\xi_1,\ldots,\xi_{2m-1}$ and the Dickson invariants $d_{1,2m},\ldots,d_{m,2m}$, see \cite[Theorem~8.3.11]{Ben1993}.
Note that for $i>1$, $\xi_i$ can be constructed by applying Steenrod operations to $\xi_1$. 
Therefore $\Sp_{2m}(\F_q)$ is the subgroup of $\GL_{2m}(\F_q)$ which fixes $\xi_1$.

The symplectic group $\Sp_{2m}(\F_q)$ has a $BN$-pair of type $C_m$ 
(see \cite{Taylor1992} or \cite[\S 1.11]{Carter1985}).
To construct a maximal parabolic subgroup for $\Sp_{2m}(\F_q)$, we remove one of the $m$ generating reflections from the 
associated Weyl group, $N/(B\cap N)$, and lift the resulting subgroup to get a subgroup of $N$, say $\widetilde{N}$. 
The resulting parabolic subgroup is generated by $B$ and $\widetilde{N}$.
We label the vertices of the Coxeter graph so that the edge of weight $4$ joins vertex $m-1$ and $m$.
We will refer to the maximal parabolic subgroup constructed by removing the reflection corresponding to vertex $k$ as 
a maximal parabolic of type $k$ and denote this subgroup by $\maxpara_k$. 
The primary goal of this section is to compute $\F_q[V]^{\maxpara_k}$. In the following, we use $Q_k$ to represent the
$k\times k$ submatrix of $Q$ constructed by removing the first $m-k$ columns and the last $m-k$ rows.
Therefore 
$$J=\begin{pmatrix}
\phantom{-} 0&0&0&Q_k\\
\phantom{-} 0&0&Q_{m-k}&0\\
\phantom{-} 0&-Q_{m-k}&0&0\\
-Q_k&0&0&0
\end{pmatrix}. $$
Let $P_k$ denote the subgroup of $\GL_{2m}(\F_q)$ consisting of matrices of the form
$$\begin{pmatrix}
I_k & C_1& C_2&A\\
0 &I_{m-k}&0&B_1\\
0&0&I_{m-k}&B_2\\
0&0&0&I_k
\end{pmatrix}$$
subject to the relations
$C_1=-Q_kB_2^TQ_{m-k}$, $C_2=Q_kB_1^TQ_{m-k}$ and
$ A^TQ_k-Q_kA=B_2^TQ_{m-k}B_1-B_1^TQ_{m-k}B_2$.
Observe that $P_k$ is a $p$-group and a subgroup of $\Sp_{2m}(\F_q)$ .

\begin{prop} \label{max_para_prop} The maximal parabolic subgroup $\maxpara_k$ is isomorphic to 
$$\left(\GL_k(\F_q)\times \Sp_{2m-2k}(\F_q)\right)\ltimes P_k$$
with the action of $\GL_k(\F_q)\times \Sp_{2m-2k}(\F_q)$ on $P_k$
given by mapping $(A,B)\in \GL_k(\F_q)\times \Sp_{2m-2k}(\F_q)$ 
to
$$ \begin{pmatrix}
A &0&0\\
0&B&0\\
0&0& Q_k(A^{-1})^TQ_k
\end{pmatrix} \in
\Sp_{2m}(\F_q).$$
\end{prop}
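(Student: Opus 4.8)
The plan is to realise $\maxpara_k$ as the stabiliser in $\Sp_{2m}(\F_q)$ of the totally isotropic subspace $E_k:=\langle e_1,\dots,e_k\rangle$ and then to extract the asserted structure by a direct matrix computation. First I would invoke the standard dictionary between subsets of the simple reflections in a $BN$-pair of type $C_m$ and the stabilisers of members of the defining isotropic flag (see \cite[\S 1.11]{Carter1985} or \cite{Taylor1992}): deleting the node $k$ leaves a Coxeter graph of type $A_{k-1}\times C_{m-k}$, whose $A_{k-1}$-part acts on $E_k$ by permuting its sub-flags and whose $C_{m-k}$-part acts on the symplectic space $E_k^{\perp}/E_k$, so that $E_k$ is the only retained datum; hence $\maxpara_k=\{g\in\Sp_{2m}(\F_q)\mid gE_k=E_k\}$. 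Since $E_k^{\perp}=\langle e_1,\dots,e_m,f_m,\dots,f_{k+1}\rangle$, every such $g$ preserves the flag $E_k\subset E_k^{\perp}\subset V$ and so, in the chosen basis, is block upper triangular with coarse diagonal blocks of sizes $k,\,2(m-k),\,k$; write these as $D_1\in\GL_k(\F_q)$, $D_2\in\GL_{2(m-k)}(\F_q)$, $D_3\in\GL_k(\F_q)$.

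Next I would impose $g^TJg=J$, expanding block by block using the antidiagonal shape of $J$ recorded in the statement. The corner blocks force $D_3=Q_k(D_1^{-1})^TQ_k$ and the middle blocks force $D_2\in\Sp_{2m-2k}(\F_q)$ with respect to the form $\left(\begin{smallmatrix}0&Q_{m-k}\\-Q_{m-k}&0\end{smallmatrix}\right)$, so that the block diagonal matrix $\diag(D_1,D_2,D_3)$ already lies in $\Sp_{2m}(\F_q)$; factoring $g=\diag(D_1,D_2,D_3)\cdot u$ with $u$ unipotent block upper triangular, the condition $g^TJg=J$ reduces to $u^TJu=J$, and expanding this yields exactly the relations $C_1=-Q_kB_2^TQ_{m-k}$, $C_2=Q_kB_1^TQ_{m-k}$ and $A^TQ_k-Q_kA=B_2^TQ_{m-k}B_1-B_1^TQ_{m-k}B_2$ among the off-diagonal blocks of $u$. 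Conversely, running the computation backwards shows that every $\diag(A,B,Q_k(A^{-1})^TQ_k)$ with $(A,B)\in\GL_k(\F_q)\times\Sp_{2m-2k}(\F_q)$, and every matrix of the displayed $P_k$-form, belongs to $\Sp_{2m}(\F_q)$ and stabilises $E_k$. Thus, as a set of matrices, $\maxpara_k=L\cdot P_k$, where $L$ is the group of block diagonal matrices $\diag(A,B,Q_k(A^{-1})^TQ_k)$.

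Finally I would assemble the group statement. Since $A\mapsto Q_k(A^{-1})^TQ_k$ is an automorphism of $\GL_k(\F_q)$ (the contragredient, conjugated by $Q_k$, using $Q_k^2=I_k$), the assignment $(A,B)\mapsto\diag(A,B,Q_k(A^{-1})^TQ_k)$ is an injective homomorphism $\GL_k(\F_q)\times\Sp_{2m-2k}(\F_q)\to\Sp_{2m}(\F_q)$ with image $L$. The subgroup $P_k$ consists precisely of the elements of $\maxpara_k$ acting trivially on $E_k$ and on $E_k^{\perp}/E_k$ (triviality on $V/E_k^{\perp}$ being then automatic by $J$-duality), so it is normal in $\maxpara_k$, and it is a $p$-group because it is unipotent. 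As $L\cap P_k=\{1\}$ and $L\cdot P_k=\maxpara_k$ by the previous step, we obtain $\maxpara_k\cong L\ltimes P_k\cong\bigl(\GL_k(\F_q)\times\Sp_{2m-2k}(\F_q)\bigr)\ltimes P_k$, and a direct conjugation of the displayed $P_k$-matrix by $\diag(A,B,Q_k(A^{-1})^TQ_k)$ produces the stated action. The main obstacle is the bookkeeping in the middle step: keeping track of signs and of the $Q_k$, $Q_{m-k}$ conjugations while expanding $g^TJg=J$ and $u^TJu=J$ with an antidiagonal $J$. One could shorten the identification of the Levi factor by appealing to the general Levi decomposition in groups with split $BN$-pairs, but pinning down $P_k$ with its explicit defining relations still requires this calculation.
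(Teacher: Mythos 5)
Your proof is correct, and it takes a genuinely different route from the paper's while arriving at the same place. The paper constructs $\maxpara_k$ from its explicit $BN$-pair generators ($B$ together with $\{n_1,\ldots,n_m\}\setminus\{n_k\}$) and reads off the structure by noting that removing node $k$ from the Coxeter graph leaves a diagram of type $A_{k-1}\times C_{m-k}$, with the three basis blocks $\{e_1,\ldots,e_k\}$, $\{e_{k+1},\ldots,f_{k+1}\}$, $\{f_k,\ldots,f_1\}$ carrying the $\GL_k$-, $\Sp_{2m-2k}$-, and dual-$\GL_k$-actions respectively, extended by $P_k$. You instead start from the characterisation of $\maxpara_k$ as the stabiliser of the isotropic subspace $E_k=\langle e_1,\ldots,e_k\rangle$ --- the same $BN$-pair dictionary, invoked at the level of subspaces rather than generators --- deduce the block-triangular shape from preservation of the flag $E_k\subset E_k^{\perp}\subset V$, and split off the Levi by factoring $g=\diag(D_1,D_2,D_3)\cdot u$ with $u$ unipotent. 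Both versions leave the expansion of the symplectic condition as a block-by-block computation, so neither is more detailed on that point; your version has a cleaner argument for the normality of $P_k$ (identified as the kernel of the action on $E_k\oplus(E_k^{\perp}/E_k)$, with triviality on $V/E_k^{\perp}$ forced by the symplectic duality), whereas the paper's asserts that the Levi normalises $P_k$ by a separate explicit calculation. Either organisation of the argument is acceptable, and I see no gaps in yours.
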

\begin{proof} We use the BN-pair for $\Sp_{2m}(\F_q)$ described in Chapter 8 of Taylor's book \cite{Taylor1992}.
  The Borel subgroup $B$ is the group of upper triangular symplectic matrices and $N$ is the group
  of symplectic monomial matrices. 
  The Weyl group is generated by reflections $w_1,\ldots,w_m$.
  For $i<m$, $w_i$ can be lifted to the element $n_i\in N$
which takes $e_i$ to $-e_{i+1}$, $e_{i+1}$ to $e_i$, $f_i$ to $-f_{i+1}$, $f_{i+1}$ to $f_i$ and fixes the other basis vectors. 
The generator $w_m$ lifts to $n_m\in N$ which takes  $e_m$ to $-f_m$, $f_m$ to $e_m$ and fixes the other basis elements.
The maximal parabolic subgroup $\maxpara_k$ is generated by $B$ and $\{n_1,\ldots,n_m\}\setminus \{n_k\}$.

Direct calculation verifies that $P_k$ is precisely the set of symplectic matrices of the given block-form and
that the embedding of $\GL_k(\F_q)\times \Sp_{2m-2k}(\F_q)$ gives the set of symplectic matrices of 
that corresponding block-form. A further explicit calculation verifies that the embedding of $\GL_k(\F_q)\times \Sp_{2m-2k}(\F_q)$ 
normalises $P_k$. The removal of vertex $k$ from the Coxeter graph gives a graph of type $A_{k-1}\times C_{m-k}$.
The removal of the generator $n_k$ separates  the basis vectors into three sets:
$\{e_1,\ldots,e_k\}$, $\{e_{k+1} ,\ldots, e_m,f_m,\ldots, f_{k+1}\}$, $\{f_k,\ldots,f_1\}$. 
We are left with the usual $BN$-pair for $\GL_k(\F_q)$ on the span of $\{e_1,\ldots,e_k\}$, the usual $BN$-pair for
$\Sp_{2m-2k}(\F_q)$ on the span of $\{e_{k+1} ,\ldots, e_m,f_m,\ldots, f_{k+1}\}$, the action of $\GL_k(\F_q)$ on
$\{f_k,\ldots,f_1\}$ determined by the symplectic condition, extended by the normal subgroup $P_k$ to recover the Borel subgroup.
\end{proof}

Let $U_k$ denote the span of $\{e_1,\ldots,e_k\}$ and let $\Sp_{2m}(\F_q)_{U_k}$ denote the pointwise stabiliser subgroup.

\begin{coro} \label{para_cor}The maximal parabolic subgroup $\maxpara_k$ is isomorphic to 
$$\GL_k(\F_q)\ltimes \Sp_{2m}(\F_q)_{U_k}$$
with the action of $\GL_k(\F_q)$ as described in Proposition~\ref{max_para_prop}.
\end{coro}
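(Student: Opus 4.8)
The strategy is to deduce the Corollary from Proposition~\ref{max_para_prop} by recognising $\Sp_{2m}(\F_q)_{U_k}$ as the subgroup $\Sp_{2m-2k}(\F_q)\ltimes P_k$ sitting inside $\maxpara_k$. First I would observe that $U_k={\rm Span}_{\F_q}\{e_1,\ldots,e_k\}$ is totally isotropic, since the upper-left $m\times m$ block of $J$ vanishes, and that $\maxpara_k$ equals the full setwise stabiliser $\Sp_{2m}(\F_q)_{\{U_k\}}$. For the latter I would use the $BN$-pair description recalled in the proof of Proposition~\ref{max_para_prop}: the Borel subgroup $B$ of upper-triangular symplectic matrices stabilises $U_k$; for $i\neq k$ the monomial lift $n_i$ stabilises $U_k$ (for $i<k$ it interchanges $e_i$ and $e_{i+1}$, both in $U_k$; for $i>k$ it moves only basis vectors indexed above $k$, including the case $i=m$ when $k<m$), whereas $n_k$ sends $e_k$ to $-e_{k+1}\notin U_k$. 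Hence $\maxpara_k=\langle B,\ n_i\ (i\neq k)\rangle\subseteq\Sp_{2m}(\F_q)_{\{U_k\}}$; since any overgroup of $B$ in a group with a $BN$-pair is a standard parabolic, $\Sp_{2m}(\F_q)_{\{U_k\}}$ is one, and as it is proper (it omits $n_k$) and contains the maximal parabolic $\maxpara_k$, the two coincide. In particular $\Sp_{2m}(\F_q)_{U_k}\subseteq\maxpara_k$.

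Next I would locate the pointwise stabiliser inside the semidirect product of Proposition~\ref{max_para_prop}. Reading off the block forms there: every matrix in $P_k$ fixes $e_1,\ldots,e_k$ (the leading $k\times k$ block is $I_k$ and the entries beneath it in the first $k$ columns are $0$); the image of $\{1\}\times\Sp_{2m-2k}(\F_q)$ is $\diag(I_k,B,I_k)$, which also fixes $e_1,\ldots,e_k$; and the image of $(A,1)$ restricts to $U_k$ as the matrix $A$. Consequently the element of $\maxpara_k$ corresponding to $((A,B),P)$ acts on $U_k$ precisely by $A$ (two of the three factors act trivially there), so it fixes $U_k$ pointwise if and only if $A=I_k$. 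Therefore $\Sp_{2m}(\F_q)_{U_k}$ is the image of $(\{1\}\times\Sp_{2m-2k}(\F_q))\ltimes P_k$, which is isomorphic to $\Sp_{2m-2k}(\F_q)\ltimes P_k$; being a pointwise stabiliser it is normal in the setwise stabiliser $\maxpara_k$.

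Finally I would assemble the semidirect product: the image $H$ of $\GL_k(\F_q)\times\{1\}$ (with trivial $P_k$-component) meets $\Sp_{2m}(\F_q)_{U_k}$ trivially, since $H$ acts faithfully on $U_k$ while $\Sp_{2m}(\F_q)_{U_k}$ acts trivially there, and $H\cdot\Sp_{2m}(\F_q)_{U_k}=\maxpara_k$ by the multiplication rule of Proposition~\ref{max_para_prop}; hence $\maxpara_k=H\ltimes\Sp_{2m}(\F_q)_{U_k}$. The conjugation action of $H\cong\GL_k(\F_q)$ on $\Sp_{2m}(\F_q)_{U_k}\cong\Sp_{2m-2k}(\F_q)\ltimes P_k$ is the restriction of the action in Proposition~\ref{max_para_prop} — trivial on the $\Sp_{2m-2k}(\F_q)$-factor and the stated action on $P_k$ — which is exactly the action claimed. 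The step requiring genuine care, and the main obstacle, is the identification $\maxpara_k=\Sp_{2m}(\F_q)_{\{U_k\}}$ of the first paragraph, which rests on tracking the monomial lifts $n_i$ against the Coxeter diagram of type $C_m$; the degenerate case $k=m$ (the Siegel parabolic, with $\Sp_{2m-2k}(\F_q)$ trivial) is checked directly and presents no difficulty.
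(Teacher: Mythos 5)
Your proof is correct, but it takes a genuinely different route from the paper. The paper's proof is a direct computation: one writes an element of $\Sp_{2m}(\F_q)_{U_k}$ in block form (the first $k$ columns are forced to be $\begin{smallmatrix}I_k\\0\\0\end{smallmatrix}$ by the pointwise-fixing condition), then applies the symplectic condition $M^TJM=J$ to deduce that the remaining blocks are exactly those of $(\{1\}\times\Sp_{2m-2k}(\F_q))\ltimes P_k$; matching this against the decomposition of $\maxpara_k$ in Proposition~\ref{max_para_prop} and re-associating the semidirect product finishes the argument. You instead avoid re-deriving anything from the symplectic condition: you first prove $\maxpara_k=\Sp_{2m}(\F_q)_{\{U_k\}}$ (the setwise stabiliser) by tracking the monomial lifts $n_i$ against the Coxeter graph and invoking the standard fact that an overgroup of a Borel in a $BN$-pair is a standard parabolic, and then locate the pointwise stabiliser as the subgroup $\{A=I_k\}$ of Proposition~\ref{max_para_prop}'s decomposition, reading the normality of $P_k$ and the block constraints off that proposition rather than recomputing them. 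The trade is essentially this: the paper pays with a second (implicit) block computation using the symplectic condition and gets a self-contained proof; you pay with the $BN$-pair argument and the fact that $\maxpara_k$ is maximal, but reuse the work already done in Proposition~\ref{max_para_prop}, and you establish along the way the (standard, useful) characterisation of $\maxpara_k$ as the setwise stabiliser of $U_k$ — a fact the paper never states. Both arguments are sound; do note that in your version the containment $\Sp_{2m}(\F_q)_{U_k}\subseteq\maxpara_k$ is not optional but load-bearing, since without it your reading of Proposition~\ref{max_para_prop} only identifies the pointwise stabiliser \emph{inside} $\maxpara_k$, not the full pointwise stabiliser in $\Sp_{2m}(\F_q)$.
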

\begin{proof} With our chosen basis, the matrices representing elements of $\Sp_{2m}(\F_q)_{U_k}$ are of the form
  $$\begin{pmatrix}
    I_k& A& B\\
    0  & C & D\\
    0  & E & F
  \end{pmatrix}$$
  where $\{A,E\}\subset\F_q^{k\times(2m-2k)}$, $\{B,F\}\subset\F_q^{k\times k}$, $C\in\F_q^{(2m-2k)\times(2m-2k)}$ and $D\in\F_q^{(2m-2k)\times k}$.
  The symplectic condition forces $E=0$, $F=I_k$ and $C\in\Sp_{2m-2k}(\F_q)$.
  If we restrict to $C=I_{2m-2k}$ and apply the symplectic condition,
  we recover $P_k$. Since the action of $\Sp_{2m-2k}(\F_q)$ normalises $P_k$, we see that
  $\Sp_{2m}(\F_q)_{U_k}$
  is isomorphic to $\Sp_{2m-2k}(\F_q)\ltimes P_k$, and the result follows from Proposition~\ref{max_para_prop}.
\end{proof}

\begin{notation} 
  For the rest of this section, we use $\widetilde{d}_{i,\ell}$ to denote the $i^{th}$ Dickson invariant in
  the first $\ell$ variables
taken from the ordered list $x_1,\ldots,x_m,y_m,\ldots,y_1$.
For example, $\widetilde{d}_{i,m}$ is the $i^{th}$ Dickson invariant in $x_1,\ldots, x_m$.
Let $W_k$ denote the span of $\{x_1,\ldots,x_m,y_m,\ldots, y_1\}\setminus\{y_1,\ldots,y_k\}$ and define 
$$N_{k}(t):=\prod_{v\in W_k}(t+v)=\sum_{j=0}^{2m-k}t^{p^{2m-k-j}}\widetilde{d}_{j,2m-k}\in\F_q[V][t].$$
Note that $N_k(x_i)=0$ and $N_k(y_i)=0$ if $i>k$.
\end{notation}

\begin{thm}\label{stab_sub_inv}
The ring of invariants $\F_q[V]^{\Sp_{2m}(\F_q)_{U_k}}$ is generated by 
$x_1,\ldots,x_k$, $\xi_1,\ldots,\xi_{2m-1}$, $N_k(y_1),\ldots, N_k(y_k)$, 
and $\widetilde{d}_{i,2m-k}$ for $i=1,2,\ldots,2m-k$.
Furthermore, this ring is Cohen-Macaulay and a complete intersection.
\end{thm}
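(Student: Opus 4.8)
The plan is to realize $\Sp_{2m}(\F_q)_{U_k}$ as a split polynomial gluing and read off the generators. By Corollary~\ref{para_cor} (or rather the intermediate observation in its proof) we have $\Sp_{2m}(\F_q)_{U_k}\cong \Sp_{2m-2k}(\F_q)\ltimes P_k$; but for the gluing picture the cleaner decomposition is $V=W_1\oplus W_2$, where I take $W_2$ to be the span of the basis vectors dual to $\{x_1,\ldots,x_m,y_m,\ldots,y_{k+1}\}$ (i.e. dual to $W_k$ above, so $\dim W_2=2m-k$) and $W_1$ the span of the vectors dual to $\{y_1,\ldots,y_k\}$ (so $\dim W_1=k$). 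First I would check that, with respect to this decomposition, the group $\Sp_{2m}(\F_q)_{U_k}$ has exactly the upper-triangular block shape of a gluing: the subgroup $P_k$ acts trivially on $W_2^*$ and sends each $y_i$ ($i\le k$) to $y_i+(\text{something in }W_k)$, so $P_k$ plays the role of $\M$, while the Levi part $\GL_k(\F_q)\times\Sp_{2m-2k}(\F_q)$ (from Proposition~\ref{max_para_prop}) plays the role of $G_1\times G_2$ with $G_1=\GL_k(\F_q)$ acting on $W_1$ and $G_2=\Sp_{2m-2k}(\F_q)$ acting on $W_2$.

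Next I would verify this is a \emph{split polynomial} gluing. Because every $P_k$-orbit $y_i+U_i$ has $U_i=W_k^*=W_2^*$ (this is the content of $N_k(x_j)=N_k(y_j)=0$ for $j>k$ together with the dimension count $|P_k|=q^{?}$ — I would confirm the orbit of each $y_i$ is a full translate by $W_k^*$), the orbit product $N_{P_k}(y_i)=N_k(y_i)$ has degree $q^{2m-k}$, and since $\prod_{i=1}^k q^{2m-k}\cdot 1^{2m-k}=q^{k(2m-k)}=|P_k|$ we get $\F_q[V]^{P_k}=\F_q[x_1,\ldots,x_m,y_m,\ldots,y_{k+1},N_k(y_1),\ldots,N_k(y_k)]$ as in Example~\ref{Fqexam}/Huang's Lemma; moreover any subgroup of $\GL(W_2)$ stabilises $W_2^*$, so $G_2=\Sp_{2m-2k}(\F_q)$ acts trivially on $A:=\F_q[N_k(y_1),\ldots,N_k(y_k)]$, giving the split condition. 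Hence
$$\F_q[V]^{\Sp_{2m}(\F_q)_{U_k}}=A^{\GL_k(\F_q)}\otimes\F_q[W_2]^{\Sp_{2m-2k}(\F_q)}.$$
For the second factor: $W_2$ is the defining representation of $\Sp_{2m-2k}(\F_q)$, so by Carlisle–Kropholler \cite[Theorem~8.3.11]{Ben1993} it is the complete intersection generated by the relevant $\xi$'s and Dickson invariants $d_{1,2m-2k},\ldots,d_{m-k,2m-2k}$ in the variables $x_1,\ldots,x_{m-k},y_{m-k},\ldots,y_1$ of $W_2$ — and I would match these up with the $\xi_i$ and $\widetilde d_{i,2m-k}$ appearing in the statement (the $\widetilde d_{i,2m-k}$ are the Dickson invariants of the full $\GL(W_k)$-action, but the Dickson invariants of $\Sp_{2m-2k}$ sit inside $\F_q[W_2]$, and one needs to reconcile the two lists — see below). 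For the first factor, $A\cong\F_q[W_1]$ via $y_i\mapsto N_k(y_i)$ as a $\GL_k$-equivariant map (Example~\ref{Fqexam}), so $A^{\GL_k(\F_q)}$ is generated by the images of the Dickson invariants of $\GL_k(\F_q)$, namely $d_{1,k}(N_k(y_1),\ldots,N_k(y_k)),\ldots,d_{k,k}(N_k(y_1),\ldots,N_k(y_k))$.

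The remaining task is bookkeeping: show that the combined generating set $\{d_{i,k}(N_k(y_j))\}\cup\{\xi\text{'s}\}\cup\{\widetilde d_{i,2m-k}\}$ is equivalent to (generates the same algebra as) the set listed in the theorem, $\{x_1,\ldots,x_k,\xi_1,\ldots,\xi_{2m-1},N_k(y_1),\ldots,N_k(y_k),\widetilde d_{1,2m-k},\ldots,\widetilde d_{2m-k,2m-k}\}$. Here I would use the standard recursion expressing Dickson invariants in $2m-k$ variables in terms of those in fewer variables together with orbit products, i.e. the identity $N_k(t)=\sum_j t^{p^{2m-k-j}}\widetilde d_{j,2m-k}$ combined with the analogous expansion of $\prod_{v\in W_k}(t+v)$ after adjoining the $y_i$'s one at a time; this lets one pass between the $\GL_k$-Dickson invariants of the $N_k(y_j)$ and the $\GL(W_k)$-Dickson invariants $\widetilde d_{i,2m-k}$ at the cost of including $x_1,\ldots,x_k$ and the $\xi_i$ among the generators (the $\xi_i$ encode exactly how the symplectic form on $U_k^\perp/U_k$ interacts with the $y$-variables). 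The Cohen–Macaulay and complete intersection claims are then immediate: by Theorem~\ref{Split_thm} and Theorem~\ref{prop_thm}(ii),(iii), $\F_q[V]^{\Sp_{2m}(\F_q)_{U_k}}$ is CM (resp. c.i.) iff both $A^{\GL_k(\F_q)}\cong\F_q[W_1]^{\GL_k(\F_q)}$ and $\F_q[W_2]^{\Sp_{2m-2k}(\F_q)}$ are — the former is polynomial (Dickson), hence c.i., and the latter is a complete intersection by Carlisle–Kropholler.

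The main obstacle I expect is the last bookkeeping step: verifying that the theorem's explicitly chosen generating set really does generate $A^{\GL_k(\F_q)}\otimes\F_q[W_2]^{\Sp_{2m-2k}(\F_q)}$ and no more. The subtlety is that $\widetilde d_{i,2m-k}$ for small $i$ are \emph{not} themselves $\Sp_{2m-2k}$-invariants of the right degree in an obvious way, and the $\xi_j$ for $j\ge 2m-2k$ involve the $y_i$ with $i\le k$ nontrivially; one has to check carefully that after inverting the gluing isomorphism $\psi$ (which is not degree-preserving) everything lands in the claimed subalgebra, and that the degree product matches $|\Sp_{2m}(\F_q)_{U_k}|$ so the set is not redundant. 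I would handle this by an explicit triangular-change-of-generators argument organized by the filtration of $W_k^*$ given by successively adjoining $y_k,y_{k-1},\ldots,y_1$, at each stage using the additivity of the orbit-product norms and Proposition~1.3 of \cite{Wil1983} to rewrite Dickson invariants, exactly as in the worked $\mathcal U(4,\F_p)$ example earlier in the paper.
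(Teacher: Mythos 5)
Your proposal goes wrong at the very first step: you have mis-identified the group. By Corollary~\ref{para_cor} and the proof of Proposition~\ref{max_para_prop}, $\Sp_{2m}(\F_q)_{U_k}\cong \Sp_{2m-2k}(\F_q)\ltimes P_k$ --- the $\GL_k(\F_q)$ factor lives in the maximal parabolic $\maxpara_k=\GL_k(\F_q)\ltimes\Sp_{2m}(\F_q)_{U_k}$, not in the pointwise stabiliser. Your displayed formula $\F_q[V]^{\Sp_{2m}(\F_q)_{U_k}}=A^{\GL_k(\F_q)}\otimes\F_q[W_2]^{\Sp_{2m-2k}(\F_q)}$ is therefore computing the invariants of the wrong (larger) group; if the gluing picture were valid at all, the correct quotient would only be by $\Sp_{2m-2k}(\F_q)$, with $G_1$ trivial.

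More fundamentally, $P_k$ cannot serve as the bimodule $\M$ of a gluing. The gluing construction requires $\M$ to be an additive subgroup of $\Hom_{\F}(W_2,W_1)$, hence abelian; but for $1\le k<m$ the symplectic condition $A^TQ_k-Q_kA=B_2^TQ_{m-k}B_1-B_1^TQ_{m-k}B_2$ makes $P_k$ a two-step nilpotent (Heisenberg-type) group, not abelian. Moreover the order count in your proposal is off: the constraints cut $P_k$ down to order $q^{k(4m-3k+1)/2}$, which is strictly less than $q^{k(2m-k)}=|\Hom_{\F_q}(W_k,U_k)|$ whenever $k\ge 2$. Consequently the degree product of $\{x_1,\ldots,x_m,y_m,\ldots,y_{k+1},N_k(y_1),\ldots,N_k(y_k)\}$ strictly exceeds $|P_k|$, so by the criterion you invoke (\cite[3.7.5]{DK2002}) this set is a homogeneous system of parameters but \emph{not} a generating set for $\F_q[V]^{P_k}$. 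This is precisely why the $\xi_i$ appear as additional generators in the statement and why the conclusion is ``complete intersection,'' not ``polynomial''; the gluing is not polynomial and Theorems~\ref{prop_thm} and \ref{Split_thm} do not apply. The paper instead deduces the Cohen--Macaulay and complete intersection properties from Kemper's Theorem~B for pointwise stabilisers and produces the generating set by Kemper's algorithm (\cite[Theorem 2.7]{Kem2002}) applied to the Dickson invariants and the $\xi_j$, an approach that sidesteps the non-polynomial structure of $\F_q[V]^{P_k}$ entirely.
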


\begin{proof} Since $\F_q[V]^{\Sp_{2m}(\F_q)}$ is Cohen-Macaulay and a complete intersection, it follows from
  \cite[Theorem B]{Kem2002} that $\F_q[V]^{\Sp_{2m}(\F_q)_{U_k}}$ is Cohen-Macaulay and a complete intersection.

  We will use Kemper's algorithm based on \cite[Theorem 2.7]{Kem2002} to compute a generating set
  for $\F_q[V]^{\Sp_{2m}(\F_q)_{U_k}}$.
  Given a $G$-invariant polynomial, Kemper's algorithm produces a set of $G_U$-invariant polynomials.
  The algorithm uses only the input polynomials and the subspace $U$.
  Applying the algorithm to $\{d_{i,2m}\mid i=1,\ldots, 2m\}$ produces a generating set for
  $\F_q[V]^{\GL_{2m}(\F_q)_{U_k}}$. By comparing the order of  $\GL_{2m}(\F_q)_{U_k}$ with product of the degrees,
  we see that $\F_q[V]^{\GL_{2m}(\F_q)_{U_k}}$ is the polynomial algebra generated by
  $\{N_k(y_i),\widetilde{d}_{j,2m-k}\mid i=1,\ldots,k;j=1,2,\ldots,2m-k\}$.
  Applying the algorithm to the $\xi_j$ produces $\{x_1,\ldots,x_k,\xi_1,\ldots,\xi_{2m-1}\}$.
 We do not claim that this is a minimal generating set.
\end{proof}

For $k=m$, we have
$$\Sp_{2m}(\F_q)_{U_m}=P_m=\left\{
\begin{pmatrix} 
I_m & A\\ 0& I_m \end{pmatrix} 
\mid A=Q_mA^TQ_m\right\},
$$
which is an elementary abelian $p$-group of order $q^{(m^2+m)/2}$.

\begin{coro}\label{eapg-cor} The ring of invariants $\F_q[V]^{P_m}$ is the complete intersection generated by
$x_1,\ldots,x_m$, $\xi_1,\ldots, \xi_{m-1}$, and $N_m(y_1),\ldots, N_m(y_m)$.
\end{coro}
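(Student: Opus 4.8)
The plan is to derive the corollary from Theorem~\ref{stab_sub_inv} by discarding superfluous generators. Specialising that theorem to $k=m$, the ring $\F_q[V]^{P_m}$ is generated by $x_1,\dots,x_m$, $\xi_1,\dots,\xi_{2m-1}$, $N_m(y_1),\dots,N_m(y_m)$, together with the Dickson invariants $\widetilde{d}_{1,m},\dots,\widetilde{d}_{m,m}$. The latter are polynomials in $x_1,\dots,x_m$, so they contribute nothing once $x_1,\dots,x_m$ are available. Setting $A:=\F_q[x_1,\dots,x_m,\xi_1,\dots,\xi_{m-1},N_m(y_1),\dots,N_m(y_m)]$, it therefore suffices to show that $\xi_i\in A$ for $m\le i\le 2m-1$; in fact the argument below gives $\xi_i\in A$ for every $i\ge m$.

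The engine of the proof is the shape of $N_m$. Since $W_m=\mathrm{Span}_{\F_q}\{x_1,\dots,x_m\}$ is an $\F_q$-subspace, the orbit product $N_m(t)=\prod_{v\in W_m}(t+v)$ is an $\F_q$-linear additive polynomial in $t$, hence of the form
$$N_m(t)=t^{q^{m}}+\sum_{j=0}^{m-1}b_j\,t^{q^{j}},\qquad b_j\in\F_q[x_1,\dots,x_m],$$
where the $b_j$ do not depend on the variable substituted for $t$. Putting $t=y_l$ gives the exact identity $y_l^{q^{m}}=N_m(y_l)-\sum_{j<m}b_j\,y_l^{q^{j}}$, and putting $t=x_l$, together with $N_m(x_l)=0$, gives $x_l^{q^{m}}=-\sum_{j<m}b_j\,x_l^{q^{j}}$. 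Applying the $q$-power Frobenius to these and re-reducing the single top term each time, an induction on $i$ produces, for all $i\ge m$ and all $l$,
$$y_l^{q^{i}}=\sum_{s=0}^{i-m}d_{i,s}\,N_m(y_l)^{q^{s}}+\sum_{j=0}^{m-1}c_{i,j}\,y_l^{q^{j}},\qquad x_l^{q^{i}}=\sum_{j=0}^{m-1}c_{i,j}\,x_l^{q^{j}},$$
with one common family of coefficients $d_{i,s},c_{i,j}\in\F_q[x_1,\dots,x_m]$ serving both identities; these coefficients are assembled from the $b_j$ alone and are independent of $l$.

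Substituting into $\xi_i=\sum_{l=1}^{m}\bigl(y_l^{q^{i}}x_l-y_l\,x_l^{q^{i}}\bigr)$, the terms carrying a factor $c_{i,j}$ reassemble as $\sum_l c_{i,j}\bigl(y_l^{q^{j}}x_l-y_l\,x_l^{q^{j}}\bigr)=c_{i,j}\,\xi_j$, so, using $\xi_0=0$,
$$\xi_i=\sum_{s=0}^{i-m}d_{i,s}\Bigl(\sum_{l=1}^{m}N_m(y_l)^{q^{s}}x_l\Bigr)+\sum_{j=1}^{m-1}c_{i,j}\,\xi_j .$$
Each inner sum $\sum_l N_m(y_l)^{q^{s}}x_l$ plainly lies in $\F_q[x_1,\dots,x_m,N_m(y_1),\dots,N_m(y_m)]$, so $\xi_i\in A$, which establishes $\F_q[V]^{P_m}=A$. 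That this ring is a complete intersection is immediate from Theorem~\ref{stab_sub_inv} applied with $k=m$; and a comparison of degrees and leading terms shows the displayed generators are minimal (each $N_m(y_l)$ has degree $q^{m}$ and each $\xi_j$ degree $q^{j}+1$, with a leading term in the $y$'s that cannot be produced from the other generators), so this is the expected complete-intersection presentation.

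The step I expect to be the crux is the observation that the Frobenius reductions of $y_l^{q^{i}}$ and of $x_l^{q^{i}}$ necessarily use the same coefficients $c_{i,j}$ — forced by $N_m(x_l)=0$ and by $b_j\in\F_q[x_1,\dots,x_m]$. Without this coincidence the linear-in-$y$ contributions $\sum_l y_l\,x_l^{q^{j}}$ would not cancel against $\sum_l y_l^{q^{j}}x_l$, and one could not re-express $\xi_i$ through the available invariants $\xi_1,\dots,\xi_{m-1}$; it is exactly this matching that makes the otherwise troublesome terms recombine into lower $\xi_j$'s.
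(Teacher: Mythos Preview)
Your proof is correct and follows the same overall strategy as the paper: specialise Theorem~\ref{stab_sub_inv} to $k=m$, drop the Dickson invariants $\widetilde{d}_{i,m}$ as polynomials in $x_1,\dots,x_m$, and show that $\xi_m,\dots,\xi_{2m-1}$ are redundant. The execution of the last step differs. The paper writes down the explicit identity
\[
\xi_m=\sum_{j=1}^m x_jN_m(y_j)-\sum_{i=1}^{m-1}\xi_i\,\widetilde{d}_{m-i,m}
\]
(essentially the $i=j=0$ case of Lemma~\ref{u_lem}) and then remarks that the analogous relations for $\xi_{m+1},\dots,\xi_{2m-1}$ follow by applying Steenrod operations. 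You instead set up a uniform Frobenius recursion, using $N_m(x_l)=0$ to force the \emph{same} coefficients $c_{i,j}$ in the reductions of $y_l^{q^i}$ and $x_l^{q^i}$, so that the cross terms reassemble into lower $\xi_j$'s. Your argument is more elementary and self-contained (no appeal to Steenrod operations or to the later Lemma~\ref{u_lem}); the paper's version has the advantage of producing the explicit closed formula in one line. The complete-intersection claim and the minimality remark are handled the same way in both.
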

\begin{proof}
  By Theorem \ref{stab_sub_inv},  $\F_q[V]^{P_m}$ is the complete intersection generated by
  $x_1,\ldots,x_m$, $\xi_1,\ldots,\xi_{2m-1}$, $N_m(y_1),\ldots, N_m(y_m)$, 
  and $\widetilde{d}_{i,m}$ for $i=1,2,\ldots,m$.
  Since  $\widetilde{d}_{i,m}\in\F_q[x_1,\ldots,x_m]$, these generators are redundant.
  An explicit calculation gives
  $$\xi_m=\left(\sum_{j=1}^m x_jN_m(y_j)\right)-\left(\sum_{i=1}^{m-1} \xi_id_{m-i,m}\right).$$
  Thus $\xi_m$ is also a redundant generator.
  Similar relations can be constructed to eliminate $\xi_j$ for $j>m$;
  one approach to doing this is to apply Steenrod operations to the relation for $\xi_m$.
  \end{proof}

\begin{rem} The above result is essentially Theorem 5.4.8 of \cite{Hussain2011}.
\end{rem}
  
The action of $\GL_k(\F_q)$ on $V$ is the direct sum of the action as a vector and a covector
on ${\rm Span}(e_1,\ldots,e_k)$ and ${\rm Span}(f_k,\ldots,f_1)$, in the sense of
\cite{BK2011} or \cite{CW2017}, and a trivial action on the remaining basis vectors.
%${\rm Span}(e_{k+1},\ldots,e_m,f_m,\ldots,f_{k+1})$. 
This means that we can use the results of
  \cite{CW2017} to compute generators for $\F_q[V]^{\GL_k(\F_q)}$. Our approach to the construction
  of generators for $\F_q[V]^{\maxpara_k}=\left(\F_q[V]^{\Sp_{2m}(\F_q)_{U_k}}\right)^{\GL_k(\F_q)}$ is to
  apply a ``diagonal gluing'' to the invariants of  $\F_q[V]^{\GL_k(\F_q)}$.
  It is useful to start with the case $k=m$, which is closely related to the Sylow $p$-subgroup of
  $\Sp_{2m}(\F_q)$.
We use $USp_{2m}$ to denote the group of upper-triangular unipotent symplectic matrices,
 a Sylow p-subgroup for $\Sp_{2m}(\F_q)$. In the following theorem, 
we use $N(x_i)$ to denote the product of $x_i+v$ as $v$ runs over the span of $x_1,\ldots,x_{i-1}$.

\begin{thm} \label{sylow_thm}The ring of invariants $\F_q[V]^{USp_{2m}}$ is the complete intersection generated by\\
$\xi_1,\ldots,\xi_{2m-2}$, $N(x_1),\ldots,N(x_m)$ and $N_m(y_m),N_{m-1}(y_{m-1}),\ldots, N_1(y_1)$.
\end{thm}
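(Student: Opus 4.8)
The plan is to realize $USp_{2m}$ as an iterated gluing and then assemble the invariants from the pieces, exactly as the surrounding text sets up. The key structural observation is that $USp_{2m}$ sits inside $\maxpara_m$, which by Corollary~\ref{para_cor} is $\GL_m(\F_q)\ltimes P_m$, and the Sylow $p$-subgroup of $\maxpara_m$ is $U_m(\F_q)\ltimes P_m$, where $U_m(\F_q)\subset\GL_m(\F_q)$ is the upper-triangular unipotent group. Since a Sylow $p$-subgroup of $\Sp_{2m}(\F_q)$ is contained in the maximal parabolic $\maxpara_m$, we have $USp_{2m}\cong U_m(\F_q)\ltimes P_m$. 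Now $P_m$ is the elementary abelian group described before Corollary~\ref{eapg-cor}, and the action of $U_m(\F_q)$ on $V$ decomposes (as noted just before the theorem) as a vector representation on $\mathrm{Span}(e_1,\dots,e_m)$, a covector representation on $\mathrm{Span}(f_m,\dots,f_1)$, and trivially elsewhere. So first I would set up the situation as a gluing: write $V = W_1\oplus W_2$ with $W_2 = \mathrm{Span}(e_1,\dots,e_m)$ carrying the $\GL_m$-action (so $W_2^* = \mathrm{Span}(x_1,\dots,x_m)$), and $W_1$ the complementary span, with $\M = P_m$ playing the role of the gluing module. Then $\F_q[V]^{USp_{2m}} = \bigl(\F_q[V]^{P_m}\bigr)^{U_m(\F_q)}$.

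**The heart of the computation** is to identify $\F_q[V]^{P_m}$ — given by Corollary~\ref{eapg-cor} as the complete intersection on $x_1,\dots,x_m$, $\xi_1,\dots,\xi_{m-1}$, $N_m(y_1),\dots,N_m(y_m)$ — and then take the further invariants under the $U_m(\F_q)$-action. Here the plan is to exploit that $U_m(\F_q)$ acts on $\F_q[x_1,\dots,x_m]$ as the full upper-triangular unipotent group on a polynomial ring, whose invariant ring is the classical $\F_q[N(x_1),\dots,N(x_m)]$ (the "polynomial gluing / Mui" picture that motivates the whole paper). The subalgebra $\F_q[x_1,\dots,x_m]\subset\F_q[V]^{P_m}$ is exactly $\F_q[W_2]$ in the gluing identification, so this is the "$G_2$"-part. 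Meanwhile $U_m(\F_q)$ stabilizes the span $W_m$ of $\{x_1,\dots,x_m,y_m,\dots,y_{m+1}\}\setminus\{y_1,\dots,y_m\}$ — wait, more carefully: one needs that the $N_k(y_k)$ appearing in the statement are genuinely $U_m$-invariant. Here I would use the additivity and stabilizer properties of the $N_\M(y_j)$ recorded in Section~\ref{glue_constr}: each $N_k(y_k)$ is an orbit product over an $\F_p$-subspace, and is invariant under any subgroup of $\GL$ stabilizing that subspace. The indexing $N_m(y_m), N_{m-1}(y_{m-1}),\dots,N_1(y_1)$ is chosen precisely so that $N_{m-j+1}(y_{m-j+1})$ absorbs the "extra" covector-translation directions introduced at stage $j$, making it $U_m$-invariant even though the individual $N_m(y_j)$ are not. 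So the key lemma to nail down is: the $U_m(\F_q)$-invariants of $\F_q[V]^{P_m}$ are generated by $\xi_1,\dots,\xi_{2m-2}$ together with $N(x_1),\dots,N(x_m)$ and $N_m(y_m),\dots,N_1(y_1)$.

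**To prove that lemma** I would run a degree/order count, which is the paper's standard device for confirming a generating set: compute $|USp_{2m}|$ (a known $p$-power, $q^{m^2}$), compute the product of the degrees of the proposed generators, and check equality — but this only works directly for polynomial rings, and $\F_q[V]^{USp_{2m}}$ is a complete intersection, not polynomial, so instead I would argue as follows. First, $\F_q[V]^{\Sp_{2m}(\F_q)}$ is a complete intersection (Carlisle–Kropholler), hence by Kemper's theorem \cite{Kem2002} so is $\F_q[V]^{USp_{2m}}$ after passing to the parabolic and then the Sylow subgroup — actually the cleanest route is: the $\xi_i$'s and Dickson invariants restrict/specialize, and the relations among the $\xi_i$ (one of which is exhibited explicitly in the proof of Corollary~\ref{eapg-cor}, namely $\xi_m = \sum_j x_j N_m(y_j) - \sum_i \xi_i d_{m-i,m}$, with analogues for $j>m$ via Steenrod operations) let us cut the list $\xi_1,\dots,\xi_{2m-1}$ down to $\xi_1,\dots,\xi_{2m-2}$; the Dickson invariants $\widetilde d_{i,m}\in\F_q[x_1,\dots,x_m]^{U_m} = \F_q[N(x_1),\dots,N(x_m)]$ become redundant once $N(x_1),\dots,N(x_m)$ are present. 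Then verify that $\xi_1,\dots,\xi_{2m-2}, N(x_1),\dots,N(x_m), N_m(y_m),\dots,N_1(y_1)$ is a homogeneous system of parameters (an $hsop$ with $2m$ elements on the $2m$-dimensional ring) — this follows because the radical of the ideal they generate contains all the variables, which one checks by triangularity of the $N$'s and of the $\xi_i$ on the appropriate flag. Finally, an $hsop$ plus the complete-intersection property forces these to generate, by the standard count: a complete intersection on $2m+(2m-2-?)$ generators... here I should instead invoke that a complete-intersection ring whose given generators include an $hsop$ and whose number of generators matches $\dim + \dim H_1$ must be minimally generated by that set (Proposition~\ref{CI-char}), so I would count the number of relations. \textbf{The main obstacle} I anticipate is this last bookkeeping: proving that the proposed list is actually a \emph{minimal} generating set and that nothing further is needed — i.e., controlling $\dim_\F H_1$ — because the interaction between the $\xi_i$-relations (inherited, via Steenrod operations, from the symplectic relations) and the new $N(x_i)$-generators is delicate, and one must be sure the $U_m$-averaging of the complete-intersection relations in Corollary~\ref{eapg-cor} does not spawn extra independent relations. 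I would handle this by tracking everything through the split/polynomial gluing isomorphism of Theorem~\ref{Split_thm} applied to $W_1\oplus W_2$ with $\M=P_m$, reducing the claim to the already-understood invariant theory of the vector-covector $\GL_m(\F_q)$-representation from \cite{CW2017} on one factor and the trivial-plus-$\xi$ structure on the other.
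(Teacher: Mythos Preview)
Your high-level strategy matches the paper's: write $USp_{2m}\cong H\ltimes P_m$ with $H=U_m(\F_q)$, take $P_m$-invariants first via Corollary~\ref{eapg-cor}, then take $H$-invariants. But your proposed execution has two genuine gaps.

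First, the gluing with $\mathcal{M}=P_m$ is \emph{not} a polynomial or split gluing in the sense of Section~\ref{glue_constr}: $\F_q[V]^{P_m}$ is a complete intersection, not a polynomial ring, so Theorem~\ref{Split_thm} does not apply and cannot resolve your ``main obstacle.'' What the paper actually uses is more specific: $\F_q[V]^{P_m}$ is a \emph{free module} over the polynomial subring $S=\F_q[x_1,\ldots,x_m,N_m(y_1),\ldots,N_m(y_m)]$, with module generators that are monomials in the $\xi_i$ --- and the $\xi_i$ are already $H$-invariant. So $(\F_q[V]^{P_m})^H$ is generated by the $\xi_i$ together with a generating set for $S^H$. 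Now the substitution $y_i\mapsto N_m(y_i)$ gives an $H$-equivariant algebra isomorphism $\F_q[y_1,\ldots,y_m,x_1,\ldots,x_m]\to S$, and the $H$-action on the source is precisely the vector--covector action for the \emph{unipotent} group, whose invariants are given by Bonnaf\'e--Kemper \cite{BK2011} (not \cite{CW2017}, which treats the full $\GL_m$). Pushing their generators through the isomorphism gives $N(x_i)$, $N_H(N_m(y_i))=N_i(y_i)$, and the invariants $\widetilde u_j$.

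Second, the redundancy of the $\widetilde u_j$ is the crux, and it is not just a matter of ``Dickson invariants become redundant.'' The paper shows (this is Lemma~\ref{u_lem}) that each $\widetilde u_j$ lies in the subalgebra generated by the $\widetilde d_{i,m}$ and the $\xi_i$ --- but this requires $\xi_1,\ldots,\xi_{2m-2}$, not just $\xi_1,\ldots,\xi_{m-1}$, which is why the final generating set contains more $\xi_i$ than Corollary~\ref{eapg-cor} does. The complete-intersection conclusion then comes not from a $\dim H_1$ count via Proposition~\ref{CI-char} (you would have no independent way to compute $\dim H_1$) but from a direct module-generator count: translate the Bonnaf\'e--Kemper \emph{relations} through the isomorphism to bound the number of $\xi$-monomials needed as module generators over $A=\F_q[N(x_i),N_j(y_j)]$, and compare $|\mathcal{H}|\cdot(\text{\# module generators})$ with $|USp_{2m}|$.
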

\begin{proof} Let $H$ denote the upper triangular unipotent subgroup of $\GL_m(\F_q)$. 
Observe that $USp_{2m}$ is isomorphic to $H\ltimes P_m$ with the action given by restricting the action of $\GL_m(\F_q)$ 
given in Proposition \ref{max_para_prop}
to the subgroup $H$. Furthermore, this restriction gives a vector/covector action of $H$ on $V$.
  Bonnaf\'e and Kemper \cite{BK2011} showed that $\F_q[V]^H$ is a complete intersection generated by
  $N_H(y_i)$, $N(x_i)$ for $i=1,\ldots,m$ and additional invariants which they denote by $u_j$ for
  $j=2-m,\ldots,m-2$. Let $S$ denote the algebra generated by $x_1,\ldots,x_m$, $N_m(y_1),\ldots,N_m(y_m)$.
  Referring to Corollary \ref{eapg-cor}, we see that $\{x_1,\ldots,x_m$, $N_m(y_1),\ldots,N_m(y_m)\}$ 
  is a homogeneous system of parameters for  $\F_q[V]^{P_m}$ and that $\F_q[V]^{P_m}$
  is a finitely generated free $S$-module. Furthermore, the $S$-module generators can be taken to be monomials in the $\xi_i$.
  Therefore, since the $\xi_i$ are $H$-invariant, adjoining the $\xi_i$ to a generating set for $S^H$ gives a generating set for
  $\F_q[V]^{USp_{2m}}=(\F_q[V]^{P_m})^H$.
  The algebra homomorphism from $\F_q[V]$ to $S$ which fixes $x_i$ and maps $y_i$ to $N_m(y_i)$ is an $H$-equivariant
  isomorphism of algebras. Thus we can construct generators for $S^H$ by substituting $N_m(y_i)$ for $y_i$ in the generators for
  $\F_q[V]^H$. Note that $N_H(N_m(y_i))=N_i(y_i)$. Substituting $N_m(y_i)$ for $y_i$  in $u_j$ gives 
  \begin{eqnarray*}
  \widetilde{u}_0&=&\sum_{i=1}^m x_iN_m(y_i)\\
  \widetilde{u}_j&=&\sum_{i=1}^m x_i^{q^j} N_m(y_i)\\
  \widetilde{u}_{-j}&=&\sum_{i=1}^m x_iN_m(y_i)^{q^j}
  \end{eqnarray*}
  for $j=1,\ldots,m$. These invariants can be written in terms of the $\xi_i$ and the Dickson invariants in $x_1,\ldots,x_m$:
   \begin{eqnarray*}
  \widetilde{u}_0&=&\sum_{i=1}^m \xi_i\widetilde{d}_{m-i,m}\\
  \widetilde{u}_j&=&\sum_{i=0}^{m-j-1} \xi_{m-i-j}^{q^j} \widetilde{d}_{i,m}-\sum_{i=m-j+1}^m\xi_{i+j-m}^{q^{m-i}}\widetilde{d}_{i,m}\\
  \widetilde{u}_{-j}&=&\sum_{i=0}^m \xi_{m+j-i}\widetilde{d}_{i,m}^{q^j}
  \end{eqnarray*}
   for $j=1,\ldots, m$; see Lemma \ref{u_lem} below for details.
   %Note that $\F_q[V]^{USp_{2m}}=(\F_q[V]^{P_m})^H$. It follows from Corollary \ref{eapg-cor}
   %that we can construct a  generating set for $\F_q[V]^{USp_{2m}}$ by adjoining $\{\xi_1,\ldots,\xi_{m-1}\}$ to a generating set for $S^H$.
   Since the Dickson invariants $\widetilde{d}_{i,m}$ can be written as polynomials in $N(x_1),\ldots,N(x_m)$,
   the $\widetilde{u}_{2-m},\ldots,\widetilde{u}_{m-2}$ are redundant, as long as we include $\xi_1,\ldots,\xi_{2m-2}$ in our generating set
   (the invariants $\widetilde{u}_{-1}$, $\widetilde{u}_0$, $\widetilde{u}_1$ require $\xi_{2m-1}$ to rewrite).
   Define $\mathcal{H}:=\{N(x_1),\ldots,N(x_m),N_m(y_m),\ldots, N_1(y_1) \}$ and let $A$ denote the algebra generated $\mathcal{H}$. 
   Note that $A$ is the ring of invariants  for the upper triangular unipotent subgroup of $\GL_{2m}(\F_q)$, each $d_{i,2m}\in A$ and $\mathcal{H}$
   is a homogeneous system of parameters. Furthermore, $\F_q[V]^{USp_{2m}}$ is a finite $A$-module and the module generators can be 
   taken to be monomials in the $\xi_i$. 
   The Bonnaf\'e-Kemper relations \cite[page 105]{BK2011} translate into relations which allow us to rewrite powers of the $\xi_i$
   in terms of elements of $A$ and powers of $\xi_j$ of lower total degree.
   In particular, the translation of  relation $R_{m-j}$  rewrites $\xi_{2j+1}^{q^{m-j-1}}$ for $j=0,\ldots,m-2$, the translation of 
   $R_{m-j}^-$ rewrites $\xi_{2j}^{q^{m-j}}$ for $j=1,\ldots, m-2$ and the $R_1^+$ rewrites $\xi_{2m-2}^q$; compare with \cite[Theorem 2.4]{BK2011}.
   Using these relations, we see that 
   $\{\xi_1^{a_1}\cdots \xi_{2m-2}^{a_{2m-2}}\mid a_i<q^{\lceil{m-i/2}\rceil}\}$ is a set of $A$-module generators
   for $\F_q[V]^{USp_{2m}}$. Observe that the product of the degrees of the elements in $\mathcal{H}$ is $q^{m(2m-1)}$, the 
   order of $USp_{2m}$ is $q^{m^2}$ and the number of module generators is $q^{m(m-1)}$. Therefore $\F_q[V]^{USp_{2m}}$
   is Cohen-Macaulay, see \cite[Theorem 3.7.1]{DK2002} or \cite[Corollary 3.1.4]{CW2011}. Hence the rewriting relations generate the ideal of relations
   and $\F_q[V]^{USp_{2m}}$ is a complete intersection.
   \end{proof}
   
The above is consistent with the calculation of $\F_q[V]^{USp_4}$ in \cite{FF2017}.
Since $USp_{2m}$ is a Sylow $p$-subgroup for all of the standard parabolic subgroups of $\Sp_{2m}(\F_q)$, the following is a consequence of
\cite{CHP1991}.

\begin{coro} If $G$ is any parabolic subgroup of $\Sp_{2m}(\F_q)$, then $\F_q[V]^G$ is Cohen-Macaulay.
\end{coro}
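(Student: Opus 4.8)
The plan is to deduce this from Theorem~\ref{sylow_thm} together with the result of Campbell, Hughes and Pollack \cite{CHP1991}: if $P$ is a Sylow $p$-subgroup of a finite group $G$ and $\F_q[V]^P$ is Cohen--Macaulay, then so is $\F_q[V]^G$. Thus the work reduces to locating, inside each parabolic subgroup of $\Sp_{2m}(\F_q)$, a Sylow $p$-subgroup whose ring of invariants we already control.

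First I would note that every parabolic subgroup $G$ of $\Sp_{2m}(\F_q)$ contains a Borel subgroup, and each Borel subgroup contains its unipotent radical, which is a Sylow $p$-subgroup of $\Sp_{2m}(\F_q)$ (the order $q^{m^2}$ of $USp_{2m}$ is exactly the $p$-part of $|\Sp_{2m}(\F_q)|$). So $G$ contains some Sylow $p$-subgroup $P$ of $\Sp_{2m}(\F_q)$, and since all such subgroups are conjugate we may write $P=g\,USp_{2m}\,g^{-1}$ for some $g\in\Sp_{2m}(\F_q)$. The right action of $g^{-1}$ on $\F_q[V]$ is a graded $\F_q$-algebra automorphism carrying $\F_q[V]^{USp_{2m}}$ isomorphically onto $\F_q[V]^{P}$; by Theorem~\ref{sylow_thm} the former is a complete intersection, so $\F_q[V]^{P}$ is Cohen--Macaulay. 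As $P$ is then a Sylow $p$-subgroup of $G$, \cite{CHP1991} gives that $\F_q[V]^G$ is Cohen--Macaulay.

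If one prefers not to invoke \cite{CHP1991} as a black box, the underlying argument is brief: the relative transfer $\Tr^{G}_{P}$, scaled by $[G:P]$ --- a unit in $\F_q$ because the index is prime to $p$ --- splits the inclusion $\F_q[V]^{G}\hookrightarrow\F_q[V]^{P}$ as a map of $\F_q[V]^{G}$-modules, so $\F_q[V]^{G}$ is a graded direct summand of $\F_q[V]^{P}$. Choosing a homogeneous system of parameters for $\F_q[V]^{G}$, which is also a homogeneous system of parameters for the module-finite extension $\F_q[V]^{P}$, the Cohen--Macaulayness of $\F_q[V]^{P}$ makes it a free module over the polynomial subring they generate; a graded direct summand of a free module over a polynomial ring is free, so $\F_q[V]^{G}$ is free over that subring and hence Cohen--Macaulay. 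There is no genuine obstacle beyond Theorem~\ref{sylow_thm}: the only point needing care is that $[G:P]$ is prime to $p$, which is automatic once $P$ is a Sylow $p$-subgroup of the ambient symplectic group.
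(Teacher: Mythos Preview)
Your argument is correct and follows essentially the same route as the paper: Theorem~\ref{sylow_thm} gives that $\F_q[V]^{USp_{2m}}$ is Cohen--Macaulay, and since every parabolic subgroup of $\Sp_{2m}(\F_q)$ contains a conjugate of $USp_{2m}$ as a Sylow $p$-subgroup, the result follows from \cite{CHP1991}. Your additional paragraph unpacking the relative transfer argument behind \cite{CHP1991} is correct and a helpful elaboration, but not a departure from the paper's method.
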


In the following, it is useful to take $\xi_0=0$ and $\xi_{-i}^{q^j}=-\xi_i^{q^{j-i}}$ for $j\geq i>0$.

\begin{lem} \label{u_lem} For $k\leq m$ and $i,j\geq 0$, we have
$$\sum_{s=1}^kx_s^{q^i}N_k(y_s)^{q^j}=\sum^{2m-k}_{\ell=0}\xi^{q^i}_{2m-k-i-\ell+j}\widetilde{d}^{q^j}_{\ell,2m-k}.$$
\end{lem}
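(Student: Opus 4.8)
The plan is to expand the right-hand side, replace each twisted $\xi$ by an explicit sum of monomials, interchange the order of the two summations, and then recognise the resulting inner sums as Frobenius twists of the polynomials $N_k(y_s)$ and $N_k(x_s)$; the identity then drops out from the two vanishing observations $N_k(x_s)=0$ for all $s$ and $N_k(y_s)=0$ for $s>k$.

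The first step is to prove a uniform formula for the twisted $\xi$'s occurring on the right. Set $c:=2m-k-i-\ell+j$. I claim that for every $\ell$ with $0\le\ell\le 2m-k$ and all $i,j\ge 0$,
$$\xi_c^{q^i}=\sum_{s=1}^m\left(y_s^{q^{\,2m-k-\ell+j}}x_s^{q^i}-y_s^{q^i}x_s^{q^{\,2m-k-\ell+j}}\right).$$
When $c\ge 0$ this is immediate from $\xi_c=\sum_{s=1}^m(y_s^{q^c}x_s-y_sx_s^{q^c})$ upon raising it to the $q^i$ power, since $c+i=2m-k-\ell+j$; the case $c=0$ uses $\xi_0=0$. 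When $c<0$ I put $b:=-c>0$; because $\ell\le 2m-k$ one has $0<b\le i$ and $i-b=2m-k-\ell+j\ge 0$, so the convention $\xi_{-b}^{q^i}=-\xi_b^{q^{i-b}}$ is legitimate and expands to exactly the same expression. Verifying that the negative-index conventions really apply in every case that can arise — i.e. that the auxiliary exponents $i-b$ and $j-i$ that they require are genuinely nonnegative — is the part that needs the most care, and it is precisely the reason the conventions $\xi_0=0$ and $\xi_{-i}^{q^j}=-\xi_i^{q^{j-i}}$ were recorded immediately before the statement.

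Granting the uniform formula, I would substitute it into the right-hand side and collect the coefficients of $x_s^{q^i}$ and of $y_s^{q^i}$, obtaining
$$\sum_{\ell=0}^{2m-k}\xi_c^{q^i}\widetilde d_{\ell,2m-k}^{q^j}
=\sum_{s=1}^m x_s^{q^i}\sum_{\ell=0}^{2m-k}y_s^{q^{\,2m-k-\ell+j}}\widetilde d_{\ell,2m-k}^{q^j}
-\sum_{s=1}^m y_s^{q^i}\sum_{\ell=0}^{2m-k}x_s^{q^{\,2m-k-\ell+j}}\widetilde d_{\ell,2m-k}^{q^j}.$$
By the defining expansion of $N_k$ together with the additivity of the Frobenius, the two inner sums are $N_k(y_s)^{q^j}$ and $N_k(x_s)^{q^j}$. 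Since each $x_s$ lies in the span $W_k$, we have $N_k(x_s)=0$, so the second double sum vanishes; and since $y_s\in W_k$ for $s>k$, the terms with $s>k$ drop out of the first sum, leaving exactly $\sum_{s=1}^k x_s^{q^i}N_k(y_s)^{q^j}$, which is the left-hand side. Beyond the bookkeeping in the first step, nothing further should be needed: the remainder is an interchange of finite sums together with these two vanishing facts.
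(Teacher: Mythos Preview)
Your proof is correct and follows essentially the same route as the paper: expand the right-hand side, use the definition of $\xi$ (together with the negative-index convention) to get a uniform monomial expression, swap the sums, and recognise the inner $\ell$-sums as $N_k(y_s)^{q^j}$ and $N_k(x_s)^{q^j}$, finishing with the two vanishing observations. The only difference is that you are more explicit than the paper in checking that the convention $\xi_{-b}^{q^i}=-\xi_b^{q^{i-b}}$ is legitimately applicable (i.e.\ that $i-b=2m-k-\ell+j\ge 0$ whenever $c<0$), which is a welcome bit of rigour the paper glosses over.
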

\begin{proof}
We expand the right hand side giving
\begin{eqnarray*}
   \sum^{2m-k}_{\ell=0}\xi^{q^i}_{2m-k-i-\ell+j}\widetilde{d}^{q^j}_{\ell,2m-k}
   &=&\sum^{2m-k}_{\ell=0}\left(\sum_{s=1}^m\left(x_sy_s^{q^{2m-k-i-\ell+j}}   -y_sx_s^{q^{2m-k-i-\ell+j}}\right)\right)^{q^i}\widetilde{d}^{q^j}_{\ell,2m-k}\\
   &=&\sum^{2m-k}_{\ell=0}\left(\sum_{s=1}^mx_s^{q^i}y_s^{2m-k-\ell+j}-y^{q^i}_sx_s^{2m-k-\ell+j}\right)\widetilde{d}^{q^j}_{\ell,2m-k}\\
   &=&\sum_{s=1}^m(x_s^{q^i}N_k(y_s)^{q^j}-y_s^{q^i}N_k(x_s)^{q^j})\\
   &=&\sum_{s=1}^kx_s^{q^i}N_k(y_s)^{q^j},
 \end{eqnarray*}
as required.
\end{proof}

\begin{thm} \label{max_para_gen}The ring of invariants $\F_q[V]^{\maxpara_k}$ is generated by
$\xi_1,\ldots,\xi_{2m-1}$, $d_{1,2m},\ldots,d_{k,2m}$, $\widetilde{d}_{1,k},\ldots,\widetilde{d}_{k,k}$
and $\widetilde{d}_{1,2m-k},\ldots,\widetilde{d}_{m-k,2m-k}$. 
%Furthermore, this ring is Cohen-Macaulay.
\end{thm}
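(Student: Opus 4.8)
The plan is to realise $\maxpara_k$ as $\GL_k(\F_q)\ltimes\Sp_{2m}(\F_q)_{U_k}$ via Corollary~\ref{para_cor}, so that $\F_q[V]^{\maxpara_k}=\bigl(\F_q[V]^{\Sp_{2m}(\F_q)_{U_k}}\bigr)^{\GL_k(\F_q)}$, and to combine this with the generating set for $R:=\F_q[V]^{\Sp_{2m}(\F_q)_{U_k}}$ given by Theorem~\ref{stab_sub_inv}. First I would verify that each listed element is $\maxpara_k$-invariant: the $\xi_i$ are $\Sp_{2m}(\F_q)$-invariant and the $d_{i,2m}$ are $\GL_{2m}(\F_q)$-invariant, so both are fixed by $\maxpara_k$; and the Dickson invariants $\widetilde d_{i,k}$, $\widetilde d_{i,2m-k}$ are fixed because the subspaces $\mathrm{Span}_{\F_q}(x_1,\ldots,x_k)$ and $W_k=U_k^{\perp}$ of $V^{*}$ are stable under $\GL_k(\F_q)$ and under $\Sp_{2m}(\F_q)_{U_k}$ (the latter acts trivially on $V^{*}/U_k^{\perp}$, hence preserves $W_k$), with both groups acting through the full general linear group of the relevant subspace.

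Next I would examine the residual $\GL_k(\F_q)$-action through the generators of $R$ listed in Theorem~\ref{stab_sub_inv}. Here $\GL_k(\F_q)$ fixes $\xi_1,\ldots,\xi_{2m-1}$ and $\widetilde d_{1,2m-k},\ldots,\widetilde d_{2m-k,2m-k}$, acts on $x_1,\ldots,x_k$ through the covector representation, and, since $N_k$ is $\F_q$-linear and commutes with the group action (which fixes $W_k$ and is linear on the $y_j$), acts on $N_k(y_1),\ldots,N_k(y_k)$ through the contragredient representation. Thus $R$ presents itself as a diagonal gluing of the $\GL_k(\F_q)$-stable polynomial subalgebra $\F_q[x_1,\ldots,x_k,N_k(y_1),\ldots,N_k(y_k)]$ — a copy of the vector–covector representation of $\GL_k(\F_q)$ in the sense of \cite{CW2017} — with the $\GL_k(\F_q)$-fixed subalgebra generated by the $\xi_i$ and the $\widetilde d_{j,2m-k}$. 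By \cite{CW2017} (compare \cite{BK2011}) the vector–covector invariant ring is generated by the Dickson invariants $\widetilde d_{1,k},\ldots,\widetilde d_{k,k}$ in the $x_i$, the Dickson invariants $e_i:=d_i(N_k(y_1),\ldots,N_k(y_k))$, and the mixed invariants $\sum_{s=1}^{k}x_s^{q^{a}}N_k(y_s)^{q^{b}}$.

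It then remains to rewrite these generators inside $R$. By Lemma~\ref{u_lem} every mixed invariant $\sum_{s=1}^{k}x_s^{q^{a}}N_k(y_s)^{q^{b}}$ lies in $\F_q[\xi_1,\ldots,\xi_{2m-1},\widetilde d_{1,2m-k},\ldots,\widetilde d_{2m-k,2m-k}]$, so the mixed invariants produce nothing new. For the $e_i$ I would use the Dickson identity attached to the decomposition $V^{*}=W_k\oplus\mathrm{Span}_{\F_q}(y_1,\ldots,y_k)$,
$$\sum_{i=0}^{2m}t^{q^{2m-i}}d_{i,2m}=\prod_{v\in V^{*}}(t+v)=\prod_{(a_1,\ldots,a_k)\in\F_q^{k}}\Bigl(N_k(t)+\sum_{j=1}^{k}a_jN_k(y_j)\Bigr)=\sum_{i=0}^{k}N_k(t)^{q^{k-i}}e_i,$$
together with $N_k(t)=\sum_{j}t^{q^{2m-k-j}}\widetilde d_{j,2m-k}$; comparing coefficients of $t^{q^{2m-i}}$ for $i=1,\ldots,k$ solves recursively for $e_1,\ldots,e_k$ in terms of $d_{1,2m},\ldots,d_{k,2m}$ and the $\widetilde d_{j,2m-k}$, and conversely shows $d_{k+1,2m},\ldots,d_{2m,2m}$ are redundant. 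Finally, the redundancy of $\widetilde d_{m-k+1,2m-k},\ldots,\widetilde d_{2m-k,2m-k}$ should follow by combining the analogous Dickson identity for $W_k=\mathrm{Span}_{\F_q}(x_1,\ldots,x_k)\oplus W''$ with the symplectic relations of Carlisle–Kropholler \cite[\S8]{Ben1993} on the middle block $\Sp_{2m-2k}(\F_q)$ and the $\xi$-relations of Lemma~\ref{u_lem} (the $a=b=0$ case giving $\xi_{2m-k}=\sum_{s=1}^{k}x_sN_k(y_s)-\sum_{\ell\ge1}\xi_{2m-k-\ell}\widetilde d_{\ell,2m-k}$, as in Corollary~\ref{eapg-cor}); this is the point at which only the first $m-k$ of these Dickson invariants survive.

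To close, I would check that nothing further is needed by a dimension count: $\F_q[V]^{\maxpara_k}$ is Cohen–Macaulay, being the ring of invariants of a parabolic subgroup of $\Sp_{2m}(\F_q)$ (cf. \cite{CHP1991}), so after extracting a homogeneous system of parameters from the listed generators it is a free module over that system, and matching the product of the degrees of the system and the number of module generators against $|\maxpara_k|$ forces the list to generate — and exhibits $\F_q[V]^{\maxpara_k}$ as a complete intersection, in line with Theorem~\ref{stab_sub_inv}. The principal obstacle will be making the diagonal-gluing step rigorous — controlling how the non-$\GL_k(\F_q)$-fixed and the fixed parts of $R$ fit together so that $R^{\GL_k(\F_q)}$ really is generated by the images of the vector–covector generators together with the fixed generators — and pinning down exactly which of the $\widetilde d_{i,2m-k}$ remain; the rest is a long but unsurprising sequence of Dickson-invariant identities of the kind already used in Corollary~\ref{eapg-cor} and Theorem~\ref{sylow_thm}.
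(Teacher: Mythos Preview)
Your strategy is the paper's, and most of the ingredients are correctly identified: the factorisation through $\Sp_{2m}(\F_q)_{U_k}$, the vector--covector structure on $\F_q[x_1,\ldots,x_k,N_k(y_1),\ldots,N_k(y_k)]$, the use of \cite{CW2017} for its $\GL_k(\F_q)$-invariants, Lemma~\ref{u_lem} to kill the mixed $u$-invariants, a Dickson identity to trade the $e_i=\bar d_{i,k}$ for $d_{i,2m}$ (the paper phrases this as comparing two Kuhn--Mitchell generating sets for $\F_q[V]^H$ with $H$ the $(k,2m-2k,k)$ parabolic of $\GL_{2m}$, which amounts to the same thing), and an appeal to the Carlisle--Kropholler relations on the middle $\Sp_{2m-2k}$ block to eliminate $\widetilde d_{i,2m-k}$ for $i>m-k$. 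Two points, however, need correction.

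First, the step you flag as the principal obstacle is not handled by a ``diagonal gluing'': $R=\F_q[V]^{\Sp_{2m}(\F_q)_{U_k}}$ is \emph{not} the tensor product of $S=\F_q[x_1,\ldots,x_k,N_k(y_1),\ldots,N_k(y_k)]$ with the $\GL_k$-fixed subalgebra generated by the $\xi_i$ and the $\widetilde d_{j,2m-k}$ (there are relations). The paper instead uses the Cohen--Macaulay property of $R$ from Theorem~\ref{stab_sub_inv}: with $A=\F_q[\widetilde d_{1,2m-k},\ldots,\widetilde d_{2m-2k,2m-k}]$, the set generating $S\otimes A$ is a homogeneous system of parameters for $R$, so $R$ is a \emph{free} $(S\otimes A)$-module, and the module generators can be taken to be monomials in the $\xi_i$ and the remaining $\widetilde d_{j,2m-k}$, all of which are $\GL_k(\F_q)$-invariant. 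Taking $\GL_k(\F_q)$-invariants of a free module with invariant basis reduces at once to computing $(S\otimes A)^{\GL_k(\F_q)}=S^{\GL_k(\F_q)}\otimes A$. This freeness is the missing mechanism; without it your ``fit together'' step is only heuristic.

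Second, your closing paragraph overreaches: you cannot conclude that $\F_q[V]^{\maxpara_k}$ is a complete intersection, and no degree count will do so. The paper proves this only for $k=1$ (as a separate corollary with a bespoke argument), and for $k=2$, $m=2$ it gives computational evidence that $\F_q[V]^{\maxpara_2}$ is \emph{not} a complete intersection. The underlying reason is that the vector--covector ring $S^{\GL_k(\F_q)}$ itself fails to be a complete intersection for $k\ge 2$ (it is Gorenstein but not CI, see \cite{CW2017}), and this propagates through the free-module decomposition. So drop that claim; the argument terminates once you have shown the listed elements generate.
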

\begin{proof} Using Corollary \ref{para_cor}, we have $\F_q[V]^{\maxpara_k}=(\F_q[V]^{\Sp_{2m}(\F_q)_{U_k}})^{\GL_k(\F_q)}.$
A generating set for $\F_q[V]^{\Sp_{2m}(\F_q)_{U_k}}$ was given in Theorem~\ref{stab_sub_inv}.
Let $S$ denote the subalgebra of $\F_q[V]^{\Sp_{2m}(\F_q)_{U_k}}$ generated by $\{x_1,\ldots,x_k,N_k(y_1),\ldots,N_k(y_k)\}$ and
let $W$ denote the span of $\{e_1,\ldots,e_k,f_k,\ldots,f_1\}$ so that $\F_q[W]=\F_q[y_1,\ldots,y_k,x_k,\ldots,x_1]$.
The action of $\GL_k(\F_q)$ on $W$ is a vector/covector action and the algebra homomorphism from $\F_q[W]$ to
$S$ which takes $y_i$ to $N_k(y_i)$ and fixes $x_i$ is $\GL_k(\F_q)$-equivariant. Therefore, we can construct generators for
$S^{\GL_{k}(\F_q)}$ by substituting $N_k(y_i)$ for $y_i$ in the generators of $\F_q[W]^{\GL_k(\F_q)}$ given in \cite{CW2017}.
Let $A$ denote the subalgebra of  $\F_q[V]^{\Sp_{2m}(\F_q)_{U_k}}$ generated by 
$\{\widetilde{d}_{1,2m-k},\ldots,\widetilde{d}_{2m-2k,2m-k}\}$.
Referring to Theorem \ref{stab_sub_inv}, we see that $\{x_1,\ldots,x_k,N_k(y_1),\ldots,N_k(y_k), 
\widetilde{d}_{1,2m-k},\ldots,\widetilde{d}_{2m-2k,2m-k}\}$ is a homogeneous system of parameters for
$ \F_q[V]^{\Sp_{2m}(\F_q)_{U_k}}$ and that $ \F_q[V]^{\Sp_{2m}(\F_q)_{U_k}}$  is a free finitely generated $(S\otimes A)$-module.
Furthermore, the $(S\otimes A)$-module generators can be taken to be monomials in the $\xi_i$ and $\widetilde{d}_{j,2m-k}$ for $j>2m-2k$.
Since the $\widetilde{d}_{j,2m-k}$ are $\GL_k(\F_q)$-invariant, we have
$(S\otimes A)^{\GL_k(\F_q)}=S^{\GL_k(\F_q)}\otimes A$.
 Since the $\xi_i$ are also  $\GL_k(\F_q)$-invariant, to construct a generating set for
$\F_q[V]^{\maxpara_k}=(\F_q[V]^{\Sp_{2m}(\F_q)_{U_k}})^{\GL_k(\F_q)}$, we need only adjoin
$\xi_1,\ldots,\xi_{2m-1}$ and $\widetilde{d}_{i,2m-k}$ for $i=1,2,\ldots,2m-k$ to a generating set for $S^{\GL_k(\F_q)}$.
 It was shown in \cite{CW2017} that
 $\F_q[W]^{\GL_k(\F_q)}$ is generated by Dickson invariants in the $x_i$, Dickson invariants in the $y_i$ and the Bonnaf\'e-Kemper invariants
 $u_j$ for $j=1-k,\ldots,k-1$.  Let $\bar{d}_{\ell,k}$ denote the polynomial constructed by substituting $N_k(y_i)$ for $y_i$ in the Dickson invariants
 in the $y_i$. Then $S^{\GL_k(\F_q)}$ is generated by $\{\bar{d}_{i,k},\widetilde{d}_{i,k}, \widetilde{u}_j\mid i=1,\ldots,k;\, j=1-k,\ldots,k-1\}$
 with 
$ \widetilde{u}_0=\sum_{i=1}^k x_iN_k(y_i)$,
$\widetilde{u}_{\ell}=\sum_{i=1}^k x_i^{q^{\ell}}N_k(y_i)$
 and $\widetilde{u}_{-\ell}=\sum_{i=1}^m x_iN_k(y_i)^{q^{\ell}}$ for $\ell>0$.

 For the remainder of this proof we use $H$ to denote the parabolic subgroup of $\GL_{2m}(\F_q)$ associated to the partition $(k,2m-2k,k)$
 and observe that $\maxpara_k$ is a subgroup of $H$. We claim that $\F_q[V]^H$ is generated by
 $$\{ \bar{d}_{i,k}, \widetilde{d}_{i,k}, \widetilde{d}_{j,2m-k} \mid i=1,\ldots,k;\, j=1,\ldots, 2m-2k\}.$$
 First note that we have $2m$ elements in this set and the product of the degrees equals the order of $H$.
 Thus we only need to show the set is a homogenerous system of parameters. We could do this directly but prefer to take an alternate approach.
 The set $\{\widetilde{d}_{i,k}, \widetilde{d}_{j,2m-k} \mid i=1,\ldots,k;\, j=1,\ldots, 2m-2k\}$ is the generating set for the
 parabolic subgroup of $\GL_{2m-k}(\F_q)$ associated to the partition $(2m-2k,k)$ constructed by Kuhn and Mitchell \cite[Theorem~2.2]{KM1986}.
 We can then form $H$ as the polynomial gluing of this group with $\GL_k(\F_q)$. This shows that the given set is a generating set for
 $\F_q[V]^H$ but this also means that we can replace the $\bar{d}_{i,k}$ with $d_{i,2m}$ in our generating set for $\F_q[V]^{\maxpara_k}$
 by using the Kuhn-Mitchell generators for $\F_q[V]^H$.
 
 To complete the proof we need to show that $\widetilde{u}_j$ and $\widetilde{d}_{i,2m-k}$ for $i>m-k$ are redundant.
 To show that the $\widetilde{u}_j$ are redundant we use Lemma \ref{u_lem}: taking $i=0$ and $j=0$ shows that $\widetilde{u}_0$ is redundant,
 taking $i=0$ and $j>0$ shows that $\widetilde{u}_{-j}$ is redundant, and
 taking $i>0$ and $j=0$ shows that $\widetilde{u}_{i}$ is redundant.

 Define $R:=\F_q[y_{k+1},\ldots,y_m,x_m,\ldots,x_1]$, let
 $\overline{W}_k$ denote the span of $\{x_1,\ldots,x_k\}$ and define
 $\overline{N}_k(t):=\prod_{u\in\overline{W}_k}(t-u)$.
 Let $\overline{d}_{i,2m-2k}$ denote the element of $R$ constructed by
 substituting $\overline{N}_k(y_j)$ for $y_j$ and
 $\overline{N}_k(x_j)$ for $x_j$ into the $i^{th}$ Dickson invariant in the variables
 $\{y_{k+1},\ldots,y_m,x_m,\ldots,x_{k+1}\}$.
 Define
 $$\overline{\xi}_i:=\sum_{j=k+1}^m \overline{N}_k(x_j)\overline{N}_k(y_j)^{q^i}
 -\overline{N}_k(y_j)\overline{N}_k(x_j)^{q^i}.$$
 The action of
 $\maxpara_k$ on $R$ is a polynomial gluing of $\Sp_{2m-2k}(\F_q)$ and
 $\GL_k(\F_q)$. Using the Carlisle-Kropholler generators for the symplectic invariants,
 the gluing construction and Theorem \ref{prop_thm}, we see that
 the ring $R^{\maxpara_k}$ is the complete intersection generated by
 $\widetilde{d}_{i,k}$ for $i=1,\ldots,k$, $\overline{d}_{i,2m-2k}$ for $i=1,\ldots, m-k$
   and $\overline{\xi}_i$ for $i=1,\ldots, 2m-2k-1$.
   Using the expansion of  $\overline{N}_k(t)$ in terms of the Dickson invariants and the fact that
   $\overline{N}_k(x_i)=0$ for $i\leq k$ gives
   \begin{eqnarray*} 
     \overline{\xi}_i&=&\sum_{j=1}^m\left(
     \left(\sum_{\ell=0}^k x_j^{q^{k-\ell}} \widetilde{d}_{\ell,k}\right)
     \left(\sum_{s=0}^k y_j^{q^{k-s}} \widetilde{d}_{s,k}\right)^{q^i}
    -\left(\sum_{\ell=0}^k y_j^{q^{k-\ell}} \widetilde{d}_{\ell,k}\right)
      \left(\sum_{s=0}^k x_j^{q^{k-s}} \widetilde{d}_{s,k}\right)^{q^i} \right)\\
        &=& \sum_{j=1}^m\sum_{\ell=0}^k\sum_{s=0}^k
        \left(x_j^{q^{k-\ell}}y_j^{q^{k-s+i}}-x_j^{q^{k-s+i}}y_j^{q^{k-\ell}} \right)
          \widetilde{d}_{\ell,k}\widetilde{d}_{s,k}^{q^i}
           \end{eqnarray*}
which gives
 \begin{eqnarray} \label{xib_eqn}
     \overline{\xi}_i     &=&\sum_{\ell>s-i}\xi_{\ell-s+i}^{q^{k-\ell}}
          \widetilde{d}_{\ell,k}\widetilde{d}_{s,k}^{q^i}
       + \sum_{\ell<s-i}\xi_{s-\ell-i}^{q^{k-s+i}}
         \widetilde{d}_{\ell,k}\widetilde{d}_{s,k}^{q^i}.
         \end{eqnarray}
           
   Therefore the $\overline{\xi_i}$ lie in the algebra generated by the
   $\xi_i$ and the $\widetilde{d}_{i,k}$.
   Since $\widetilde{d}_{i,2m-k}\in R^{\maxpara_k}$, these invariants can be re-written as polynomials in
   $\widetilde{d}_{i,k}$ for $i=1,\ldots,k$, $\overline{d}_{i,2m-2k}$ for $i=1,\ldots, m-k$     and $\xi_i$ for $i=1,\ldots, 2m-2k-1$.
     Using the Kuhn-Mitchel generators for the parabolic subgroup of $\GL_{2m-k}(\F_q)$
     associated to the partition $(2m-2k,k)$ and comparing degrees,
     $\overline{d}_{i,2m-2k}$ for $i=1,\ldots, m-k$ can be be rewritten using
       $\widetilde{d}_{i,2m-k}$ for $i=1,\ldots,m-k$ and $\widetilde{d}_{i,k}$
       for $i=1,\ldots,k$.
\end{proof}

\begin{coro}
The ring $\F_q[V]^{\maxpara_1}$ is a complete intersection. 
\end{coro}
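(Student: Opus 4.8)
The plan is to mirror the proof of Theorem~\ref{sylow_thm}: take the generating set supplied by Theorem~\ref{max_para_gen}, single out a homogeneous system of parameters inside it, list a short family of rewriting relations, and then invoke Cohen--Macaulayness to force that family to generate the whole ideal of relations.

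First I would record the bookkeeping. By Theorem~\ref{max_para_gen} (case $k=1$), $\F_q[V]^{\maxpara_1}$ is generated by the $3m$ elements $\xi_1,\dots,\xi_{2m-1}$, $d_{1,2m}$, $\widetilde d_{1,1}$ and $\widetilde d_{1,2m-1},\dots,\widetilde d_{m-1,2m-1}$, while $\dim\F_q[V]^{\maxpara_1}=\dim_{\F_q}V=2m$; since the ring is an integral domain, Proposition~\ref{CI-char} reduces the claim to showing that the ideal of relations among these generators is generated by $3m-2m=m$ elements. The ring is Cohen--Macaulay by the corollary that every parabolic subgroup of $\Sp_{2m}(\F_q)$ has Cohen--Macaulay invariants. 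I would then work with the system of parameters
$$\mathcal H:=\{\,\widetilde d_{1,1},\ N_1(y_1)^{q-1},\ \widetilde d_{1,2m-1},\dots,\widetilde d_{2m-2,2m-1}\,\},$$
whose members all lie in the subalgebra generated by the displayed $3m$ generators (for $N_1(y_1)^{q-1}=\overline d_{1,1}$ and for $\widetilde d_{i,2m-1}$ with $i>m-1$ this is exactly the rewriting carried out in the proof of Theorem~\ref{max_para_gen}), and which is a genuine hsop since $\F_q[V]^{\maxpara_1}$ is module-finite over $S^{\GL_1(\F_q)}\otimes A$, which is in turn module-finite over $\F_q[\mathcal H]$. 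A degree count gives $\prod_{h\in\mathcal H}\deg h=(q-1)^2q^{2m^2-m}\prod_{j=1}^{2m-2}(q^j-1)$ and $|\maxpara_1|=(q-1)q^{m^2}\prod_{i=1}^{m-1}(q^{2i}-1)$, so the quotient is the integer $r=(q-1)q^{m(m-1)}\prod_{i=1}^{m-1}(q^{2i-1}-1)$; Cohen--Macaulayness then forces $\F_q[V]^{\maxpara_1}$ to be a free $\F_q[\mathcal H]$-module of rank $r$.

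The substance of the proof is to produce $m$ relations and check that the quotient $R_0$ of the polynomial ring on the $3m$ generators by them is already module-finite over $\F_q[\mathcal H]$, with at most $r$ module generators. For this I would combine two inputs. From Lemma~\ref{u_lem} with $k=1$, the identities $x_1^{q^i}N_1(y_1)^{q^j}=\sum_{\ell}\xi^{q^i}_{2m-1-i-\ell+j}\widetilde d^{q^j}_{\ell,2m-1}$ let one solve for $\xi_{2m-1}$ (taking $i=j=0$) and for suitable $q$-power exponents of $\xi_1,\dots,\xi_{2m-2}$ (taking $i>0$ or $j>0$) in terms of $\mathcal H$ and monomials of lower degree; this is the symplectic analogue of the Bonnaf\'e--Kemper rewriting used in Theorem~\ref{sylow_thm}, and one of these identities also expresses the single relation $\widetilde d_{1,1}\cdot N_1(y_1)^{q-1}=(x_1N_1(y_1))^{q-1}$ in terms of the generators. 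The remaining $m-1$ relations I would take from the polynomial gluing description of $\maxpara_1$ acting on $R=\F_q[y_2,\dots,y_m,x_m,\dots,x_1]$ (Theorem~\ref{prop_thm}(iii)): the $m-1$ defining relations of the complete intersection $R^{\maxpara_1}$ --- essentially the Carlisle--Kropholler relations for $\Sp_{2m-2}(\F_q)$, transported through the substitution of $\overline N_1(y_j)$ for $y_j$ and $\overline N_1(x_j)$ for $x_j$ together with formula~(\ref{xib_eqn}) --- descend to relations among the $\xi_i$ and the $\widetilde d_{\bullet,1}$. Together these give the required $m$ relations.

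Granting this verification, one finishes exactly as in Theorem~\ref{sylow_thm}: since $R_0$ is module-finite over a polynomial ring in $2m$ variables, $\dim R_0\le 2m$, so the $m$ chosen relations have height $\ge m$ in the polynomial ring on the $3m$ generators, hence exactly $m$; thus they form a regular sequence, $R_0$ is a complete intersection --- in particular Cohen--Macaulay --- hence $\F_q[\mathcal H]$-free of some rank $r_0\le r$; the $\F_q[\mathcal H]$-linear surjection $R_0\twoheadrightarrow\F_q[V]^{\maxpara_1}$ onto a free module of rank $r$ then forces $r_0=r$ and is therefore an isomorphism, so $\F_q[V]^{\maxpara_1}$ is a complete intersection. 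I expect the main obstacle to be the middle step: checking that the $m$ relations really do bound the exponents of the $\xi_i$, so that $R_0$ is module-finite over $\F_q[\mathcal H]$ with no more than $r$ module generators --- an intricate but routine adaptation of the exponent bounds in the proof of Theorem~\ref{sylow_thm}.
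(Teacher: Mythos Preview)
Your overall architecture matches the paper's: start from the $3m$ generators of Theorem~\ref{max_para_gen}, choose a homogeneous system of parameters, exhibit $m$ rewriting relations (one coming from the $x_1N_1(y_1)$ identity, the rest from the Carlisle--Kropholler structure of $\Sp_{2m-2}$ via Equation~(\ref{xib_eqn})), and finish with a Cohen--Macaulay rank count. But two points deserve attention.

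\textbf{The choice of hsop.} The paper takes
$\mathcal H=\{x_1^{q-1},\,\xi_1,\ldots,\xi_{m-1},\,\widetilde d_{1,2m-1},\ldots,\widetilde d_{m-1,2m-1},\,d_{1,2m}\}$,
a \emph{subset} of the $3m$ generators. This leaves only $\xi_m,\ldots,\xi_{2m-1}$ outside the hsop, so module generators are monomials in those $m$ elements alone, the module rank is the clean $(q-1)q^{m(m-1)/2}$, and each of the $m$ relations targets exactly one of them. Your hsop $\{\widetilde d_{1,1},N_1(y_1)^{q-1},\widetilde d_{1,2m-1},\ldots,\widetilde d_{2m-2,2m-1}\}$ is the generating set for the invariants of the $\GL$-parabolic of shape $(1,2m-2,1)$; it is a legitimate hsop, but $m$ of its members are not among the $3m$ generators and must first be rewritten. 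Consequently all of $\xi_1,\ldots,\xi_{2m-1}$ and $d_{1,2m}$ lie outside your hsop, the module rank is the much larger $(q-1)q^{m(m-1)}\prod_{i=1}^{m-1}(q^{2i-1}-1)$, and the exponent-bounding becomes genuinely more intricate. In particular, your claim that Lemma~\ref{u_lem} ``lets one solve for $\xi_{2m-1}$ and for suitable $q$-power exponents of $\xi_1,\ldots,\xi_{2m-2}$'' is not straightforward with your $\mathcal H$: the identities have $x_1^{q^i}N_1(y_1)^{q^j}$ on the left, and while this equals $\widetilde d_{1,1}^{a}\cdot(N_1(y_1)^{q-1})^{b}\cdot(x_1N_1(y_1))^{c}$ for suitable $a,b,c$, unpacking this inside $R_0$ is exactly the ``intricate but routine'' step you defer. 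The paper's hsop sidesteps all of this because $\xi_1,\ldots,\xi_{m-1}\in\mathfrak h$, so $x_1N_1(y_1)\equiv\xi_{2m-1}\pmod{\mathfrak h}$ immediately.

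\textbf{A miscount.} You assert that $R^{\maxpara_1}$ has $m-1$ defining relations. It has $m-2$: the gluing identifies $R^{\maxpara_1}$ with $\F_q[x_1^{q-1}]\otimes\F_q[W]^{\Sp_{2m-2}}$, and the second factor has $(3(m-1)-1)-(2m-2)=m-2$ relations. So your tally of $1+(m-1)=m$ relations is short by one. The paper faces the same issue and resolves it (somewhat implicitly) by also using the Dickson identity that expresses $\overline\xi_{2m-2}$ as $\sum_i\pm\overline d_{i,2m-2}\,\overline\xi_{2m-2-i}$, which lies in $\overline{\mathfrak h}$ since each summand contains a factor from $\overline{\mathcal H}$. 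You would need to supply this same extra relation.
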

\begin{proof} By Theorem \ref{max_para_gen}, $\F_q[V]^{\maxpara_1}$ is generated by $\xi_1,\ldots,\xi_{2m-1}$,
$\widetilde{d}_{1,2m-1},\ldots,\widetilde{d}_{m-1,2m-1}$, $d_{1,2m}$ and $\widetilde{d}_{1,1}=x_1^{q-1}$.
We can choose a homogeneous system of parameters 
$$\mathcal{H}=\{x_1^{q-1}, \xi_1,\ldots,\xi_{m-1},\widetilde{d}_{1,2m-1},\ldots,\widetilde{d}_{m-1,2m-1}, d_{1,2m}\}.$$
The resulting module generators are then monomials in $\xi_{m},\ldots,\xi_{2m-1}$. Since the order of the group is 
$q^{m^2}(q-1)\prod_{i=1}^{m-1}(q^{2i}-1)$, the product of the degrees of the elements of $\mathcal{H}$ is
$$q^{(3m-1)m/2}(q-1)^2\prod_{i=1}^{m-1}(q^{2i}-1)$$ and the ring is
Cohen-Macaulay we need $(q-1)q^{m(m-1)/2}$ module generators.
To prove that $\F_q[V]^{\maxpara_1}$ is a complete intersection, it is sufficient to construct relations to rewrite
$\xi_{2m-1}^{q-1}$ and $\xi_{2m-j}^{q^{j-1}}$ for $j=2,\ldots, m$.
Let $\mathfrak{h}$ denote the ideal in $\F_q[V]^{\maxpara_1}$ generated by $\mathcal{H}$.
We will complete the proof by showing that $\xi_{2m-1}^{q-1}\in\mathfrak{h}$
and $\xi_{2m-j}^{q^{j-1}}\in\mathfrak{h}$ for $j=2,\ldots, m$.

Lemma \ref{u_lem} with $k=1$, $i=0$ and $j=0$
gives
$$x_1N_1(y_1)=\sum_{\ell=0}^{2m-1}\xi_{2m-1-\ell}\widetilde{d}_{\ell,2m-1}
=\xi_{2m-1}+\sum_{\ell=1}^{m-1}\xi_{2m-1-\ell}\widetilde{d}_{\ell,2m-1}+\sum_{s=1}^{m}\xi_s\widetilde{d}_{2m-s,2m-1}.$$
Therefore $x_1N_1(y_1)-\xi_{2m-1}\in\mathfrak{h}$.
Furthermore $d_{1,2m}=\widetilde{d}^q_{1,2m-1}-N_1(y_1)^{q-1}$.
Hence $\xi_{2m-1}^{q-1}\equiv_{\mathfrak{h}}(x_1N_1)^{q-1}\in\mathfrak{h}$.

Equation \ref{xib_eqn} above with $k=1$ and $i\in\{m,\ldots,2m-2\}$
  gives $\overline{\xi}_i=\xi_i^q-\xi_{i+1}x_1^{q-1}-\xi_{i-1}x_1^{(q-1)q^i}+\xi_i x_1^{(q-1)(q^i+1)}$.
  Therefore $\xi_i^q\equiv_{\mathfrak{h}}\overline{\xi}_i$.
  Using the notation of the proof of Theorem \ref{max_para_gen} with $k=1$,
  $R^{\maxpara_1}$ is a complete intersection with the relations coming through the gluing isomorphism
  from the Carlisle-Kropholler relations for $\Sp_{2m-2k}(\F_q)$.
  Define
  $$\overline{\mathcal{H}}:=
  \{x_1^{q-1}, \overline{\xi_1},\ldots,\overline{\xi}_{m-2},\overline{d}_{1,2m-2},\ldots,\overline{d}_{m-2,2m-2}\}.$$
  an let $\overline{\mathfrak{h}}$ denote the ideal in $R^{\maxpara_1}$ generated by
  $\overline{H}$. Then $\overline{\mathfrak{h}}\subseteq \mathfrak{h}\cap R^{\maxpara_1}$.
  For $i=m,\ldots, 2m-2$, we can use the Carlisle-Kropholler relations to show
  that $\overline{\xi}_i^{q^{2m-2-i}}\in\overline{\mathfrak{h}}\subset\mathfrak{h}$.
  Therefore  $\xi_{2m-j}^{q^{j-1}}\equiv_{\mathfrak{h}}\overline{\xi}_{2m-j}^{q^{j-2}}\in\mathfrak{h}$
  for $j=2,\ldots, m$, as required.
  \end{proof}

\begin{exam} Consider the type $2$ parabolic for $m=2$.
By Theorem \ref{max_para_gen}, the invariants are generated by
$\xi_1, \xi_2, \xi_3,\widetilde{d}_{1,2},\widetilde{d}_{2,2},d_{1,4},d_{2,4}$ which have degrees
$q+1,q^2+1,q^3+1, q^2-q,q^2-1,q^4-q^3,q^4-q^2$.
There are two natural choices for a homogeneous system of parameters:
$$\mathcal{H}_1:=\{\xi_1, \xi_2,d_{1,4},d_{2,4}\},$$
$$\mathcal{H}_2:=\{\widetilde{d}_{1,2},\widetilde{d}_{2,2},d_{1,4},d_{2,4}\}.$$
For the first system of parameters, we need $q(q^2+1)(q+1)$ module generators and
for the second we need $q^2(q-1)^2(q+1)$ generators.
Using the second system of paprameters, the module generators can be taken to be
monomials in the $\xi_i$.
%For $q=3$, we need $30$ generators for the first system and $36$ for the second.
Using the Carlisle-Kropholler relation for $\F_q[V]^{\Sp_4(\F_q)}$, we have
$$\xi_3^q+d_{1,4}\xi_2^q+d_{2,4}\xi_1^q=\xi_1(\xi_1^{q^2+1}-\xi_2^{q+1}+\xi_1^q\xi_3)^{q-1}=\xi_1d_{4,4}.$$ 
This is essentially the translation of the relation $T_1^*$ from \cite{CW2017} into this context.
Let $\mathfrak{h}$ denote the ideal in $\F_q[V]^{G_2}$ generated by $\mathcal{H}_2$.
Since $d_{4,4}\in \F_q[\widetilde{d}_{1,2},\widetilde{d}_{2,2},d_{1,4},d_{2,4}]$,
the relation above shows $\xi_3^q\in\mathfrak{h}$.
The translation of the relation $T_1$ from \cite{CW2017} shows that $\xi_1^{q^2}\in\mathfrak{h}$.
The translation of the relation $T_{00}$ shows that $(\xi_2^{q+1}-\xi_1^q\xi_3)^{q-1}\in\mathfrak{h}$.
Conjecture 16 from \cite{CW2017} suggests two additional relations, $T_{1,0}$ and $T_{0,1}$.
These relations would translate to relations in degrees $q^4-q$ and $q^4-q^3+q^2-1$.
We can use the the relation $T_{00}$ to show that $\xi_2^{q^2-1}$ is congruent modulo $\mathfrak{h}$
to a linear combination of smaller monomials in the $\xi_i$ (using the grevlex order with
$\xi_3>\xi_2>\xi_1$). We conjecture that the relation in degree $q^4-q$ can be used to show that
$\xi_3\xi_2^{q^2-q-1}$ is also congruent to a linear combination of smaller monomials and that
the relation in degree $q^4-q^3+q^2-1$ shows that $\xi_1^q\xi_2^{q^2-q-1}$ is congruent
to a linear combination of smaller monomials. If true, by counting independent monomials,
these conjectures would show that $\F_q[V]^{\maxpara_2}$ for $m=2$ is not a complete intersection. 
We have confirmed these conjectures for $q=3$ and $q=5$ using Magma \cite{magma}.
\end{exam}

\begin{rem}
To an arbitrary parabolic subgroup of $\Sp_{2m}(\F_q)$ we can associate a partition of $2m$ : $\lambda=(r_1,\ldots,r_{\ell},2m-2k,r_{\ell},\ldots,r_1)$
with $r_1+\cdots+r_{\ell}=k$. Let $G_{\lambda}$ denote the parabolic subgroup of $\Sp_{2m}(\F_q)$ associated to $\lambda$ and
let $\GL_{\lambda}$ denote the parabolic subgroup of $\GL_{2m}(\F_q)$ associated to the partition $\lambda$.
The ring $\F_q[V]^{\GL_{\lambda}}$ is a polynomial algebra generated by a collection of Dickson invariants. 
Since $\F_q[V]^{G_{\lambda}}$ is Cohen-Macaulay, it is a free  $\F_q[V]^{\GL_{\lambda}}$-module.
We conjecture that the module generators can be taken to be monomials in the $\xi_i$ and that
 $\F_q[V]^{G_{\lambda}}$ is generated by $\xi_1,\ldots,\xi_{2m-1}$ and a subset of a generating set for $\F_q[V]^{\GL_{\lambda}}$.
\end{rem}

%\newpage

\section{Singular Finite Classical Groups}\label{class_groups}

In this section $\F$ will denote a finite field and $\beta$ an alternating, symmetric or Hermitian form on $V$.
We suggest \cite{Taylor1992} or \cite{Wan2002} for background material on finite classical groups.
Define $G_{\beta}:=\{g\in \GL(V)\mid \beta(gv,gw)=\beta(v,w)\, \forall v,w,\in V\}$.
Take $W_1$ to be the radical of $\beta$, in other words, $W_1:=\{u\in V\mid \beta(u,v)=0 \, \forall v \in V\}$.
Take $G_1:=\GL(W_1)$, $W_2:=V/W_1$ and $G_2$ to be the image of $G_{\beta}$ in $\GL(W_2)$.
It is a straight-forward observation that $G_{\beta}$ is the polynomial gluing $G_1\times_{\mathcal M} G_2$
with $\mathcal{M}=\Hom_{\F}(W_2,W_1)$. Furthermore, $\beta$ induces a non-degenerate form on $W_2$.
This construction reproduces the content of \cite[\S 4.7]{Hussain2011}.
Following the convention of Section \ref{glue_constr}, we denote $m=\dim(W_1)$ and $n=\dim(W_2)$.

If $\beta$ is an alternating form, then $n$ is even, $G_2$ is isomorphic to $\Sp_n(\F)$
and $G_{\beta}$ is isomorphic to $\GL_m(\F)\times_{\mathcal M}\Sp_n(\F)$.
By the work of Carlisle and Kropholler \cite[\S 8.3]{Ben1993}, we know that
$\F[W_2]^{\Sp_n(\F)}$ is a complete intersection and a unique factorization domain. 
Therefore, using Theorem \ref{prop_thm}, we see that $\F[V]^{G_{\beta}}$
is a complete intersection and a unique factorization domain.
Note that in this case, we can identify $G_{\beta}$ with the subgroup of $\GL_{n+m}(\F_q)$ 
which fixes $\xi:=x_1x_2^q-x_1^qx_2+\cdots +x_{n-1}x_n^q-x_{n-1}^qx_n$ with $\F=\F_q$.

If $\beta$ is Hermitian, then  $G_2$ is isomorphic to the unitary group
${\rm U}_n(\F)$ and $G_{\beta}$ is isomorphic to $\GL_m(\F)\times_{\mathcal M}{\rm U}_n(\F)$.
By the work of Chu and Jow \cite{CJ2006}, we know that
$\F[W_2]^{{\rm U}_n(\F)}$ is a complete intersection and a unique factorization domain. 
Therefore, using Theorem \ref{prop_thm}, we see that $\F[V]^{G_{\beta}}$
is a complete intersection and a unique factorization domain.
Note that in this case, we can identify $G_{\beta}$ with the subgroup of $\GL_{n+m}(\F_{q^2})$ 
which fixes $\xi:=x_1^{q+1}+\cdots +x_n^{q+1}$ with $\F=\F_{q^2}$.

If $\beta$ is symmetric and the characteristic of $\F$ is not $2$, then $\beta$ is the polarization of the quadratic form
$Q(v):=\beta(v,v)/2$ and $G_2$  is isomorphic to the orthogonal group ${\rm O}_n(\F,Q)$. While there are some partial results 
and conjectures concerning $\F[W_2]^{{\rm O}_n(\F,Q)}$, see \cite{Chu2001} and \cite{Chu2006},
there is no published refereed  account of the general result. 
In characteristic $2$, there is a breakdown in the correspondence between quadratic forms and symmetric forms.
We do have the work of Kropholler et al. \cite{KRS2005} which computes the ring of invariants 
for the orthogonal group associated to a non-singular quadratic form on  vector space over the field $\F_2$;
in this context, the polarization of the quadratic form is a possibly degenerate alternating form.

\begin{exam} Take $q$ odd, $n=3$ and consider the quadratic form $\Delta=x_2^2-x_1x_3$.
Then $G_2$ is ${\rm O}_3(\F_q)$. The order of ${\rm O}_3(\F_q)$ is $2q(q^2-1)$
and $\F_q[W_2]^{{\rm O}_3(\F_q)}$ is a polynomial algebra with generators in degrees 
$2$, $q+1$ and $q(q-1)$,  see \cite{Cohen1990} or \cite{Smith1999}.
The generator in degree $2$ is $\Delta$ and the generator in degree $q+1$ can be constructed by applying 
a Steenrod operation to $\Delta$.
Define
$$E:=\left\{
\begin{pmatrix}
1& 2c& c^2\\
0& 1& c\\
0& 0&1
\end{pmatrix}
\mid c\in\F_q\right\}.
$$
Then $E$ is a Sylow $p$-subgroup for ${\rm O}_3(\F_q)$.
The ring of invariants $\F_q[W_2]^E$ is the hypersurface
 generated by $\Delta$ and the orbit products of the variables,
 see \cite{HS2011} and, for the case $q=p$, \cite{Dickson_Mad}.
 Note that we can identify $W_2$ with the second symmetric power representation of
 $SL_2(\F_q)$. The action of $SL_2(\F_q)$  on $W_2$ is not faithful; the image of
 $SL_2(\F_q)$ in $W_2$ is isomorphic to $SL_2(\F_q)/\langle -1\rangle$ and has index
 $4$ in ${\rm O}_3(\F_q)$  with coset representatives
 $$\left\{\begin{pmatrix}
 (-1)^{e_1}& 0 & 0\\
 0& (-1)^{e_2} &0\\
 0&0&(-1)^{e_1} \end{pmatrix} \mid e_1, \, e_2\in\{0,1 \}
\right\}. $$
The ring of invariants of $\F_q[W_2]^{SL_2(\F_q)}$ is a hypersurface with generators in
degrees $2$, $q+1$, $q(q-1)/2$ and $q(q+1)/2$, see \cite{HS2011} and, for the case $q=p$, \cite{Dickson_Mad}.
It follows from Theorem \ref{prop_thm} that the ring of invariants for
$\GL_m(\F_q)\times_{\mathcal M}{\rm O}_3(\F_q)$ is a polynomial algebra
and that the ring of invariants for
$\GL_m(\F_q)\times_{\mathcal M}E$
and $\GL_m(\F_q)\times_{\mathcal M}(SL_2(\F_q)/\langle -1\rangle)$
are both hypersurfaces. 
 \end{exam}

\begin{exam} Take $q$ odd, $n=4$ and consider the quadratic form $u=x_2x_3-x_1x_4$.
Then $G_{2}$ is ${\rm O}^+_4(\F_q)$. The order of ${\rm O}^+_4(\F_q)$ is $2q^2(q^2-1)^2$
and $\F_q[W_2]^{{\rm O}^+_4(\F_q)}$ is a hypersurface with generators in degrees 
$2$, $q+1$, $q^2+1$, $q^3-q^2$ and $q^3-q$,  see \cite{Chu2001}.
The generator in degree $2$ is $u$ and the generators in degrees $q+1$ and $q^2+1$
can be constructed by applying Steenrod operations to $u$.
Define
$$E:=\left\{
\begin{pmatrix}
1& c_1& c_2&c_1c_2\\
0& 1& 0& c_2\\
0& 0&1& c_1\\
0&0&0&1
\end{pmatrix}
\mid (c_1,c_2)\in\F^2_q\right\}. $$
Then $E$ is a Sylow $p$-subgroup for ${\rm O}^+_4(\F_q)$.
The ring of invariants $\F_q[W_2]^E$ is a complete intersection
 generated by $u$, an element in degree $q+1$ constructed
 by applying a Steenrod operation to $u$,  and the orbit products of the variables: $N_E(x_4)$ of degree $q^3$,
 $N_E(x_2)$ and $N_E(x_2)$ of degree $q$, and $x_1$  (see \cite{FF2017}).
Let $E_1$ denote the subgroup of $E$ of order $q$ corresponding to taking $c_2=0$.
It is easy to show that $\F_q[W_2]^{E_1}$ is the hypersurface generated by $u$ and the $E_1$-orbit products of the variables;
this calculation can also be interpreted as the $n=2$ case of \cite[Theorem 2.4]{BK2011}.
Let $E_2$ denote the subgroup of $E$ of order $q$ corresponding to taking $c_1=0$.
Since $E_2$ is conjugate to $E_1$, $\F_q[V]^{E_2}$ is also a hypersurface.
Note that there is vector/covector action of $\GL_2(\F_q)$ on $W_2$ with $E_1$ as the image of the Sylow $p$-subgroup.
It follows from \cite{CW2017} that $\F_q[W_2]^{\GL_2(\F_q)}$ is generated by $u$, $u_1= x_2^qx_3-x_1^qx_4$,
$u_{-1}=x_2x_3^q-x_1x_4^q$, and Dickson invariants: $d_{1,2}$ and $d_{2,2}$ in the variables $x_1$ and $x_2$, and
 $d^*_{1,2}$ and $d^*_{2,2}$ in the variables $x_3$ and $x_4$.
Furthermore, while $\F_q[W_2]^{\GL_2(\F_q)}$ is Gorenstein, it is not a complete intersection.
Let $B$ denote the upper triangular subgroup of  ${\rm O}^+_4(\F_q)$.
It follows from \cite[Theorem 2.4]{BK2011} that $\F_q[W_2]^B$ is the complete intersection generated by
$u$, $u_1$, $u_{-1}$, $x_1^{q-1}$, and $N_E(x_i)^{q-1}$ for $i=2,3,4$.
There are two equivalence classes of maximal parabolic subgroups for ${\rm O}^+_4(\F_q)$.
One is represented by $\GL_2(\F_q)\ltimes E_2$ using the vector/covector action of $\GL_2(\F_q)$
and the other is represented by $C_2\ltimes B$, where $C_2$ acts by exchanging $x_2$ and $x_3$.
The ring $\F_q[W_1]^{C_2\ltimes B}$ is the complete intersection generated by
$u$, $u_1$, $u_{-1}$, $x_1^{q-1}$, $N_E(x_4)^{q-1}$, $N_E(x_2)^{q-1}+ N_E(x_2)^{q-1}$
and $N_E(x_2)^{q-1}N_E(x_2)^{q-1}$.
The ring $\F_q[W_2]^{\GL_2(\F_q)\ltimes E_2}$ is Cohen-Macaulay and is generated by
$d_{1,2}$, $d_{2,2}$ and the five generators for  $\F_q[W_1]^{{\rm O}^+_4(\F_q)}$.
If $H$ is any of the above subgroups of ${\rm O}^+_4(\F_q)$, the results of Section \ref{tensor_sec} can
 be used to deduce properties of the ring of invariants of $\GL_m(\F_q)\times_{\mathcal M}H$.
 \end{exam}

Recall that a group $G$ acts on $V$ as a \emph{rigid reflection group} if every isotropy subgroup  acts on $V$ as a  reflection group
(see, for example, \cite{DJ2015}).
%if every minimal reflecting subspace has codimension at most 1. 
%In the nonmodular cases, rigid reflections group and reflection groups always coincide. 
%However, this statement does not hold for the modular cases in general; see \cite{DJ2015}.

\begin{thm} If $\F[W_2]^{G_2}$ has a homogeneous geometric separating set of size $n$
then $\F[V]^{G_{\beta}}$ has a homogeneous geometric separating set of size $m+n$ and 
$G_{\beta}$ acts on $V$ as a rigid reflection group.
\end{thm}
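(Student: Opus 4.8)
The statement has two parts, and the plan is to derive the separating set from the gluing picture together with the Dickson invariants, and then deduce the rigid reflection group property from the theory of separating invariants.

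For the first part, the crucial observation is that $G_{\beta}=\GL(W_1)\times_{\mathcal M}G_2$ with $\mathcal M=\Hom_{\F}(W_2,W_1)$ is exactly the situation of Example~\ref{Fqexam}, with $G_1$ taken to be the \emph{full} general linear group $\GL(W_1)$; hence it is a split polynomial gluing, with $A=\F[N_{\mathcal M}(y_1),\ldots,N_{\mathcal M}(y_m)]$ and with the map $\psi$ of Theorem~\ref{Split_thm} sending $y_j$ to $N_{\mathcal M}(y_j)$ giving a $G_1$-equivariant isomorphism $\F[W_1]\to A$. Now $\F[W_1]^{\GL(W_1)}$ is the Dickson algebra, a polynomial ring on its $m$ Dickson invariants, and any homogeneous generating set of an invariant ring is in particular a homogeneous geometric separating set (separation over $\overline{\F}$ holds because forming invariants of a finite group commutes with the base change $\F\to\overline{\F}$, and a generating set of the invariant ring separates orbits over an algebraically closed field). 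Thus $\F[W_1]^{\GL(W_1)}$ has a homogeneous geometric separating set of cardinality $m=\dim_{\F}(W_1)$; feeding this, together with the hypothesised homogeneous geometric separating set of size $n$ for $\F[W_2]^{G_2}$, into Theorem~\ref{Split_thm} yields a homogeneous geometric separating set of size $m+n$ for $\F[V]^{G_{\beta}}$.

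For the second part, note that $m+n=\dim_{\F}(V)$, so the separating set just produced has cardinality $\dim_{\F}(V)$. Let $B$ be the graded subalgebra of $\F[V]^{G_{\beta}}$ it generates. Since $\F[V]^{G_{\beta}}$ is an integral domain, so is $B$; and since a geometric separating set necessarily has cardinality at least $\dim_{\F}(V)$ (its elements define a morphism out of $V\otimes_{\F}\overline{\F}$ with finite fibres), $B$ has Krull dimension exactly $\dim_{\F}(V)$. A graded integral domain generated by $d$ elements and of Krull dimension $d$ is a polynomial algebra: the presentation $\F[X_1,\ldots,X_d]\twoheadrightarrow B$ has kernel a prime of height $d-\dim B=0$, hence the zero ideal. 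So $\F[V]^{G_{\beta}}$ admits a graded polynomial separating algebra of full dimension, and the criterion of Dufresne and Jeffries relating such separating algebras to reflection groups \cite{DJ2015} then forces $G_{\beta}$ to act on $V$ as a rigid reflection group.

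The step requiring the most care is the final invocation of the separating-invariants criterion: one must check that the relevant implication (existence of a graded polynomial separating algebra of full dimension forces the action to be a rigid reflection group) is available in the modular, non-algebraically-closed setting, which is precisely why the geometric notion of separating set is used throughout. Everything preceding it — recognising $G_{\beta}$ as the split polynomial gluing of Example~\ref{Fqexam}, noting that the Dickson invariants form a geometric separating set of size $\dim_{\F}(W_1)$, and the elementary fact that a graded domain generated by $\dim_{\F}(V)$ elements of Krull dimension $\dim_{\F}(V)$ is polynomial — is routine.
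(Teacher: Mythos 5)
Your proposal is correct and takes essentially the same route as the paper: recognise $G_{\beta}=\GL(W_1)\times_{\mathcal M}G_2$ as the split polynomial gluing of Example~\ref{Fqexam}, note that $\F[W_1]^{\GL(W_1)}$ is the Dickson polynomial algebra and hence admits a homogeneous geometric separating set of size $m$, apply Theorem~\ref{Split_thm} to obtain one of size $m+n$ for $\F[V]^{G_{\beta}}$, and cite \cite{DJ2015} for the rigid reflection group conclusion. The only difference is that you spell out the intermediate step (that a separating set of size $m+n=\dim_{\F}V$ generates a polynomial separating algebra of full dimension) which the paper leaves to the reader when invoking \cite{DJ2015}.
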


\begin{proof}
Since $G_1\times_{\mathcal{M}}G_2$ is a split polynomial gluing and $\F[W_1]^{G_1}=\F[W_2]^{\GL(W_1)}$
is a polynomial algebra, it follows from Theorem \ref{Split_thm} that $\F[V]^{G_{\beta}}$ has a 
homogeneous geometric separating set of size $m+n$.
It then follows from \cite{DJ2015} that  $G_{\beta}$ acts on $V$ as a rigid reflection group.
\end{proof}

\begin{exam} In this example, we consider an alternating form with $n=4$. We will show that $\F_q[W_2]^{\Sp_4(\F_q)}$
has a  homogeneous geometric separating set of size $4$ and therefore, using the above theorem, $\GL_m(\F_q)\times_{\mathcal{M}}\Sp_4(\F_q)$
acts on $V$ as a rigid reflection group. Using the work of
Carlisle and Kropholler \cite[Section 8.3]{Ben1993} we know that
$\F_{q}[W_2]^{\Sp_4(\F_{q})}$ is the hypersurface generated by
$d_{1,4},d_{2,4},\xi_1,\xi_2,\xi_3$
subject to the relation
$$\xi_3^q+d_{1,4}\xi_2^q+d_{2,4}\xi_1^q=\xi_1(\xi_1^{q^2+1}-\xi_2^{q+1}+\xi_1^q\xi_3)^{q-1}.$$ 
Write $\xi_1(\xi_1^{q^2+1}-\xi_2^{q+1}+\xi_1^q\xi_3)^{q-1}=\alpha_0+\xi_1^q(\alpha_1\xi_3+\cdots +\alpha_{q-1}\xi_3^{q-1})$
for $\alpha_i\in\F_q[\xi_1,\xi_2]$ and define $f:=d_{2,4}-(\alpha_1\xi_3+\cdots +\alpha_{q-1}\xi_3^{q-1})$ so that
$\xi_3^q=\alpha_0-d_{1,4}\xi_2^q-f\xi_1^q$. We will show that $\{f,d_{1,4},\xi_1,\xi_2\}$
is a homogeneous geometric separating set for
$\F_q[W_1]^{\Sp_4(\F_q)}$.
Let $\overline{\F}_{q}$ denote the algebraic closure of $\F_{q}$ and define $\overline{W}_2:=W_2\otimes_{\F_{q}}\overline{\F}_{q}$.
Since  $\xi_{3}^{q}$ and $\xi_{3}$ separate the same points in $\overline{W}_2$,
the value of $\xi_{3}$ at any point $w\in\overline{W}_2$ is determined by the values of $\{f,d_{1,4},\xi_1,\xi_2\}$.
It then follows from the definition of $f$ that the values of $\{f,d_{1,4},\xi_1,\xi_2\}$ on $w$ determine the value of
$d_{2,4}$ on $w$. The conclusion then follows from the fact that the generating set is a geometric separating set.
\end{exam}

\section{Parabolic Gluing}\label{para_sec}

Consider a vector space $W$ over $\F$ with a flag 
$$\mathcal{F}=(F_0(W)=\{0\},F_1(W),F_2(W),\ldots,F_{\ell}(W)=W)$$ so that $F_i(W)$ is a proper subspace of $F_{i+1}(W)$.
%and $F_{\ell}(W)=W$. We assume $F_1(W)\not=\{0\}$ and,
%for convenience, define $F_0(W):=\{0\}$.
Let $P_{\mathcal{F}}$ denote the parabolic subgroup of $\GL(W)$ consisting of invertible linear transformations which stabilise the flag.
%and suppose $G$ is a subgroup of $P_{\mathcal{F}}$.
Define $\mathcal{M}_{\mathcal{F}}$ to be the subset of $\hom_{\F}(W,W)$ consisting of linear transformations consistent with the flag $\mathcal{F}$,
i.e.,
$$\mathcal{M}_{\mathcal{F}}=\{\phi\in\hom_{\F}(W,W)\mid \phi(F_i(W))\subseteq F_i(W) \,{\rm for\ }\, i=1,\ldots,\ell\}.$$
Choose $G_1$ and $G_2$ to be subgroups of $P_{\mathcal{F}}$ and take $W_1=W_2=W$. Then $\mathcal{M}_{\mathcal{F}}$ is a left $\F G_1$/ right $\F G_2$ 
sub-bimodule of  $\hom_{\F}(W,W)$ and we may form the {\it parabolic gluing} of $G_1$ to $G_2$ through $\mathcal{M}_{\mathcal{F}}$, which we
denote by $G_1\times_{\mathcal{F}} G_2$. 
%We note that, in general, $G_1\times_{\mathcal{F}} G_2$ is not a polynomial gluing.

For the rest of this section, we assume $\F=\F_q$ so that $G_1\times_{\mathcal{F}} G_2$ is a finite group.
If we choose a basis for $W$ which is consistent with the flag $\mathcal{F}$ then the matrices representing
$\mathcal{M}_{\mathcal{F}}$ are block upper-triangular with diagonal blocks of size $n_i\times n_i$
where $n_i=\dim(F_i(W)/F_{i-1}(W))$.
Denote $r_k=n_1+\cdots +n_k$.
We continue to denote the basis for $W_1^*$ by $\{y_1,\ldots,y_n\}$
and the basis for $W_2^*$ by $\{x_1,\ldots,x_n\}$ and assume the bases are consistent with the flag.
Taking the dual of the surjection from $W$ to $W/F_i(W)$ gives an inclusion of $(W/F_i(W))^*$ into $W^*$.
Define $\widetilde{U}_i$ to be the image of $(W/F_{i-1}(W))^*$ in $W_2^*$.
Thus  $\widetilde{U}_1={\rm Span}_{\F_q}\{x_1,\ldots,x_n\}$, 
$\widetilde{U}_2={\rm Span}_{\F_q}\{x_{n_1+1},\ldots,x_n\}$,
$\widetilde{U}_3={\rm Span}_{\F_q}\{x_{n_1+n_2+1},\ldots,x_n\}$, and so on.
Define $N_i(t):=\prod_{u\in\widetilde{U_i}}(t+u)$. Then
$$\mathcal{H}=\left\{x_1,\ldots, x_n\right\}\cup\left(\bigcup_{i=1}^{\ell}\left\{N_i(y_j)\mid r_{i-1}<j\le r_i\right\}\right)$$
is a generating set for $\F_q[V]^{\mathcal{M}_{\mathcal{F}}}$ and 
any basis for $V^*$ consistent with the flag is a Nakajima basis for $V$ as an $\F_q\mathcal{M}_{\mathcal{F}}$-module.
Since $G_2$ stabilises the image of  $(W/F_{i-1}(W))^*$ in $W_2^*$, we see that $G_2$ acts trivially on
$$A=\bigotimes_{i=1}^{\ell}\F_q[N_i(y_j)\mid r_{i-1}<j\le r_i]$$
and $\F_q[V]^{\M}=A\otimes \F_q[W_2]$.
%the gluing is split. Therefore 
%$\F_q[V]^{G_1\times_{\mathcal{F}} G_2}=A^{G_1}\otimes \F_q[W_2]^{G_2}$.
However, in general, the algebra isomorphism from $\F_q[W_1]$ to $A$ which takes $y_j$ to 
$N_{\mathcal{M}_{\mathcal{F}}}(y_j)$ is not
  $G_1$-equivariant. For example, if we take $n=2$ and choose the partition $(1,1)$ then 
  $A=\F_q[N_1(y_1),N_2(y_2)]$
where $N_1(y_1)$ has degree $q^2$ and $N_2(y_2)$ has degree $q$.
If $G_1$ contains an element $g$ such that $y_1\cdot g = y_1+y_2$,
  then $$N_1(y_1)\cdot g=N_1(y_1)+N_2(y_2)^q+\left(d_{1,2}(x_1,x_2)+x_2^{q(q-1)}\right)N_2(y_2).$$
 
\begin{rem} \label{equi_rem}The algebra isomorphism 
$$\F_q[W_1/F_i(W_1)]=\F_q[y_{r_i+1},\ldots,y_n]\to\F_q[N_{i+1}(y_{r_i+1})\ldots, N_{i+1}(y_n)]$$
 which takes $y_k$ to $N_{i+1}(y_k)$ is $G_1$-equivariant -- compare with Example \ref{Fqexam}.
\end{rem}

 We will say that a sequence of homogeneous polynomials $f_1,f_2,\ldots,f_n\in \F_q[W_1]$
  is {\it consistent} with the flag
  $\mathcal{F}$ if $f_j\in\F_q[y_{r_i+1},\ldots,y_n]$ whenever $r_i<j\leq r_{i+1}$.
  In this case we write $\widetilde{f_j}$ for the polynomial formed by substituting
  $N_{i+1}(y_k)$ for $y_k$ in $f_j$.

    \begin{thm} \label{poly_para_thm}
    Suppose $\F_q[W_1]^{G_1}=\F_q[f_1,\ldots,f_n]$ where the sequence of homogeneous polynomials
    $f_1,\ldots,f_n$ is consistent with $\mathcal{F}$
    and $\F_q[W_2]^{G_2}=\F_q[h_1,\ldots,h_n]$ for homogeneous polynomials $h_1,\ldots,h_n$.
    Then $\F_q[V]^{G_1\times_{\mathcal{F}}G_2}=\F_q[\widetilde{f_1},\ldots,\widetilde{f_n},h_1,\ldots,h_n]$.
  \end{thm}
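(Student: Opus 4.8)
The plan is to exhibit $\{\widetilde{f_1},\ldots,\widetilde{f_n},h_1,\ldots,h_n\}$ as a homogeneous system of parameters for $\F_q[V]$ consisting of elements of $\F_q[V]^{G_1\times_{\mathcal{F}}G_2}$ whose degrees multiply to $|G_1\times_{\mathcal{F}}G_2|$; the theorem then follows from the criterion that such a family generates $\F_q[V]^{G_1\times_{\mathcal{F}}G_2}$ (and forces it to be polynomial), cf.\ \cite[3.7.5]{DK2002}. Invariance is checked via $\F_q[V]^{G_1\times_{\mathcal{F}}G_2}=(\F_q[V]^{\mathcal{M}_{\mathcal{F}}})^{G_1\times G_2}$: each $h_j$ lies in $\F_q[x_1,\ldots,x_n]=\F_q[W_2]\subseteq\F_q[V]^{\mathcal{M}_{\mathcal{F}}}$, is $G_2$-invariant by hypothesis, and is fixed by $G_1$ because $G_1$ acts trivially on $W_2$; and for $\widetilde{f_j}$ with $r_i<j\le r_{i+1}$, each factor $N_{i+1}(y_k)$ (with $k>r_i$) occurring in it is fixed by $\mathcal{M}_{\mathcal{F}}$ and by $G_2$, since both of these groups preserve the flag and hence permute the linear forms $y_k+u$, $u\in\widetilde{U}_{i+1}$, while by Remark~\ref{equi_rem} the substitution $y_k\mapsto N_{i+1}(y_k)$ is a $G_1$-equivariant algebra isomorphism on $\F_q[W_1/F_i(W_1)]=\F_q[y_{r_i+1},\ldots,y_n]$, which carries the $G_1$-invariant $f_j$ (lying in this subring by consistency) to the $G_1$-invariant $\widetilde{f_j}$.

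The main step is to show that, over the algebraic closure $\overline{\F}_q$, the common zero locus of $\widetilde{f_1},\ldots,\widetilde{f_n},h_1,\ldots,h_n$ is $\{0\}$. At a common zero the $W_2$-coordinates vanish, because $h_1,\ldots,h_n$ generate the polynomial ring $\F_q[W_2]^{G_2}$, over which $\F_q[W_2]$ is finite, so they form a homogeneous system of parameters for $\F_q[W_2]$. Setting the $x$-coordinates to $0$ turns $N_{i+1}(y_k)$ into $y_k^{q^{n-r_i}}$, so for $j$ in block $i+1$ the restriction of $\widetilde{f_j}$ becomes $f_j$ evaluated at $\bigl(a_k^{q^{n-r_i}}\bigr)_{k>r_i}$, where $(a_1,\ldots,a_n)$ are the $W_1$-coordinates. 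One then runs a downward induction on $m=\ell,\ell-1,\ldots,1$, showing $a_k=0$ for $r_{m-1}<k\le r_m$. Here the key observation is that, since $f_1,\ldots,f_n$ are algebraically independent, the tail $\{f_j:j>r_{m-1}\}$ is a homogeneous system of parameters for $\F_q[y_{r_{m-1}+1},\ldots,y_n]$; reducing it modulo $(y_{r_m+1},\ldots,y_n)$ kills every $f_j$ from a block beyond $m$ (by consistency), leaving the reductions $\overline{f}_j$ of the block-$m$ generators as $n_m$ polynomials whose only common zero in $\F_q[y_{r_{m-1}+1},\ldots,y_{r_m}]$ is the origin. Granting inductively that $a_k=0$ for $k>r_m$, the equations $\widetilde{f_j}(a,0)=0$ for $j$ in block $m$ reduce to $\overline{f}_j\bigl((a_k^{q^{n-r_{m-1}}})_{r_{m-1}<k\le r_m}\bigr)=0$, which forces $a_k=0$ for $r_{m-1}<k\le r_m$. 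Hence $\{\widetilde{f_j},h_j\}$ is a homogeneous system of parameters for $\F_q[V]$, and a fortiori for $\F_q[V]^{G_1\times_{\mathcal{F}}G_2}$.

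Finally, $\deg\widetilde{f_j}=q^{n-r_i}\deg f_j$ for $j$ in block $i+1$; using $\prod_{j}\deg f_j=|G_1|$ (valid since $\F_q[W_1]^{G_1}$ is polynomial) and the identity $\sum_{i=0}^{\ell-1}(n-r_i)n_{i+1}=\sum_{a\le b}n_an_b=\dim\mathcal{M}_{\mathcal{F}}$ gives $\prod_{j}\deg\widetilde{f_j}=|G_1|\cdot|\mathcal{M}_{\mathcal{F}}|$, while $\prod_{j}\deg h_j=|G_2|$, so the product of all $2n$ degrees is $|G_1|\,|\mathcal{M}_{\mathcal{F}}|\,|G_2|=|G_1\times_{\mathcal{F}}G_2|$. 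Applying \cite[3.7.5]{DK2002} (or, directly: $V$ is faithful, so the field of fractions of $\F_q[V]$ has degree $|G_1\times_{\mathcal{F}}G_2|$ over that of $\F_q[V]^{G_1\times_{\mathcal{F}}G_2}$ and also degree $\prod\deg$ over that of the polynomial subring $\F_q[\widetilde{f_1},\ldots,\widetilde{f_n},h_1,\ldots,h_n]$, over which the Cohen--Macaulay ring $\F_q[V]$ is free; since that subring is integrally closed and $\F_q[V]^{G_1\times_{\mathcal{F}}G_2}$ is integral over it with the same fraction field, they coincide) completes the proof. The real content is the inductive homogeneous-system-of-parameters verification; the invariance check and the degree computation are routine.
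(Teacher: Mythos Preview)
Your proof is correct and follows essentially the same strategy as the paper: check invariance, verify that the $2n$ polynomials form a homogeneous system of parameters, and match the product of degrees to the group order via \cite[3.7.5]{DK2002}. The only difference is in the HSOP step: instead of your downward block-induction, the paper observes directly that on the locus $\{x_1=\cdots=x_n=0\}$ one has $N_{i+1}(y_k)=y_k^{q^{n-r_i}}$ and hence $\widetilde{f_j}(a,0)=f_j(a)^{q^{n-r_i}}$ (because $f_j$ has $\F_q$-coefficients, so the Frobenius pulls through), whence the vanishing of all $\widetilde{f_j}$ on $\{x=0\}$ is equivalent to the vanishing of all $f_j$, which forces $a=0$. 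Your inductive argument is valid but unnecessary once this Frobenius observation is made.
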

  
  \begin{proof} First observe that, since $G_1$ stabilises the flag $\mathcal{F}$,
    we have $\{\widetilde{f_1},\ldots,\widetilde{f_n}\}\subset \F[V]^{G_1\times_{\mathcal{F}}G_2}$.
    Furthermore, if $r_i<j\leq r_{i+1}$, then $\deg(\widetilde{f_j})=q^{n-i}\deg(f_j)$.

For an ideal $I\subset\F_q[V]$, let $\mathcal{V}(I)$ denote the variety
  in  $\overline{V}=V\otimes\overline{\F_q}$ cut out by $I$.
    Since  $\F_q[W_1]^{G_1}=\F_q[f_1,\ldots,f_n]$, we know that $\prod_{j=1}^n\deg(f_j)=|G_1|$
    and $\mathcal{V}(\langle f_1,\ldots,f_n\rangle)=\mathcal{V}(\langle y_1,\ldots,y_n\rangle)$
    (see Corrollary 3.2.6 and Lemma 2.6.3 of \cite{CW2011}).
    Similarly, since $\F_q[W_2]^{G_2}=\F_q[h_1,\ldots,h_n]$, we have $\prod_{j=1}^n\deg(h_j)=|G_2|$
    and 
    $\mathcal{V}(\langle h_1,\ldots,h_n\rangle)=\mathcal{V}(\langle x_1,\ldots,x_n\rangle)$.
    Observe that if $w\in\mathcal{V}(\langle x_1,\ldots,x_n\rangle)$ then
    $\widetilde{f_j}(w)=(f_j(w))^{q^{n-i}}$ whenever $r_i<j<r_{i+1}$. Hence
    $$\mathcal{V}(\langle \widetilde{f_1},\ldots,\widetilde{f_n}, h_1,\ldots,h_n\rangle)=
    \mathcal{V}(\langle y_1,\ldots,y_n, x_1,\ldots,x_n\rangle)$$
    and $\{\widetilde{f_1},\ldots,\widetilde{f_n}, h_1,\ldots,h_n\}$ is a homogeneous system of parameters.
    It follows from the construction of $\widetilde{f_j}$ that
    $$\prod_{j=1}^n\deg(\widetilde{f_j})\deg(h_j)=\left(\prod_{j=1}^n\deg(f_j)\deg(h_j)\right)\prod_{i=1}^{\ell}(q^{n-i-1})^{n_i}
    =|G_1|\cdot|G_2|\cdot|\mathcal{M}_{\mathcal{F}}|=|G_1\times_{\mathcal{F}}G_2|.$$
    Therefore $\F_q[V]^{G_1\times_{\mathcal{F}}G_2}=\F_q[\widetilde{f_1},\ldots,\widetilde{f_n},h_1,\ldots,h_n]$.
    \end{proof}

  \begin{exam} \label{poly_para_ex}
    Suppose $G_1=P_{\mathcal{F}}$.  By choosing a basis for $W$ consistent with the flag
    we can think of the elements of $P_{\mathcal{F}}$ as block upper-triangular matrices.
    Let $\widetilde{U}_{\mathcal{F}}$ denote the subgroup consisting of the elements whose
    block-diagonal entries are $1_{n_i}$. Then  $\widetilde{U}_{\mathcal{F}}$ is a $p$-group and a normal subgroup of
    $P_{\mathcal{F}}$.  A generating set for $\F_q[W_1]^{P_{\mathcal{F}}}$ is given by
$$\bigcup_{i=1}^{\ell}\left\{d_{s,n_i}\left(N_{\widetilde{U}_{\mathcal{F}}}\left(y_j\right)\mid r_{i-1}<j\le r_i\right)\mid s=1,\ldots,n_i\right\}$$
      where the Dickson invariants $d_{s,n_i}$ are evaluated on the indicated $n_i$-tuple of orbit products (the resulting polynomial
      is independent of the order). To see this observe that the polynomials are invariant, homogeneous, algebraically independent
      and the product of the degrees is the order of $P_{\mathcal{F}}$. This construction is essentially the result of iterated polynomial gluing --
      compare with \cite{Hew1996}, \cite{KM1986}, and \cite{Mui1975}.
      Since $d_{s,n_i}(N_{\widetilde{U}_{\mathcal{F}}}(y_j))\in \F_q[y_{r_{i-1}+1},\ldots,y_n]$ whenever $r_{i-1}<j\leq r_i$,
      the hypotheses of Theorem~\ref{poly_para_thm} are satisfied whenever $\F_q[W_2]^{G_2}$ is a polynomial algebra.
  \end{exam}

  \begin{thm} \label{para_ff_thm}Suppose $\F_q(W_1)^{G_1}=\F_q(f_1,\ldots,f_n)$
    where $f_1,\ldots,f_n$ is a sequence of homogeneous polynomials consistent with $\mathcal{F}$.
    Then  $\F_q(V)^{G_1\times_{\mathcal{F}}G_2}=\F_q(\widetilde{f_1},\ldots,\widetilde{f_n})\otimes \F_q(W_2)^{G_2}$.
  \end{thm}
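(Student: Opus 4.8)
The plan is to mimic the structure of the proof of Theorem~\ref{poly_para_thm}, but at the level of fields of fractions rather than graded rings, so I do not get to use the clean ``product of degrees equals the group order'' bookkeeping. First I would record, exactly as before, that since $G_1$ stabilises the flag $\mathcal{F}$ each $\widetilde{f_j}$ lies in $\F_q[V]^{G_1\times_{\mathcal{F}}G_2}$, and hence $\F_q(\widetilde{f_1},\ldots,\widetilde{f_n})\otimes\F_q(W_2)^{G_2}\subseteq\F_q(V)^{G_1\times_{\mathcal{F}}G_2}$; note the tensor product here is a field because the first factor lives in $\F_q(W_1)$ and the second in $\F_q(W_2)$, which are linearly disjoint over $\F_q$ inside $\F_q(V)=\F_q(W_1\oplus W_2)$. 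So the content of the theorem is the reverse inclusion, equivalently an equality of transcendence degrees together with the observation that the right-hand side is algebraically closed in $\F_q(V)$ of the appropriate degree.

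The key move is the factorisation $\F_q(V)^{G_1\times_{\mathcal{F}}G_2}=\bigl(\F_q(V)^{\mathcal{M}_{\mathcal{F}}}\bigr)^{G_1\times G_2}$, combined with the explicit description of $\F_q[V]^{\mathcal{M}_{\mathcal{F}}}$ given just before Remark~\ref{equi_rem}: passing to fraction fields, $\F_q(V)^{\mathcal{M}_{\mathcal{F}}}=\F_q\bigl(x_1,\ldots,x_n,\; N_i(y_j)\ (r_{i-1}<j\le r_i,\ 1\le i\le\ell)\bigr)$, and this is an $(G_1\times G_2)$-equivariant setting in which $G_2$ fixes each $N_i(y_j)$. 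Writing $A_{\mathcal{F}}$ for the subalgebra generated by the $N_i(y_j)$, so that $\F_q(V)^{\mathcal{M}_{\mathcal{F}}}=\mathrm{Frac}(A_{\mathcal{F}})\cdot\F_q(W_2)$ with $G_2$ trivial on $\mathrm{Frac}(A_{\mathcal{F}})$ and $G_1$ trivial on $\F_q(W_2)$, Galois descent for the commuting actions gives
$$\F_q(V)^{G_1\times_{\mathcal{F}}G_2}=\mathrm{Frac}(A_{\mathcal{F}})^{G_1}\cdot\F_q(W_2)^{G_2}=\mathrm{Frac}(A_{\mathcal{F}})^{G_1}\otimes\F_q(W_2)^{G_2}.$$
Thus everything reduces to proving $\mathrm{Frac}(A_{\mathcal{F}})^{G_1}=\F_q(\widetilde{f_1},\ldots,\widetilde{f_n})$.

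For that reduction I would argue layer by layer along the flag, using the $G_1$-equivariant isomorphisms of Remark~\ref{equi_rem}. On the ``top'' layer, the map $\F_q[y_{r_{\ell-1}+1},\ldots,y_n]\to\F_q[N_\ell(y_k)\mid k>r_{\ell-1}]$ sending $y_k\mapsto N_\ell(y_k)=N_{\widetilde U_\ell}(y_k)$ is $G_1$-equivariant by Example~\ref{Fqexam} applied to the quotient action on $W_1/F_{\ell-1}(W_1)$; since $f_j\in\F_q[y_{r_{\ell-1}+1},\ldots,y_n]$ for $r_{\ell-1}<j\le r_\ell=n$, this carries the $f_j$ in this range to the corresponding $\widetilde{f_j}$. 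Inductively, having identified the subfield generated by the lower layers with the appropriate $\F_q(\widetilde{f_j})$'s, the next layer's generators $N_i(y_k)$ for $r_{i-1}<k\le r_i$ transform under $G_1$ by an additive cocycle whose values lie in the span of the already-treated $x$'s and lower-layer $N$'s (the displayed formula before Remark~\ref{equi_rem} is the $\ell=2$, partition $(1,1)$ instance), and the $\widetilde{f_j}$ for this range, being obtained by substituting $N_i(y_k)$ into $f_j\in\F_q[y_{r_{i-1}+1},\ldots,y_n]$, generate exactly the $G_1$-invariants one picks up at this stage. Chaining the $\ell$ equalities yields $\mathrm{Frac}(A_{\mathcal{F}})^{G_1}=\F_q(\widetilde{f_1},\ldots,\widetilde{f_n})$, as needed. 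A quick transcendence-degree count ($n$ algebraically independent $\widetilde{f_j}$, matching $\dim W_1$) confirms there is no collapse.

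The main obstacle I anticipate is making the ``layer by layer'' descent genuinely rigorous: the substitution map $\F_q[W_1]\to A_{\mathcal{F}}$, $y_k\mapsto N_i(y_k)$ (with $i$ depending on which flag step $y_k$ sits in), is an algebra isomorphism but is \emph{not} $G_1$-equivariant in general (the remark preceding the theorem is precisely this warning), so one cannot simply transport $\F_q(W_1)^{G_1}$ across it. The honest argument is that the failure of equivariance is ``lower-triangular'' with respect to the flag filtration — the non-equivariance of $N_i(y_k)$ involves only strictly earlier data — so the induced action of $G_1$ on $\mathrm{Frac}(A_{\mathcal{F}})$ is conjugate, \emph{layer by successive layer}, to its action on $\F_q(W_1)$, and each $\widetilde{f_j}$ is the image of $f_j$ under the genuinely equivariant isomorphism of Remark~\ref{equi_rem} restricted to the relevant quotient. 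Nailing this — i.e.\ setting up the induction so that at each step one is applying Example~\ref{Fqexam}/Remark~\ref{equi_rem} to an honest quotient representation and keeping careful track of which variables the lower-layer invariants involve — is where the real work lies; the field-theoretic descent and the Galois-descent factorisation are then formal.
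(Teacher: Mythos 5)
Your ``Galois descent for commuting actions'' step is where the argument breaks down. To write $\bigl(\mathrm{Frac}(A_{\mathcal{F}})\cdot\F_q(W_2)\bigr)^{G_1\times G_2}=\mathrm{Frac}(A_{\mathcal{F}})^{G_1}\cdot\F_q(W_2)^{G_2}$ you need $\mathrm{Frac}(A_{\mathcal{F}})$ to be a $G_1$-stable subfield of $\F_q(V)^{\mathcal{M}_{\mathcal{F}}}$, and it is not: the paper's own example just before Remark~\ref{equi_rem} shows that for $n=2$, partition $(1,1)$, and $y_1\cdot g=y_1+y_2$,
$$N_1(y_1)\cdot g=N_1(y_1)+N_2(y_2)^q+\bigl(d_{1,2}(x_1,x_2)+x_2^{q(q-1)}\bigr)N_2(y_2),$$
whose coefficient $d_{1,2}(x_1,x_2)+x_2^{q(q-1)}$ lies in $\F_q[W_2]$ and not in $A_{\mathcal{F}}$, so $g$ carries $A_{\mathcal{F}}$ (hence $\mathrm{Frac}(A_{\mathcal{F}})$) outside itself. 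Your closing paragraph anticipates a difficulty but mislocates it: the problem is not merely that the substitution map $\F_q[W_1]\to A_{\mathcal{F}}$ fails to intertwine the two $G_1$-actions, but that the error terms are genuinely $\F_q[W_2]$-valued rather than ``lower-layer'' elements of $A_{\mathcal{F}}$, so there is no ``induced action of $G_1$ on $\mathrm{Frac}(A_{\mathcal{F}})$'' for your layer-by-layer conjugation to act upon. The reduction to $\mathrm{Frac}(A_{\mathcal{F}})^{G_1}=\F_q(\widetilde{f_1},\ldots,\widetilde{f_n})$ therefore never gets off the ground.

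The paper's proof sidesteps this entirely by never isolating an $A_{\mathcal{F}}$-factor. After the easy inclusion $\F_q(\widetilde{f_1},\ldots,\widetilde{f_n})\otimes\F_q(W_2)^{G_2}\subseteq\F_q(V)^{G_1\times_{\mathcal{F}}G_2}$, which you both establish the same way, the paper uses Example~\ref{poly_para_ex} to obtain the further containment $\F_q(V)^{P_{\mathcal{F}}\times_{\mathcal{F}}P_{\mathcal{F}}}\subseteq\F_q(\widetilde{f_1},\ldots,\widetilde{f_n})\otimes\F_q(W_2)^{G_2}$, so that by Galois theory the middle field equals $\F_q(V)^H$ for some $H$ with $P_{\mathcal{F}}\times_{\mathcal{F}}P_{\mathcal{F}}\geq H\geq G_1\times_{\mathcal{F}}G_2$. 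It then shows $H\leq G_1\times_{\mathcal{F}}G_2$ elementwise: for $(h_1,\varphi,h_2)\in H$, the genuine per-layer equivariance of Remark~\ref{equi_rem} (valid for the quotient representation $W_1/F_i(W_1)$ and hence for any subgroup of $P_{\mathcal{F}}$) converts $\widetilde{f_j}\cdot(h_1,\varphi,h_2)=\widetilde{f_j}$ into $f_j\cdot h_1=f_j$, giving $h_1\in G_1$; and fixing $\F_q(W_2)^{G_2}$ forces $h_2\in G_2$. That two-sided squeeze, sandwiching the candidate field between the invariant fields of $P_{\mathcal{F}}\times_{\mathcal{F}}P_{\mathcal{F}}$ and of $G_1\times_{\mathcal{F}}G_2$, is the idea your plan is missing; once you have it, the need to understand the $G_1$-action on $\mathrm{Frac}(A_{\mathcal{F}})$ in isolation disappears.
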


  \begin{proof} As in the proof of Theorem \ref{poly_para_thm}, first observe that
    $\{\widetilde{f_1},\ldots,\widetilde{f_n}\}\subset \F_q[V]^{G_1\times_{\mathcal{F}}G_2}$.
    Also observe that the image of $\widetilde{f_j}-f_j^{q^{n-i}}$ lies in the ideal $\langle x_1,\ldots, x_n\rangle$
    whenever $r_i<j\leq r_{i+1}$.
    Therefore $$\F_q(\widetilde{f_1},\ldots,\widetilde{f_n})\otimes \F_q(W_2)^{G_2}\subset\F_q(V)^{G_1\times_{\mathcal{F}}G_2}.$$
    Since $G_1$  and $G_2$ are subgroups of $P_{\mathcal{F}}$, we have
    $\F_q(W_1)^{P_{\mathcal{F}}}\subset\F_q(W_1)^{G_1}=\F_q(f_1,\ldots,f_n)$
      and  $\F_q(W_2)^{P_{\mathcal{F}}}\subset\F_q(W_2)^{G_2}$.
      It then follows from Example \ref{poly_para_ex} that
      $$\F_q(V)^{P_{\mathcal{F}}\times_{\mathcal{F}} P_{\mathcal{F}}}\subset
      \F_q(\widetilde{f_1},\ldots,\widetilde{f_n})\otimes \F_q(W_2)^{G_2} \subset  \F_q(V)^{G_1\times_{\mathcal{F}}G_2}\subset \F_q(V).$$
      Therefore $\F_q(\widetilde{f_1},\ldots,\widetilde{f_n})\otimes \F_q(W_2)^{G_2} \subset \F_q(V)$is a Galois extension with
      Galois group $H$ satisfying 
      $P_{\mathcal{F}}\times_{\mathcal{F}} P_{\mathcal{F}} \ge H\ge G_1\times_{\mathcal{F}}G_2$
      and $\F_q(V)^H=\F_q(\widetilde{f_1},\ldots,\widetilde{f_n})\otimes \F_q(W_2)^{G_2} $. To complete the proof, we need to show that
      $H\le G_1\times_{\mathcal{F}}G_2$. Suppose $(h_1,\varphi,h_2)\in H$ with $h_i\in P_{\mathcal{F}}$ and $\varphi\in\mathcal{M}_{\mathcal{F}}$.
      Since $\widetilde{f_j}\in\F_q(V)^H$, we have $\widetilde{f_j}\cdot (h_1,\varphi,h_2)=\widetilde{f_j}$. It then follows from Remark \ref{equi_rem}
      that $f_j\cdot h_1=f_j$. Since this true for every $j$, by Galois Theory, we have $h_1\in G_1$. 
      For any $f\in\F_q(W_2)^{G_2}\subset \F_q(V)^H$, we have
      $f=f\cdot(h_1,\varphi,h_2)=f\cdot h_2$ and, therefore, by another application of Galois Theory, $h_2\in G_2$. Since $h_1\in G_1$ and
      $h_2\in G_2$, we have $(h_1,\varphi,h_2)\in G_1\times_{\mathcal{F}}G_2$, as required.
  \end{proof}

  \begin{exam} Choose a basis consistent with the filtration $\mathcal{F}$ and let 
  $G_1$ be a subgroup of $\GL(W_1)$ represented by a subgroup of the upper triangular unipotent matrices using this basis.  
  Then $G_1$ is a $p$-group and a subgroup of $P_{\mathcal{F}}$. Using the Campbell-Chuai construction from \cite{CC2007}
  gives a generating set $f_1,f_2,\ldots, f_n$ for the field of fractions $\F_q(W_1)^{G_1}$ which is consistent with $\mathcal{F}$. 
  If $G_2$ is any subgroup of $P_{\mathcal{F}}$ then the hypotheses of Theorem \ref{para_ff_thm} are satisfied and
  $\F_q(V)^{G_1\times_{\mathcal{F}}G_2}=\F_q(\widetilde{f_1},\ldots,\widetilde{f_n})\otimes \F_q(W_2)^{G_2}$.
    \end{exam}

\section{Diagonal Gluing}\label{diag_sec}

Suppose  we have a gluing with $G=G_1=G_2$.  Embed $G$ in $G\times G$ using the diagonal map: $g\mapsto (g,g)$.
Restricting the gluing to the image of the diagonal map gives a subgroup of $G\times_{\mathcal{M}} G$ which we denote by $G_{\mathcal{M}}$ and refer to as 
the {\it diagonal gluing}  of $G$ through $\mathcal{M}$. Note that $\Hom_{\F}(W_2,W_1)$ is a left $\F G$-module with the action given by
$\varphi \mapsto g\cdot\varphi\cdot g^{-1}$ and $\mathcal{M}$ is an $\F G$ submodule. Furthermore, $G_{\mathcal{M}}$ is isomorphic to the semi-direct product $G\ltimes \mathcal{M}$. There is an action of $G_{\mathcal{M}}$ on $V=W_1\oplus W_2$ given by
$(g,\varphi)\cdot(w_1\oplus w_2)=(g\cdot w_1+\varphi(w_2))\oplus(g\cdot w_2)$.
Since $\mathcal{M}$ is a normal subgroup of $G_{\mathcal{M}}$ , we have $\F[V]^{G_{\mathcal{M}}}=(\F[V]^{\mathcal{M}})^G$.
If there is a $G$-equivariant algebra isomorphism $\psi:\F[V]\to\F[V]^{\mathcal{M}}$ then the gluing is {\it polynomial} and
$\psi$ induces an isomorphism from $\F[V]^G$ to $\F[V]^{G_{\mathcal{M}}}$.

\begin{exam} Take $\F=\F_q$, $G=\GL_n(\F_q)$, $W_1=W_2=\F_q^n$, and $\mathcal{M}=\Hom_{\F_q}(W_1,W_2)$.
Then we have a split polynomial gluing, see Example \ref{Fqexam}. In general, computing $\F_q[\F_q^n\oplus\F_q^n]^{GL_n(\F_q)}$
is a difficult problem. However, the field of fractions $\F_q(\F_q^n\oplus\F_q^n)^{\GL_n(\F_q)}$ is rational over $\F_q$ and a
generating set is given in Section 3 of \cite{Ste1987}. Applying the gluing isomorphism gives a generating set for 
$\F_q(\F_q^n\oplus\F_q^n)^{\GL_n(\F_q)_{\mathcal{M}}}$ proving that this field is also rational over $\F_q$.
\end{exam}

\begin{exam} Take $\F=\F_q$, $G=\GL_n(\F_q)$, $W_1=\F_q^n$, $W_2=(\F_q^n)^*$, and $\mathcal{M}=\Hom_{\F_q}(W_1,W_2)$.
Then again it follows from Example \ref{Fqexam} that we have a split polynomial gluing.
A generating set for $\F_q[\F_q^n\oplus(\F_q^n)^*]^{\GL_n(\F_q)}$ was computed in \cite{CW2017}; 
the ring is Cohen-Macaulay but not a complete intersection. Applying the gluing isomorphism gives a generating set for
$\F_q[\F_q^n\oplus(\F_q^n)^*]^{\GL_n(\F_q)_{\mathcal{M}}}$. It follows from Propositions \ref{CM-prop} and \ref{CI-prop} that this ring is also Cohen-Macaulay but not a complete intersection.
\end{exam}

\begin{exam} Take $\F=\F_q$, $G=C_p$ (the cyclic group of order $p$ with $q=p^r$), 
$W_1=V_m$, $W_2=V_n$ (indecomposable $\F_qC_p$-modules) and $\mathcal{M}=\Hom_{\F_q}(W_2,W_ 1)$.
It follows from Example \ref{Fqexam} that the gluing is split polynomial.
Generating sets for $\F_q[V_m\oplus V_n]^{C_p}$ are known for $m,n\le 4$, see \cite{Weh2013}.
In each case, applying the gluing isomorphism gives a generating set for $\F_q[V_m\oplus V_n]^{(C_p)_{\mathcal{M}}}$.
By the celebrated formula of Ellinsgrud and Skjelbred (see \cite{ES1980} or  \cite[\S 3.9.2]{DK2002})
the depth of $\F_q[V_m\oplus V_n]^{C_p}$ is $4$ as long as $m+n>3$. Since the the gluing is split, this also holds for
$\F_q[V_m\oplus V_n]^{(C_p)_{\mathcal{M}}}$ (see Theorem \ref{Split_thm}).
\end{exam}

\begin{exam} Take $\F=\F_q$, $G=C_p$ and $W_1=W_2=V_n$. Let ${\bf 1}$ denote the one dimensional submodule of
$\Hom_{\F_q}(V_n,V_n)$ given by scalar multiples of the identity function. The ring $\F_q[V_n\oplus V_n]^{\bf 1}$ is 
polynomial for $n=1$, a hypersurface for $n=2$ and not Cohen-Macaulay for $n>2$. Therefore the gluing in not polynomial for $n>1$.
However, the field of fractions $\F_q(V_n\oplus V_n)^{\bf 1}$ is rational over $\F_q$. Define $u_j:=y_1x_j-y_jx_1$ for $j=2,3,\ldots,n$
and $N:=y_1^q-y_1x_1^{q-1}$. Using the Campbell-Chui construction from \cite{CC2007}, we have
$\F_q[V_n\oplus V_n]^{\bf 1}[x_1^{-1}]=\F_q[x_1,\ldots,x_n,N,u_2,\ldots,u_n][x_1^{-1}]$. 
Let $g$ denote a generator for $C_p$ and choose bases for $W_1$ and $W_2$
so that $x_jg=x_j+x_{j-1}$, $y_jg=y_j+y_{j-1}$ for $j>1$, $x_1g=x_1$ and $y_1g=y_1$. Then
${\rm Span}_{\F_q}\{u_2,\ldots,u_n\}$ is isomorphic as an $\F_qC_p$-module to $V_{n-1}^*$ and $N\in\F_q[V_n\oplus V_n]^{\bf 1}$.
Therefore $\F_q[V_n\oplus V_n]^{(C_p)_{\bf 1}}[x_1^{-1}]$ is isomorphic to $\F_q[V_{n-1}\oplus V_1\oplus V_n]^{C_p}[x_1^{-1}]$
where the isomorphism is induced by the $C_p$-equivariant map taking 
${\rm Span}_{\F_q}\{u_2,\ldots,u_n\}\oplus \F_qN \oplus V_n^*$ to $V_{n-1}^*\oplus V_1^*\oplus V_n^*$.
\end{exam}

%\newpage
\section*{Acknowledgments}
The first author was supported by the Fundamental Research Funds for the Central Universities (2412017FZ001) and NNSF of China (11401087).

%\newpage
%%%%%%%%%%%%%%%%%%%%%%%%%%%%%%%%%%%%%%%%%%%%%%%%REFERENCES

%%%%%%%%%%%%%%%%%%%%%%%%%%%%%%%%%%%%%%%%%%%%%%%%%%%THE END
\end{document}